\newcommand{\C}{\mathbb{C}}
\newcommand{\Z}{\mathbb{Z}}
\newcommand{\z}{\mathbf{z}}
\newcommand{\x}{\mathbf{x}}
\newcommand{\R}{R}
\newcommand{\wt}{wt}
\newcommand{\ddelta}{\boldsymbol{\delta}}
\newcommand{\grey}{\mathord{\ast}}
\newcommand{\white}{\mathord{\circ}}
\newcommand{\black}{\mathord{\bullet}}
\numberwithin{equation}{section}
\newtheorem*{theorem*}{Theorem}
\newtheorem*{corollary*}{Corollary}
\newtheorem{thm}{Theorem}[section]
\newtheorem{theorem}[thm]{Theorem}
\newtheorem{corollary}[thm]{Corollary}
\newtheorem{lemma}[thm]{Lemma}
\newtheorem{proposition}[thm]{Proposition}
\newtheorem{prop}[thm]{Proposition}
\newtheorem{claim}[thm]{Claim}
\crefname{theorem}{Theorem}{Theorems}
\crefname{thm}{Theorem}{Theorems}
\crefname{lemma}{Lemma}{Lemmas}
\crefname{lem}{Lemma}{Lemmas}
\theoremstyle{definition}
\newtheorem{defn}[thm]{Definition}
\newtheorem{definition}[thm]{Definition}
\newtheorem{example}[thm]{Example}
\newtheorem{remark}[thm]{Remark}
\newcommand{\pathSW}[2]{

    \coordinate (xy) at #1;
\foreach \di in #2{
     	\ifnum\di=1 
      	\coordinate (next) at ($(xy)+(0,-1)$) ;
	\fi
	\ifnum\di=-1 
      	\coordinate (next) at ($(xy)+(-1,0)$);
	\fi
	\ifnum\di=0 
      	\coordinate (next) at ($(xy)+(-1,-1)$) ;
	\fi
      \draw (xy)--(next);
      \coordinate (xy) at (next);
      }

}
\title{Rhombic staircase tableaux and Koornwinder polynomials}
\date{\today}
\author{Sylvie Corteel}
\address{CNRS, IRIF,  Université Paris Cité, Paris, France and Department of Mathematics, University of California Berkeley, USA.}
\email{corteel@irif.fr}
\author{Olya Mandelshtam}
\address{Department of Combinatorics and Optimization, University of Waterloo, Ontario, Canada}
\email{omandels@uwaterloo.ca}
\author{Lauren Williams}
\address{Department of Mathematics,
Harvard University, Cambridge, MA}
\email{williams@math.harvard.edu}
\begin{document}

\begin{abstract}
In this article we 
give a combinatorial formula
for a certain class of Koornwinder polynomials, 
also known as Macdonald 
polynomials of type 
$\tilde{C}$.
In particular, we give a combinatorial formula
for the Koornwinder polynomials $K_{\lambda} = 
	K_{\lambda}(z_1,\dots,z_N; a,b,c,d; q,t)$,
where $\lambda = (1,\dots,1,0,\dots,0)$.  We also give
combinatorial formulas for 
all ``open boundary ASEP polynomials''
$F_{\mu}$, where $\mu$ is a composition in 
	$\{-1,0,1\}^N$; 
	these polynomials are related to the nonsymmetric Koornwinder polynomials $E_{\mu}$ up to a triangular change of basis. 
	Our formulas are in terms of \emph{rhombic staircase tableaux},
certain tableaux that we introduced in previous work to give 
a formula for the stationary distribution 
 of the 
 two-species asymmetric 
simple exclusion process (ASEP) on a 
line with open boundaries. 
\end{abstract}

\maketitle
        \setcounter{tocdepth}{1}
        \tableofcontents

\section{Introduction}

In recent years there has been a great deal of activity 
connecting the asymmetric simple exclusion process (ASEP), 
special functions, and combinatorics, resulting in new combinatorial 
formulas for various special functions.
For example, Cantini, de Gier, and Wheeler 
\cite{CGW} showed that the specialization of 
the Macdonald polynomial $P_{\lambda}(x_1\dots,x_N; q,t)$
at $x_1= \dots = x_N = 1$ and $q=1$ is the 
partition function for 
the \emph{multispecies ASEP on a ring}.
Subsequently 
 Martin \cite{Martin18}
gave a formula 
for the stationary distribution of the multispecies ASEP on a ring
using some combinatorial objects he called
\emph{multiline queues}.
This led to our discovery of a new formula for Macdonald polynomials
and \emph{ASEP polynomials}, which are related to the nonsymmetric Macdonald polynomials by a triangular change of basis, in terms of multiline queues \cite{CMW18}.

A few years later, Ayyer, Martin, and Mandelshtam \cite{AMM20, AMM22}
proved analogous results, connecting 
the \emph{modified Macdonald polynomials}, 
\emph{queue-inversion tableaux}, 
and the TAZRP (totally asymmetric zero range process). 
In particular, they gave a 
new formula for modified Macdonald polynomials in terms of queue-inversion
tableaux.

\begin{figure}
\begin{center}

\begin{tikzpicture}[scale=0.55]
\def \d {1.5}
\foreach \i\c\t in {0/black/1, 1/gray/2, 2/black/3, 3/gray/4, 4/white/5, 5/white/6, 6/black/7}
{
\node at (\i*\d,-.7) {\tiny \t};}

\node[scale=2] at (1*\d,0) {$\ast$};
\node[scale=2] at (3*\d,0) {$\ast$};
\draw[fill=black] (0*\d,0) circle (0.3cm);
\draw[fill=white] (2*\d,0) circle (0.3cm);
\draw[fill=black] (4*\d,0) circle (0.3cm);
\draw[fill=black] (5*\d,0) circle (0.3cm);
\draw[fill=white] (6*\d,0) circle (0.3cm);

\draw[->] (-1*\d,.5) to [out=60,in=120] (0-.4,.5);
\draw[<-] (-1*\d,-.5) to [out=-60,in=-120] (0-.4,-.5);

\draw[<->] (0+.2,.5) to [out=60,in=120] (1*\d-.2,.5);
\draw[<->] (1*\d+.2,.5) to [out=60,in=120] (2*\d-.2,.5);
\draw[<->] (2*\d+.2,.5) to [out=60,in=120] (3*\d-.2,.5);
\draw[<->] (3*\d+.2,.5) to [out=60,in=120] (4*\d-.2,.5);
\draw[<->] (5*\d+.2,.5) to [out=60,in=120] (6*\d-.2,.5);
\draw[->] (6*\d+.4,.5) to [out=60,in=120] (7*\d,.5);
\draw[<-] (6*\d+.4,-.5) to [out=-60,in=-120] (7*\d,-.5);

\node at (-0.5*\d-.2,1.2) {\tiny${\alpha}$};
\node at (-0.5*\d-.2,-1.2) {\tiny${\gamma}$};

\node at (0.5*\d,1.2) {\tiny${t}$};
\node at (1.5*\d,1.2) {\tiny${t}$};
\node at (2.5*\d,1.2) {\tiny${1}$};
\node at (3.5*\d,1.2) {\tiny${1}$};
\node at (5.5*\d,1.2) {\tiny${t}$};
\node at (6.7*\d,1.2) {\tiny${\beta}$};
\node at (6.7*\d,-1.2) {\tiny${\delta}$};
\end{tikzpicture}\hspace{1cm}
\begin{tikzpicture}[scale=0.5]
\pathSW{(0,0)}{{1,-1,0,1,-1,0,1,-1,1,-1,1,-1}};
\pathSW{(0,0)}{{-1,-1,-1,-1,-1,0,0,1,1,1,1,1}};
\pathSW{(-1,0)}{{1}};
\pathSW{(-1,0)}{{0,1,1}};
\pathSW{(-2,0)}{{0,1,1}};
\pathSW{(-2,0)}{{0,0,1,1,1}};
\pathSW{(-3,0)}{{0,0,1,1,1}};
\pathSW{(-4,0)}{{0,0,1,1,1,1}};
\pathSW{(-2,-1)}{{-1,-1,-1,-1}};
\pathSW{(-2,-2)}{{-1,0,-1,-1,-1}};
\pathSW{(-4,-2)}{{-1,-1,-1}};
\pathSW{(-4,-4)}{{-1,-1,-1}};
\pathSW{(-4,-5)}{{-1,-1,-1}};
\pathSW{(-5,-6)}{{-1,-1}};
\node at (-.5,-.5) {$\alpha $};
\node at (-2.5,-2.5) {$\gamma$};
\node at (-3,-.5) {$\gamma $};
\node at (-4.5,-3.5) {$\beta$};
\node at (-4.5,-4.5) {$\delta$};

\node at (-4,-1.5) {$t$};
\node at (-5,-1.5) {$\alpha t$};
\node at (-4,-.5) {$ t^2$};
\node at (-5.5,-5.5) {$\delta$};
\node at (-6.5,-6.5) {$\gamma$};

\node at (-1.5,-1) {$\beta t$};
\node at (-2.5,-1.5) {$t$};
\node at (-3.5,-2) {$t^2$};
\node at (-3.5,-3) {$t$};

\node at (-4.5,-2.5) {$t$};
\node at (-5.5,-2.5) {$t$};
\node at (-6.5,-2.5) {$t$};
\node at (-5.5,-3.5) {$t$};
\node at (-6.5,-3.5) {$t$};
\end{tikzpicture}	
\end{center}
\caption{On the left: the two-species open ASEP state $\mu=(\black,\grey,\white,\grey,\black,\black,\white)$.  On the right: 
a rhombic staircase tableau of type $\mu$.}
\label{fig:2species}
\end{figure}

In this article we describe a parallel story, this time
relating 
certain
Koornwinder polynomials
 $K_{\lambda} = K_{\lambda}(\z; a,b,c,d; q,t)$,
rhombic staircase tableaux, and 
the open boundary ASEP.
 In \cite{Garbali}, 
Cantini, Garbali, de Gier and Wheeler showed that 
the specialization of the 
Koornwinder polynomial $K_{\lambda}$ 
at $z_1 = \dots = z_N=1$ and $q=1$
is the 
partition function for 
the \emph{multispecies open ASEP}
(the multispecies ASEP on a line with open boundaries), 
generalizing a previous result of Cantini \cite{Can15} for the two-species open ASEP.
In \cite{CMW17}, we gave a formula for the stationary distribution
of the two-species open ASEP in terms of some tableaux we called 
\emph{rhombic staircase tableaux}.
Building on the above insights, in this article we give
 a combinatorial formula 
for the Koornwinder polynomials $K_{\lambda} = 
K_{\lambda}(\z; a,b,c,d; q,t)$,
where $\lambda = (1,\dots,1,0,\dots,0)$, by incorporating 
``spectral parameters'' $z_1^{\pm 1},\dots, z_N^{\pm 1}$ into the rhombic 
staircase tableaux.  We also give
combinatorial formulas for 
the ``open boundary ASEP polynomials'' $f_{\mu}(\z; a,b,c,d; q,t)$ associated to compositions
in $\{-1,0,1\}^N$, which are analogously related to the nonsymmetric Koornwinder polynomials $E_{\mu}(\z; a,b,c,d; q,t)$ by a triangular change of basis.
While our results only treat a special case of Koornwinder polynomials,
this special case is already quite nontrivial; 
as we explain below, Koornwinder polynomials represent a substantial
generalization of Macdonald polynomials.

In the remainder of the introduction we provide some background on  
Koornwinder polynomials, the ASEP, and rhombic staircase tableaux, then
state our main result.

\subsection{Koornwinder polynomials.}

In \cite{Mac88}, Macdonald introduced a family of orthogonal symmetric polynomials in the variables $\x=x_1,x_2,\ldots ,x_N$ and parameters $q, t$ indexed by partitions $\lambda$, associated to different Lie types. In type $A$, these are the classical Macdonald polynomials $P_{\lambda} = P_{\lambda}(\x;q,t)$, which have been studied extensively in representation theory, algebraic combinatorics, and mathematical physics. 
In type $\tilde{C}$, the Macdonald polynomials recover 
the \emph{Koornwinder polynomials} $K_{\lambda} = K_{\lambda}(\z;a,b,c,d;q,t)$
(sometimes called \emph{Macdonald-Koornwinder polynomials}),
 a 6-parameter family of symmetric Laurent polynomials introduced by Koornwinder 
in \cite{Koornwinder}, and 
further studied by van Diejen \cite{Diejen95}, Noumi \cite{NoumiHecke}, 
Sahi \cite{Sahi}, and others. 
We note that the Koornwinder polynomials are a multivariate generalization of the 
the well-known Askey--Wilson polynomials, 
which in turn specialize or limit to all other families of classical hypergeometric orthogonal polynomials in one variable \cite{AW}. 
Moreover the Koornwinder polynomials give 
rise to
the Macdonald polynomials associated to any classical root system
via a limit or specialization; in particular,
the usual (type A) Macdonald polynomial $P_{\lambda}$ is the term
of highest degree $|\lambda|$ in the Koornwinder polynomial 
$K_{\lambda}$
\cite{Diejen95}.

To keep our notation compact, we will often 
write $K_{\lambda}$ or $K_{\lambda}(\z;q,t)$ 
for $K_{\lambda}(\z;a,b,c,d;q,t)$.
When the parts of $\lambda$ are bounded by one, we will write  $K_{\lambda}(\z;t)$, since $K_{\lambda}(\z;q,t)$ is independent of $q$.

There are also nonsymmetric versions of Macdonald and Koornwinder 
polynomials.  
The \emph{nonsymmetric Koornwinder polynomials} $E_{\mu}=
E_{\mu}(\z; a,b,c,d; q,t)$, 
which are indexed by compositions $\mu\in \Z^N$, 
were introduced by Sahi in \cite{Sahi} as the joint eigenfunctions of mutually commuting difference operators constructed from generators of the affine Hecke algebra. For a partition $\lambda$, the (symmetric)
Koornwinder polynomial $K_{\lambda}$ can be obtained as a linear combination of the $E_{\mu}$'s, where $\mu$ ranges over all signed permutations of $\lambda$.

Although there has been a wealth of combinatorics developed to study (symmetric and nonsymmetric) 
Macdonald polynomials in type $A$, such as 
the tableaux formulas of Haglund, Haiman, and Loehr \cite{HHL05}, 
multiline queue formulas \cite{CMW18}, and vertex model formulas \cite{BW22}, 
so far there has been much less progress developing 
the combinatorics of Koornwinder polynomials. 
Ram and Yip gave a type-independent formula for Macdonald polynomials in 
terms of (quantum) alcove paths \cite{RamYip11}, which has been specialized to the Koornwinder case by Orr and Shimozono,
see \cite[Theorem 3.13 and Proposition 3.20]{OS18}.
However, these formulas are much less explicit and combinatorial than 
their type $A$ counterparts.

In this paper we aim to leverage the connection between Koornwinder polynomials
and the open boundary ASEP 
\cite{Can15, CW15, Garbali, FV17}
together with the tableaux formula for the two-species open boundary ASEP \cite{CMW17}
to give tableaux formulas for certain Koornwinder polynomials.

\subsection{The two-species open boundary ASEP}
The \emph{asymmetric simple exclusion process (ASEP)} 
is a canonical example of an out-of-equilibrium system of 
interacting particles on a one-dimensional lattice, originally introduced by Spitzer \cite{Spi70}. 
In the ASEP with open boundaries, 
the particles hop left and right on a one-dimensional finite lattice, 
subject to the constraint that there is at most one particle at each site; particles can also
enter and exit the lattice at the left and right endpoints of the lattice,
at rates $\alpha,\beta,\gamma$ and $\delta$.

There is a multispecies version of the open boundary ASEP, in which 
particles have \emph{weights} $\{0, \pm 1, \ldots , \pm m\}$.  For the purpose of this article, it is 
enough to define the two-species open boundary ASEP, shown at the left of 
\Cref{fig:2species}.

\begin{definition}
The \emph{two-species asymmetric simple exclusion process (ASEP) with open boundaries}
(or \emph{two-species open ASEP})
is a model of interacting particles on a one-dimensional lattice
of $N$ sites, 
such that each site is either vacant (represented by $\white$ or ``-1'') or occupied by a first class particle (represented by $\black$ or ``1'') or a second class particle (represented by $\grey$ or ``0''). 
States of the ASEP are given by words $\mu = (\mu_1,\ldots,\mu_N) \in\{\black,\white,\grey\}^N=\{-1,0,1\}^N$. We say that such a word $\mu$ has \emph{length}
$|\mu| = N$.  First class particles may enter and exit at the left and right boundaries, and both types of particles can hop left or right to adjacent (vacant) sites. 
The transitions in this Markov chain are as follows:

\begin{enumerate}
\item $X\black\white Y\rightarrow X\white\black Y$, $X\black\grey Y\rightarrow X\grey\black Y$, and $X\grey\white Y\rightarrow X\white\grey Y$ with probability $\frac{t}{N+1}$,
\item $X\white\black Y\rightarrow X\black\white Y$, $X\grey\black Y\rightarrow X\black\grey Y$, and $X\white\grey Y\rightarrow X\grey\white Y$ with probability $\frac{1}{N+1}$,
\item $\white Z\rightarrow \black Z$ with probability $\frac{\alpha}{N+1}$,
\item $\black Z\rightarrow \white Z$ with probability $\frac{\gamma}{N+1}$,
\item $Z \black\rightarrow Z \white$ with probability $\frac{\beta}{N+1}$,
\item $Z \white\rightarrow Z \black$ with probability $\frac{\delta}{N+1}$,
\end{enumerate}

where $X,Y,Z \in\{\white,\grey,\black\}^*$ such that $|X|+|Y|=N-2$ and $|Z|=N-1$.
\end{definition}

Note that when there is a $\grey$ at a boundary, no particles may enter or exit at that boundary.
In particular, the number of second class particles is conserved.

Building upon earlier work of \cite{Uchiyama08}, 
the first and third author discovered that the 
 partition 
function $Z_{N,r}$ of the two-species open ASEP on a lattice of 
$N$ sites with $r$ second class particles can be described as 
 a \emph{Koornwinder moment} of type $\lambda=((N-r),0^r)$ \cite{CW15}. 
Concurrently, Cantini 
\cite{Can15} 
found that the partition function $Z_{N,r}$
coincides with the specialization of a Koornwinder polynomial $K_{\lambda}(\z;a,b,c,d;q,t)$ of 
type $\lambda=(1^{N-r},0^r)$ when $z_1= \dots = z_N=1$  after a change of variables according to \cite[(9)]{Can15}.
\footnote{We could also add here
the condition $q=1$, but Koornwinder polynomials $K_{(1,\dots,1,0,\dots,0)}$ in fact have no 
dependence on $q$.} 
In subsequent work, we introduced rhombic staircase tableaux in order to describe
the stationary distribution (and in particular the partition function) of the two-species open ASEP
\cite{CMW17}.

\subsection{Rhombic staircase tableaux}

In \cite{CW11}, Corteel and Williams introduced
\emph{staircase tableaux} and used them to give a combinatorial formula for the stationary distribution
of the open boundary ASEP.  
Subsequently, Mandelshtam and Viennot \cite{MV17}
introduced \emph{rhombic alternative tableaux},
and used them to give a combinatorial formula for the stationary distribution of the 
two-species open ASEP, when $\gamma=\delta=0$.
Finally 
in \cite{CMW17}, we introduced \emph{rhombic staircase tableaux}, and 
used them to give a combinatorial formula for the stationary distribution of the 
two-species open ASEP (with all parameters general).  We now define these tableaux.

\begin{defn}
Let $\mu=(\mu_1,\ldots,\mu_N)\in\{\white, \grey, \black\}^N$. 
We define the \emph{rhombic diagram} $\Gamma(\mu)$ of shape $\mu$ to be the
piecewise linear curve 
whose southeast border is obtained by reading $\mu$ from left to right and 
adding a south step followed by a west step for each $\white$ or $\black$, 
	and a southwest step for each $\grey$. We label the squares and rhombi adjacent to the southeast border from top to bottom with $\{1,\ldots,N\}$. Then a square labelled $j$ corresponds to $\mu_j\in\{\white,\black\}$, and a rhombus labeled $j$ corresponds to $\mu_j=\grey$. The northwest border is obtained by adding $N-r$ west steps followed by $r$ southwest steps followed by $N-r$ south steps to the northeast end of the southeast border, where $r$ are the total numbers of $\grey$ particles in $\mu$. We say such $\Gamma(\mu)$ has \emph{size} $(N,r)$. 
\end{defn}
See \Cref{ex:gamma tiling} for an example.

\begin{defn}
We can tile $\Gamma(\mu)$ using three types of tiles:
\begin{itemize}
\item a \emph{square} tile consisting of horizontal and vertical edges,
\item a \emph{horizontal rhombic} tile (or ``horizontal rhombus'' or ``short rhombus'')
	consisting of horizontal and diagonal edges,
\item and a \emph{vertical rhombic} tile (or ``vertical rhombus'' or ``tall rhombus'')
	consisting of vertical and diagonal edges.
\end{itemize} 
\end{defn}

We will always choose a distinguished tiling of $\Gamma(\mu)$, shown in 
\Cref{ex:gamma tiling}.

\begin{figure}
\begin{center}
\begin{tikzpicture}[scale=0.5]
\pathSW{(0,0)}{{1,-1,0,1,-1,0,1,-1,1,-1,1,-1}};
\pathSW{(0,0)}{{-1,-1,-1,-1,-1,0,0,1,1,1,1,1}};
	\end{tikzpicture}
\hspace{1in}
\begin{tikzpicture}[scale=0.5]
\pathSW{(0,0)}{{1,-1,0,1,-1,0,1,-1,1,-1,1,-1}};
\pathSW{(0,0)}{{-1,-1,-1,-1,-1,0,0,1,1,1,1,1}};
\pathSW{(-1,0)}{{1}};
\pathSW{(-1,0)}{{0,1,1}};
\pathSW{(-2,0)}{{0,1,1}};
\pathSW{(-2,0)}{{0,0,1,1,1}};
\pathSW{(-3,0)}{{0,0,1,1,1}};
\pathSW{(-4,0)}{{0,0,1,1,1,1}};
\pathSW{(-2,-1)}{{-1,-1,-1,-1}};
\pathSW{(-2,-2)}{{-1,0,-1,-1,-1}};
\pathSW{(-4,-2)}{{-1,-1,-1}};
\pathSW{(-4,-4)}{{-1,-1,-1}};
\pathSW{(-4,-5)}{{-1,-1,-1}};
\pathSW{(-5,-6)}{{-1,-1}};
\foreach \i in {1,...,7}
{
\node at (1.2-\i,-.2-\i) {\tiny \i};
}
	\end{tikzpicture}
	\end{center}
\caption{
	$\Gamma(\mu)$ and its distinguished tiling 
when 
	 $\mu=(\black,\grey,\white,\grey,\black,\black,\white)\in\{\white, \grey, \black\}^{7}$, with labels assigned to the tiles on the southeast border.}
\label{ex:gamma tiling}
 \end{figure}

\begin{defn}
A \emph{west-strip} is the maximal contiguous strip consisting of squares and vertical rhombic tiles, where all tiles are adjacent along a vertical edge. Likewise, a \emph{north-strip} is the maximal contiguous strip consisting of squares and horizontal rhombi, where all tiles are adjacent along a horizontal edge. The total number of west- and north-strips in a rhombic diagram of size $(N,r)$ is equal to $N-r$. 
The \emph{distinguished tiling} of a rhombic diagram
is the tiling in 
which all north strips consist of 
squares then horizontal rhombi from bottom to top.
\end{defn}
 See \Cref{ex:gamma tiling} for an example.

\begin{defn}
For $\mu\in\{\white,\grey,\black\}^N$, a \emph{rhombic staircase tableau (RST)} of 
	\emph{type} $\mu$ is a filling of the tiles of $\Gamma(\mu)$ with the letters $\alpha,\beta,\gamma,\delta$ which satisfies the following conditions:
\begin{itemize}
\item A square on the southeast border corresponding to $\mu_j=\white$ must contain a $\beta$ or a $\gamma$.
\item A square on the southeast border corresponding to $\mu_j=\black$ must contain an $\alpha$ or a $\delta$.
\item Each tile is either empty or contains one letter.
\item A horizontal rhombus may contain the letters $\alpha$ or $\gamma$.
\item A vertical rhombus may contain the letters $\beta$ or $\delta$.
\item Every tile in the same north-strip and above $\alpha$ or $\gamma$ must be empty.
\item Every tile in the same west-strip and to the left of $\beta$ or $\delta$ must be empty.
\end{itemize}
\end{defn}

\begin{figure}
\begin{center}
\begin{tikzpicture}[scale=0.5]
\pathSW{(0,0)}{{1,-1,0,1,-1,0,1,-1,1,-1,1,-1}};
\pathSW{(0,0)}{{-1,-1,-1,-1,-1,0,0,1,1,1,1,1}};
\pathSW{(-1,0)}{{1}};
\pathSW{(-1,0)}{{0,1,1}};
\pathSW{(-2,0)}{{0,1,1}};
\pathSW{(-2,0)}{{0,0,1,1,1}};
\pathSW{(-3,0)}{{0,0,1,1,1}};
\pathSW{(-4,0)}{{0,0,1,1,1,1}};
\pathSW{(-2,-1)}{{-1,-1,-1,-1}};
\pathSW{(-2,-2)}{{-1,0,-1,-1,-1}};
\pathSW{(-4,-2)}{{-1,-1,-1}};
\pathSW{(-4,-4)}{{-1,-1,-1}};
\pathSW{(-4,-5)}{{-1,-1,-1}};
\pathSW{(-5,-6)}{{-1,-1}};
\node at (-.5,-.5) {$\alpha $};
\node at (-2.5,-2.5) {$\gamma$};
\node at (-3,-.5) {$\gamma $};
\node at (-4.5,-3.5) {$\beta$};
\node at (-4.5,-4.5) {$\delta$};
\node at (-5,-1.5) {$\alpha$};
\node at (-5.5,-5.5) {$\delta$};
\node at (-6.5,-6.5) {$\gamma$};

\node at (-1.5,-1) {$\beta$};
\end{tikzpicture}
\hspace{1in}
\begin{tikzpicture}[scale=0.5]
\pathSW{(0,0)}{{1,-1,0,1,-1,0,1,-1,1,-1,1,-1}};
\pathSW{(0,0)}{{-1,-1,-1,-1,-1,0,0,1,1,1,1,1}};
\pathSW{(-1,0)}{{1}};
\pathSW{(-1,0)}{{0,1,1}};
\pathSW{(-2,0)}{{0,1,1}};
\pathSW{(-2,0)}{{0,0,1,1,1}};
\pathSW{(-3,0)}{{0,0,1,1,1}};
\pathSW{(-4,0)}{{0,0,1,1,1,1}};
\pathSW{(-2,-1)}{{-1,-1,-1,-1}};
\pathSW{(-2,-2)}{{-1,0,-1,-1,-1}};
\pathSW{(-4,-2)}{{-1,-1,-1}};
\pathSW{(-4,-4)}{{-1,-1,-1}};
\pathSW{(-4,-5)}{{-1,-1,-1}};
\pathSW{(-5,-6)}{{-1,-1}};
\node at (-.5,-.5) {$\alpha $};
\node at (-2.5,-2.5) {$\gamma$};
\node at (-3,-.5) {$\gamma $};
\node at (-4.5,-3.5) {$\beta$};
\node at (-4.5,-4.5) {$\delta$};

\node at (-4,-1.5) {$t$};
\node at (-5,-1.5) {$\alpha t$};
\node at (-4,-.5) {$ t^2$};
\node at (-5.5,-5.5) {$\delta$};
\node at (-6.5,-6.5) {$\gamma$};

\node at (-1.5,-1) {$\beta t$};
\node at (-2.5,-1.5) {$t$};
\node at (-3.5,-2) {$t^2$};
\node at (-3.5,-3) {$t$};

\node at (-4.5,-2.5) {$t$};
\node at (-5.5,-2.5) {$t$};
\node at (-6.5,-2.5) {$t$};
\node at (-5.5,-3.5) {$t$};
\node at (-6.5,-3.5) {$t$};
\end{tikzpicture}	
\end{center}
\caption{On the left, we show a tableau of type $\mu=(\black,\grey,\white,\grey,\black,\black,\white)$, and on the right we show the weights in $t$ associated to each tile. The total weight of the tableau is $\alpha^2\beta^2\delta^2\gamma^3t^{14}$. }	
\label{ex:tableau}
 \end{figure}

\begin{defn}
The \emph{weight} $\wt(T)$ of a rhombic staircase tableau $T$ is a monomial in $\alpha,\beta,\gamma,\delta,t$, obtained by scanning each tile and giving it a weight based on the nearest nonempty tile to its right (\emph{resp.}~below) in its west-strip (\emph{resp.}~north-strip), if those exist:
\begin{itemize}
\item Each horizontal rhombus containing $\alpha$ gets the weight $t$,
\item Each vertical rhombus containing $\beta$ gets the weight $t$,
\item Each empty square that sees $\alpha$ or $\gamma$ to its right and $\alpha$ or $\delta$ below gets the weight $t$
\item Each empty square that sees $\beta$ to its right gets the weight $t$ 
\item Each empty vertical rhombus that sees $\beta$ to its right gets the weight $t^2$
\item Each empty vertical rhombus that sees $\alpha$ or $\gamma$ to its right gets the weight $t$
\item Each empty horizontal rhombus that sees $\alpha$ below gets the weight $t^2$
\item Each empty horizontal rhombus that sees $\beta$ or $\delta$ below gets the weight $t$
\end{itemize}
Finally, $\wt(T)$ is the product of all Greek letters in the filling of $T$ times the product of all weights in $t$ assigned to the tiles according to the rules above. See 
\Cref{ex:tableau} for an example of a tableau of weight $\alpha^2\beta^2\delta^2\gamma^3t^{14}$.
\end{defn}

\begin{definition}\label{def:genpoly}
Given a word $\mu=(\mu_1,\dots,\mu_N) \in
\{-1,0,1\}^N=\{\white,\grey,\black\}^N$, 
let $\R(\mu)=\sum_T \wt(T)$, where the sum is over all 
rhombic staircase tableaux of type $\mu$.  
In particular, $R(\mu)$ is a polynomial in $\alpha,\beta, \gamma, \delta$ and $t$.
\end{definition}

\begin{example} \label{ex:tableaux}
The following figure shows all rhombic staircase tableaux of type $\black\black$:
\[
\begin{tikzpicture}[scale=0.6]
\def \w{3};
\foreach \j in {0,...,7}
{
\draw (0+\j*\w,0)--(-1+\j*\w,-0)--(-1+\j*\w,-1)--(0+\j*\w,-1)--(0+\j*\w,0);
\draw (-1+\j*\w,0)--(-2+\j*\w,0)--(-2+\j*\w,-1)--(-1+\j*\w,-1)--(-1+\j*\w,0);
\draw (-1+\j*\w,-2)--(-2+\j*\w,-2)--(-2+\j*\w,-1)--(-1+\j*\w,-1)--(-1+\j*\w,-2);
}
\foreach \j in {0,...,4}
{
\node at (-0.5+\j*\w,-0.5) {$\alpha$};
\node at (-1.5+\j*\w,-1.5) {$\delta$};
}
\node at (-0.5+5*\w,-0.5) {$\alpha$};
\node at (-1.5+7*\w,-1.5) {$\delta$};
\foreach \j in {6,7}
{
\node at (-0.5+\j*\w,-0.5) {$\delta$};
}
\foreach \j in {5,6}
{
\node at (-1.5+\j*\w,-1.5) {$\alpha$};
}

\node at (-1.5,-0.5) {$\alpha$};
\node at (-1.5+\w,-0.5) {$\beta$};
\node at (-1.5+2*\w,-0.5) {$\gamma$};
\node at (-1.5+3*\w,-0.5) {$\delta$};
\node at (-1.5+4*\w,-0.5) {$t$};
\node at (-1.5+5*\w,-0.5) {$t$};

\end{tikzpicture}
\]

	Thus $R(\black \black)=\alpha\delta(1+t+\alpha+\beta+\gamma+\delta)+\alpha^2t+\delta^2.$

To give a second example, the following figure shows all rhombic staircase tableaux of type $\white\grey$:
\[
\begin{tikzpicture}[scale=0.6]
\def \w{3};
\foreach \j in {0,1,2,3}
{
\draw (0+\j*\w,0)--(-1+\j*\w,-0)--(-1+\j*\w,-1)--(0+\j*\w,-1)--(0+\j*\w,0);
\draw (-1+\j*\w,0)--(-1+\j*\w,-1)--(-2+\j*\w,-2)--(-2+\j*\w,-1)--(-1+\j*\w,0);
}
\node at (-0.5,-0.5) {$\beta$};
\node at (-0.5+\w,-0.5) {$\gamma$};
\node at (-0.5+2*\w,-0.5) {$\gamma$};
\node at (-0.5+3*\w,-0.5) {$\gamma$};
\node at (-1.5,-1) {$t^2$};
\node at (-1.5+\w,-1) {$t$};
\node at (-1.5+2*\w,-1) {$\beta t$};
\node at (-1.5+3*\w,-1) {$\delta$};
\end{tikzpicture}
\]
	Thus $R(\white \grey)  
	= \beta t^2 + \gamma t + \beta \gamma t + \gamma \delta$.
\end{example}

The main result of \cite{CMW17} was the following.

\begin{theorem}\cite{CMW17} \label{thm:oldCMW}
Consider the two-species open boundary ASEP on a lattice of $N$ sites, with $r$ second class particles.
Let $Z_{N,r} = \sum_\sigma R(\sigma),$ where the sum is over all words $\sigma \in \{-1,0,1\}^N$
containing exactly $r$ $0$'s.
Then $Z_{N,r}$ is the partition function for the two-species open boundary ASEP, 
and for each $\mu \in \{-1,0,1\}^N=\{\white,\grey,\black\}^N$ containing exactly $r$ $0$'s (or $\grey$'s),
 the steady state probability of being in state $\mu$ 
equals $$\frac{R(\mu)}{Z_{N,r}}.$$ 
\end{theorem}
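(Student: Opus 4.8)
The plan is to show that the vector of tableaux generating functions $(R(\mu))_{\mu}$, with $\mu$ ranging over words in $\{\white,\grey,\black\}^N$ containing exactly $r$ grey letters, spans the one-dimensional stationary subspace of the Markov chain; the assertion about $Z_{N,r}$ then follows immediately, since the partition function is by definition the normalizing constant $\sum_\mu R(\mu)$. Concretely, I would verify that $(R(\mu))_\mu$ lies in the kernel of the generator, i.e.\ that it satisfies the global balance (master) equations
\[
\sum_{\nu} \mathrm{rate}(\nu\to\mu)\,R(\nu) = \Big(\sum_{\nu}\mathrm{rate}(\mu\to\nu)\Big)\,R(\mu)
\]
for every state $\mu$, where the rates are those listed in the definition of the two-species open ASEP. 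Since the grey particles are conserved, it suffices to work within each fixed-$r$ sector.

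The main tool is the \emph{Matrix Product Ansatz}. Following Derrida--Evans--Hakim--Pasquier and its multispecies extensions, I would seek operators $D,E,A$ (for $\black,\white,\grey$ respectively) acting on some vector space, together with a row vector $\langle W|$ and a column vector $|V\rangle$, such that the unnormalized stationary weight of $\mu=(\mu_1,\dots,\mu_N)$ equals the matrix product $\langle W|\,X_{\mu_1}\cdots X_{\mu_N}\,|V\rangle$, where $X_{\black}=D$, $X_{\white}=E$, $X_{\grey}=A$. The ansatz reduces the global balance equations to a finite list of local algebraic relations, one for each bulk and boundary transition: schematically (up to an overall rescaling of the operators) the bulk relations
\[
t\,DE-ED=D+E,\qquad t\,DA-AD=A,\qquad t\,AE-EA=A
\]
encode transitions (1)--(2), and the boundary relations
\[
\langle W|(\alpha E-\gamma D)=\langle W|,\qquad (\beta D-\delta E)|V\rangle=|V\rangle
\]
encode transitions (3)--(6). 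The implication ``relations $\Rightarrow$ stationarity'' is the standard telescoping (``hat'') argument: with these relations, the generator applied to a matrix-product state collapses into a difference of boundary currents that cancels term by term. Here one must also confirm that a grey letter at either boundary contributes no net current, matching the rule that no particle enters or exits past a $\grey$.

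The heart of the proof is to \emph{realize} $R(\mu)$ as such a matrix product. I would build $D,E,A,\langle W|,|V\rangle$ as transfer operators that grow a rhombic staircase tableau one strip at a time, reading $\mu$ from left to right (equivalently, from top to bottom along the southeast border): the rows and columns of the operators are indexed by the combinatorial data recording the profile of the diagonal frontier of the partial tableau (which west- and north-strips are still ``open,'' and where the blocking letters $\alpha,\beta,\gamma,\delta$ have been placed), while each matrix entry records the allowed ways of extending the tableau by the next tile together with the corresponding weight in $t$. By construction the product $\langle W|X_{\mu_1}\cdots X_{\mu_N}|V\rangle$ then sums $\wt(T)$ over all rhombic staircase tableaux $T$ of type $\mu$, i.e.\ equals $R(\mu)$.

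Finally I would verify the algebraic relations directly from this combinatorial description, each relation corresponding to a weight-preserving bijection between the fillings of two adjacent tiles (or a tile and a boundary). For instance $t\,DE-ED=D+E$ is checked by comparing the local configurations of a square-over-square pair against the terms $D+E$, tracking the $t$- and $t^2$-weights dictated by the weighting rules. \textbf{The main obstacle} is precisely this weight bookkeeping in the presence of rhombi: the relations $t\,DA-AD=A$ and $t\,AE-EA=A$ involve the horizontal and vertical rhombic tiles, whose weights ($t$ versus $t^2$, depending on what letter they ``see'' to the right or below) are genuinely more intricate than in the single-species staircase-tableaux case, so the local bijections must be set up carefully and the boundary interaction of a $\grey$ handled as a degenerate case. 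Once every relation is verified, the Matrix Product Ansatz yields that $(R(\mu))_\mu$ is the stationary measure up to normalization, proving both assertions of the theorem.
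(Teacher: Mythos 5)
Your overall strategy---realize $R(\mu)$ as a matrix product and verify local bulk/boundary relations by weight-preserving bijections on the tiles---is exactly the route taken in \cite{CMW17}, which is where this theorem is actually proved (the present paper only cites it, though it records the key relations in \Cref{thm:CMW}). However, the specific relations you write down are false for the tableaux generating functions, and with them the telescoping argument would not reproduce $R(\mu)$. The correct relations carry a size-dependent constant $\lambda_{M}=\alpha\beta t^{M-1}-\gamma\delta$ on the reduction terms: e.g.\ $\beta R(x\black)=\delta R(x\white)+\lambda_{||x||+1}R(x)$ and $tR(x\black\white y)=R(x\white\black y)+\lambda_{||x||+||y||+2}\bigl(R(x\black y)+R(x\white y)\bigr)$. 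A one-letter check already shows the constant is unavoidable: $R(\black)=\alpha+\delta$, $R(\white)=\beta+\gamma$, $R(\emptyset)=1$, so $\beta R(\black)-\delta R(\white)=\alpha\beta-\gamma\delta=\lambda_1 R(\emptyset)\neq R(\emptyset)$. Your parenthetical ``up to an overall rescaling of the operators'' cannot repair this, because $\lambda_M$ varies with the length (and species content) of the word being reduced, while a fixed rescaling of $D,E,A,\langle W|,|V\rangle$ multiplies every relation by the same constant.

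What is needed instead is the \emph{generalized} Matrix Ansatz of Corteel--Williams, extended to two species in \cite{CMW17}: the hat/reduction relations map a word of length $N$ to $\lambda_{N+r}$ times words of length $N-1$, and the proof that such relations imply stationarity is an induction on $N$ rather than the verbatim Derrida--Evans--Hakim--Pasquier telescoping---one must check that all the $\lambda$'s arising from the bulk currents of a fixed-size word coincide (they do, since every reduction of a length-$N$, $r$-grey word produces the same index $N+r$) so the cancellation survives, and that the boundary contributions match, including the degenerate case of a $\grey$ at an endpoint. Once the relations are stated in this corrected form, your plan is sound; note also that \cite{CMW17} bypasses the explicit construction of transfer operators and works directly with the word-level identities on $R(\cdot)$ (as in \Cref{thm:CMW}), proving them by exactly the tile-level bijections you sketch in your last paragraph, so that part of your proposal is more machinery than is required but not incorrect.
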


Using \Cref{thm:oldCMW}, it now follows from the result 
of Cantini \cite{Can15} that $Z_{N,r}$ is the specialization of a Koornwinder polynomial 
at $z_1= \dots = z_N$ and $q=1$.
This gives rise to the natural question: how can we incorporate the ``spectral parameters''
$z_1,\dots, z_N$ into the rhombic staircase tableaux, so as to give a 
formula for the Koornwinder polynomials themselves?

\subsection{Main result}
The above question leads us to our main result, described
in \Cref{thm:main}, which is a formula for 
each ``open boundary ASEP polynomial'' $F_{\mu}(\z;t)$, 
 where $\mu$ is a composition in $\{-1,0,1\}^N$. 
 We note that when $\mu=((-1)^{N-r},0^r)$, $F_{\mu}(\z;t)$ 
 coincides with the nonsymmetric Koornwinder polynomial $E_{\mu}(\z;t)$.
 Additionally the sum over all $\mu\in \{-1,0,1\}^N$ with $r$ 0's 
 is the symmetric Koornwinder polynomial $K_{1^{N-r},0^r}(\z;t)$, 
 so we get combinatorial formulas for these polynomials as well.

Recall from \Cref{def:genpoly}
that 
for $\mu=(\mu_1,\dots,\mu_N) \in
\{-1,0,1\}^N=\{\white,\grey,\black\}^N$, 
 $\R(\mu)$ denotes the generating polynomial  in the variables $\alpha,\beta,\gamma,\delta$ and $t$ for the rhombic staircase tableaux of type $\mu$. 
We also let
\begin{equation}
\tilde{\R}(\mu)=\frac{(t-1)^{N-r}}{\prod_{i=2r}^{N+r-1}(\alpha\beta t^i-\gamma\delta)}\R(\mu),
\end{equation} 
where $r$ is the number of zeroes (or $\grey$) in $\mu$.

Let $\mu$ be a word of length $N:=|\mu|$, and let $[N]=\{1,\ldots,N\}$.
 For subsets of indices $I,T\subseteq[N]$ and $u=\mu\vert_I$ a subword of $\mu$ restricted to the indices $I$, define $u\vert_T:=\mu\vert_{T\cap I}$ to be the maximal subword of $u$ supported on $T$. For example, if $\mu=\mu_1\mu_2\mu_3\mu_4\mu_5$, $I=\{2,4,5\}$, $u=\mu_2\mu_4\mu_5$, and $T=\{1,2,4\}$, then $u\vert_T = \mu_2\mu_4$.

We introduce the following 
 change of variables using the parameters $a,b,c,d$\footnote{This change
 of variables is 
nearly the same as \cite[(9)]{Can15} but is different from  \cite[(24), (25)]{Garbali}). }
\begin{align}
\alpha&=\frac{-ac(1-t)}{(a-1)(c-1)},&\qquad \gamma&=\frac{1-t}{(a-1)(c-1)}\nonumber\\
	\beta&=\frac{-bd(1-t)}{(b-1)(d-1)},&\qquad \delta&=\frac{1-t}{(b-1)(d-1)} \label{eq:change}
\end{align}

Let $W_0 = \langle s_1,\dots, s_N\rangle$ be the finite Weyl group of type $C_N$, as in \Cref{def:affine}.
For $S \subseteq[N]$, we let $\overline{S} = 
[N] \setminus S$. 
\begin{theorem}\label{thm:main}
	Let $\lambda \in \{1,0\}^N$ be a partition, 
and let $\ddelta$ be the signed permutation of $\lambda$
such that $\ddelta_1 \leq \ddelta_2 \leq \dots \leq \ddelta_N \leq 0$.
Choose $\mu=(\mu_1,\dots,\mu_N)\in 
W_0 \cdot \lambda \subset \{-1,0,1\}^N=\{\white,\grey,\black\}^N$ and let
$V=\{i \ \vert \ \mu_i \in \{\pm 1\}\}$.

We define 
\begin{equation}\label{eq:phi}
F_{\mu}(\z;t) := 
\sum_{S \subseteq V} \tilde{\R}(\mu\vert_{\overline{S}}) \cdot 
\prod_{i\in S} (z_i^{\mu_i}-1) = 
\sum_{S \subseteq [N]} \tilde{\R}(\mu\vert_{\overline{S}}) \cdot 
\prod_{i\in S} (z_i^{\mu_i}-1).
	\end{equation}

Then as $\mu$ ranges over $W_0 \cdot \lambda$,
the Laurent polynomials 
$$\{F_{\mu}(\z;t) \ \vert \ \mu\in W_0 \cdot \lambda\}$$
form a \emph{qKZ family} (in the sense of \Cref{def:fmu}).

Moreover, if  we use the change of variables from \eqref{eq:change},

\begin{itemize}
	\item $F_{\ddelta}(\z;t)$

equals the nonsymmetric Koornwinder polynomial $E_{\ddelta}(\z;a,b,c,d; t)$,
and 
\item 
	the symmetric Koornwinder polynomial $K_{\lambda}$
is equal to 
	\begin{equation*}
		K_{\lambda}(\z;a,b,c,d;q, t)=
		\sum_{\mu \in W_0 \cdot \lambda}
		F_{\mu}(\z; t),
	\end{equation*}
	where the sum runs over all distinct signed permutations
	of $\lambda$.
\end{itemize}
\end{theorem}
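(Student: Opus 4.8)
The plan is to establish the three assertions of \Cref{thm:main} in the logical order dictated by the theory of the $q$KZ equations and affine Hecke algebras: first verify that the family $\{F_\mu\}$ satisfies the defining relations of a $q$KZ family, then identify $F_{\ddelta}$ with $E_{\ddelta}$, and finally deduce the symmetric formula by symmetrization.

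\emph{Step 1: the $q$KZ relations.} The defining conditions of a $q$KZ family (\Cref{def:fmu}) are local relations governing the action of the generators $s_1,\dots,s_{N-1}$ and $s_N$ of $W_0$ on adjacent pairs of indices, together with a boundary/affine relation tying together $\mu$ and its shift. The key point is that passing from $\mu$ to $s_i\cdot\mu$ swaps $\mu_i$ and $\mu_{i+1}$ (or flips the sign of $\mu_N$ at the boundary), and the combinatorial content lives entirely in the two affected tiles. My plan is to reduce each relation to a purely combinatorial identity among the polynomials $\R(u)$ for words $u$ of length two or three, obtained by examining how rhombic staircase tableaux of type $\mu$ and of type $s_i\cdot\mu$ differ locally. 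Concretely, I would expand $F_\mu - F_{s_i\cdot\mu}$ (suitably weighted by the Hecke-algebra coefficients) using the sum over $S$ in \eqref{eq:phi}, and show term-by-term that the contributions cancel once one knows the three-letter recurrences relating $\R(\cdots xy \cdots)$ and $\R(\cdots yx\cdots)$. These recurrences should follow from a bijective or near-bijective analysis of how inserting/deleting a letter and swapping two adjacent tiles transforms the distinguished tiling and the $t$-weights; the two-letter base cases are already visible in \Cref{ex:tableaux}.

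\emph{Step 2: identifying the base case $F_{\ddelta} = E_{\ddelta}$.} Since $\ddelta$ is the antidominant signed permutation of $\lambda$ (all entries $\le 0$ and weakly increasing), $\ddelta = ((-1)^{N-r},0^r)$ up to the stated ordering, and the introduction already notes that $F_{\ddelta}$ coincides with the nonsymmetric Koornwinder polynomial in this case. The plan is to pin down $F_{\ddelta}$ as the unique (up to normalization) element of the $q$KZ family in the antidominant chamber, and then invoke the characterization of $E_{\ddelta}$ as the corresponding solution of the $q$KZ/eigenvalue equations under the change of variables \eqref{eq:change}. This requires checking the normalization: that the leading coefficient of $F_{\ddelta}$ matches that of $E_{\ddelta}$, which amounts to computing $\tilde{\R}(\ddelta\vert_{\overline S})$ for the empty subword $S = V$ (giving the top monomial $\prod_{i\in V}(z_i^{-1}-1)$) and comparing against the known normalization of $E_{\ddelta}$.

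\emph{Step 3: symmetrization.} Given Steps 1 and 2, the symmetric statement follows formally: since the $\{F_\mu\}$ form a $q$KZ family and $F_{\ddelta}=E_{\ddelta}$, the sum $\sum_{\mu\in W_0\cdot\lambda} F_\mu$ is $W_0$-invariant (the $q$KZ relations are precisely the compatibility conditions making the total sum symmetric), and it agrees with the Koornwinder symmetrization $K_\lambda = \sum_{\mu} E_\mu$ that expresses $K_\lambda$ as a linear combination of the $E_\mu$ over signed permutations. The independence of $q$ is automatic because $\lambda\in\{0,1\}^N$, as already noted in the excerpt. \textbf{The main obstacle} I anticipate is Step 1, specifically proving the local three-letter recurrences for $\R$ that drive the $q$KZ relations: the rhombic tiling rules and the intricate $t$-weighting (with weights $t$ and $t^2$ depending on what a tile ``sees'' to its right and below) make the bijective comparison of tableaux of types $\mu$ and $s_i\cdot\mu$ delicate, and handling the boundary generator $s_N$ (the sign flip of $\mu_N$, which interacts with the $\alpha,\beta,\gamma,\delta$ boundary fillings) separately from the bulk generators $s_1,\dots,s_{N-1}$ will require its own careful case analysis.
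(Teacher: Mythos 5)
Your overall architecture — verify the qKZ relations, identify $F_{\ddelta}$ with $E_{\ddelta}$ via the eigenvalue characterization plus a normalization check, then symmetrize — is exactly the paper's route (its \Cref{thm:qKZ}, \Cref{prop:eigen} together with \Cref{rem:leadingcoeff}, and \Cref{prop:K}). However, your Step 1 contains a genuine gap in how you characterize the combinatorial input. The identities that drive the qKZ relations are \emph{not} identities among $\R(u)$ for words $u$ of length two or three: they are the Matrix Ansatz relations of \Cref{thm:CMW}, e.g.\ $t\R(x\black\white y)=\R(x\white\black y)+\lambda_{||x||+||y||+2}(\R(x\black y)+\R(x\white y))$, which relate generating polynomials of tableaux for words of \emph{arbitrary} length differing by a local swap or deletion. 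Although the modification to the word is local, the tableaux are global objects and the identity is not a consequence of two-letter base cases; it is the main theorem of the prior paper \cite{CMW17} (Theorem 5.4 there), which the present paper imports rather than reproves. Your plan to derive these by a "bijective or near-bijective analysis" of two adjacent tiles would amount to redoing that entire paper, and the reduction to short words as stated would fail. You correctly flag this as the main obstacle, but the proposed route to it is not viable as written. (Your term-by-term expansion of the sum over $S$ according to whether $i,i+1\in S$, combined with how $\widetilde{T}_i$ acts on $(z_i-1)(z_{i+1}^{-1}-1)G$ and its relatives, is precisely the paper's mechanism once the Matrix Ansatz relations are granted.)

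There is a second, smaller error in Step 3: you assert that $K_\lambda=\sum_\mu E_\mu$ and that $W_0$-invariance of $\sum_\mu F_\mu$ identifies it with the "Koornwinder symmetrization." In general $K_\lambda$ is a linear combination of the $E_\mu$ with nontrivial coefficients, not their plain sum, and $W_0$-invariance alone does not pin down the symmetric polynomial. The correct input is \Cref{prop:K} (from \cite[Lemma 3]{Garbali}): for a qKZ family $\{f_\mu\}$ normalized so that $f_{\ddelta}=E_{\ddelta}$, the \emph{unweighted} sum $\sum_{\mu\in W_0\cdot\lambda} f_\mu$ equals $K_\lambda$. This is a statement about qKZ families, not about the $E_\mu$ themselves, and it is what makes the final step go through. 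With that citation substituted, and with the Matrix Ansatz relations taken as known from \cite{CMW17}, your outline matches the paper's proof.
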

\begin{remark}
We refer to 
the Laurent polynomials $\{F_{\mu}(\z;t)\}$
as \emph{open boundary ASEP polynomials};  each  
$F_{\mu}$ specializes at $z_1=\dots=z_N=1$
to the steady state probability that the two-species
ASEP is in state $\mu$.  
\end{remark}

\begin{remark}\label{rem:leadingcoeff}
It follows from the definition of $F_{\mu}$
that the coefficient of $z^{\mu}$ in $F_{\mu}$ is $1$:
we get this term when $S=V$ because
	$\tilde{R}(\mu)=1$ when $\mu = \grey^r$.
\end{remark}

\begin{remark}
We refer to  the Laurent polynomials $F_{\mu}(z_1,\dots,z_N;t)$
as \emph{open boundary ASEP polynomials} because
they are the open boundary analogue of the \emph{ASEP
	polynomials} (on a ring) which 
were first studied in \cite{CGW, CMW18} (and which were 
subsequently dubbed ``ASEP polynomials'' in \cite{chen-degier-wheeler-2020}).
We note that the Laurent polynomials $F_{\mu}$
are related to the nonsymmetric Koornwinder polynomials
$E_{\mu}$ via a triangular change
of basis, see \Cref{prop:change}.
\end{remark}

\begin{example} For $N=2$, the ASEP polynomials are:
\begin{itemize}
\item $F_{(1,1)}=\widetilde{R}(\black\black)+\widetilde{R}(\black)(z_1+z_2-2)+(z_1-1)(z_2-1)$.
\item $F_{(1,-1)}=\widetilde{R}(\black\white)+\widetilde{R}(\black)(z_2^{-1}-1)+\widetilde{R}(\white)(z_1-1)+(z_1-1)(z_2^{-1}-1)$.
\item $F_{(-1,1)}=\widetilde{R}(\white\black)+\widetilde{R}(\white)(z_2-1)+\widetilde{R}(\black)(z_1^{-1}-1)+(z_1^{-1}-1)(z_2-1)$.
\item $F_{(-1,-1)}=\widetilde{R}(\white\white)+\widetilde{R}(\white)(z_1^{-1}+z_2^{-1}-2)+(z_1^{-1}-1)(z_2^{-1}-1)$.
\item $F_{(1,0)}=\widetilde{R}(\black\grey)+\widetilde{R}(\grey)(z_1-1)$.
\item $F_{(0,1)}=\widetilde{R}(\grey\black)+\widetilde{R}(\grey)(z_2-1)$.
\item $F_{(-1,0)}=\widetilde{R}(\white\grey)+\widetilde{R}(\grey)(z_1^{-1}-1)$.
\item $F_{(0,-1)}=\widetilde{R}(\grey\white)+\widetilde{R}(\grey)(z_2^{-1}-1)$.
\item $F_{(0,0)}=\widetilde{R}(\grey\grey)$.
\end{itemize}
For example, since $\widetilde{R}(\black \black) = \frac{(t-1)^2}{(\alpha\beta t-\gamma\delta)(\alpha\beta -\gamma\delta)}\left(\alpha\delta(1+t+\alpha+\beta+\gamma+\delta)+\alpha^2t+\delta^2\right) $ and $\widetilde{R}(\black)= \frac{t-1}{\alpha\beta -\gamma\delta}(\alpha+\delta)$, we have that 
\begin{multline*}
F_{(1,1)} = \frac{(t-1)^2}{(\alpha\beta t-\gamma\delta)(\alpha\beta -\gamma\delta)}\left(\alpha\delta(1+t+\alpha+\beta+\gamma+\delta)+\alpha^2t+\delta^2\right) \\+ \frac{t-1}{\alpha\beta -\gamma\delta}(\alpha+\delta)(z_1+z_2-2)+(z_1-1)(z_2-1).
\end{multline*}
	We then have that $K_{(1,1)} = F_{(1,1)}+F_{(1,-1)}+F_{(-1,1)}+F_{(-1,-1)}$,
	and $K_{(1,0)} = F_{(1,0)}+F_{(-1,0)} + F_{(0,1)} + F_{(0,-1)}$.
\end{example}

Using \Cref{thm:main}, we 
can also give a simpler combinatorial formula for $K_{\lambda}(\z;q,t)$,
where $\lambda = (1^{N-r},0^r)$.
\begin{theorem}\label{thm:main2}
Let $\lambda=(1^{N-r},0^r)$ and let 
\[
\tilde{Z}_{N,r}=\frac{(t-1)^{N-r}}{\prod_{i=2r}^{N+r-1}(\alpha\beta t^i-\gamma\delta)}Z_{N,r}(\alpha,\beta,\gamma,\delta;t),
\]
 where $Z_{N,r}=Z_{N,r}(\alpha,\beta,\gamma,\delta;t)$ is defined in Theorem \ref{thm:oldCMW} as the generating polynomial for staircase tableaux of size $N$ with $r$ diagonal steps.
Then
	
\begin{equation}
K_{\lambda}(\z;q,t)=\sum_{k=0}^{N-r} \tilde{Z}_{N-k,r}\cdot e_{k}(y_1,\ldots ,y_N),
\end{equation}
where $y_i=z_i+1/z_i-2$ for $1\le i\le N$ and $e_k(y_1,\ldots ,y_N)$ is the  elementary symmetric polynomial.
\label{Ksymm}
\end{theorem}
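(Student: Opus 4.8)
The plan is to derive this formula directly from the symmetric statement already established in \Cref{thm:main}, namely $K_{\lambda}(\z;q,t) = \sum_{\mu \in W_0 \cdot \lambda} F_{\mu}(\z;t)$, by reorganizing the resulting double sum; all the real content lives in \Cref{thm:main}, and what remains is a regrouping together with two elementary identities. First I would record that for $\lambda = (1^{N-r},0^r)$ the orbit $W_0 \cdot \lambda$ is precisely the set of words in $\{-1,0,1\}^N$ with exactly $r$ zeros. Using the second (more symmetric) form of $F_\mu$ in \eqref{eq:phi}, this gives
\[
K_{\lambda}(\z;q,t) = \sum_{\mu \in W_0 \cdot \lambda} \sum_{S \subseteq [N]} \tilde{R}(\mu\vert_{\overline{S}}) \prod_{i \in S}(z_i^{\mu_i} - 1),
\]
where the terms with $\mu_i = 0$ for some $i \in S$ vanish automatically since $z_i^{0}-1 = 0$, so effectively only $S \subseteq V_\mu$ contributes.

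Next I would set up the bijection that drives the regrouping: the data $(\mu, S)$, with $\mu$ having $r$ zeros and $S \subseteq V_\mu$, is equivalent to the data $(S, \epsilon, \nu)$, where $S \subseteq [N]$, the map $\epsilon\colon S \to \{\pm 1\}$ records the nonzero values $\mu_i = \epsilon_i$ at positions in $S$, and $\nu = \mu\vert_{\overline{S}}$ is the length-$(N-|S|)$ subword on the complementary positions, which carries all $r$ zeros. Under this correspondence the summand factors as $\tilde{R}(\nu)\prod_{i\in S}(z_i^{\epsilon_i}-1)$, so grouping by $k = |S|$ yields
\[
K_{\lambda} = \sum_{k=0}^{N-r} \sum_{\substack{S \subseteq [N] \\ |S| = k}} \Bigl( \sum_{\epsilon\colon S \to \{\pm 1\}} \prod_{i \in S}(z_i^{\epsilon_i} - 1) \Bigr) \Bigl( \sum_{\nu} \tilde{R}(\nu) \Bigr),
\]
where the inner $\nu$-sum runs over all length-$(N-k)$ words on $\overline{S}$ with exactly $r$ zeros.

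Then I would evaluate the two inner sums. The sign sum factors over $i \in S$ as $\prod_{i \in S}\bigl((z_i - 1) + (z_i^{-1}-1)\bigr) = \prod_{i \in S}(z_i + z_i^{-1} - 2) = \prod_{i \in S} y_i$, and summing over all $S$ of size $k$ produces exactly $e_k(y_1,\dots,y_N)$. For the word sum I would use that $\tilde{R}(\nu) = \frac{(t-1)^{(N-k)-r}}{\prod_{i=2r}^{N-k+r-1}(\alpha\beta t^i - \gamma\delta)}\,R(\nu)$ carries a prefactor depending only on the length $N-k$ and the number of zeros $r$, hence constant across the fiber; pulling it out and applying $\sum_{\nu} R(\nu) = Z_{N-k,r}$ (the definition of the partition function in \Cref{thm:oldCMW}) gives precisely $\tilde{Z}_{N-k,r}$. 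Combining the two evaluations yields the claimed identity $K_{\lambda} = \sum_{k=0}^{N-r} \tilde{Z}_{N-k,r}\, e_k(y_1,\dots,y_N)$.

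Since the substantive work is all in \Cref{thm:main}, the only care points are bookkeeping: (i) confirming that the prefactor in $\tilde{R}$ genuinely depends only on $(\text{length}, r)$ and not on the arrangement of $\nu$, which is exactly what lets it factor out of the $\nu$-sum; and (ii) verifying that $(\mu, S) \leftrightarrow (S, \epsilon, \nu)$ is a genuine bijection onto the stated index set, in particular that every word $\nu$ with $r$ zeros on $\overline{S}$, together with any sign assignment on $S$, reassembles to a valid $\mu \in W_0 \cdot \lambda$ with $S \subseteq V_\mu$. As a sanity check I would match this against the $N=2$ example in the text: the top-degree-in-$\z$ part of $K_{(1,1)}$ is $\sum (z_1^{\pm}-1)(z_2^{\pm}-1) = y_1 y_2 = \tilde{Z}_{0,0}\, e_2$, the degree-one part collects to $\tilde{Z}_{1,0}\,(y_1+y_2) = \tilde{Z}_{1,0}\, e_1$, and the constant part is $\tilde{R}(\black\black)+\tilde{R}(\black\white)+\tilde{R}(\white\black)+\tilde{R}(\white\white) = \tilde{Z}_{2,0}\, e_0$, in agreement with the formula.
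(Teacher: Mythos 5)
Your proposal is correct and follows essentially the same route as the paper: both derive the formula from \Cref{thm:main} by expanding $K_\lambda=\sum_\mu F_\mu$, regrouping the double sum by $k=|S|$, evaluating the sign sum as $\prod_{i\in S}\bigl((z_i-1)+(z_i^{-1}-1)\bigr)=\prod_{i\in S}y_i$, and recognizing the remaining word sum as $\tilde{Z}_{N-k,r}$. Your explicit attention to the fact that the prefactor in $\tilde{R}$ depends only on the length and the number of zeros is a point the paper leaves implicit, but it is the same argument.
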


We now recall that
 Koornwinder polynomials $K_{\lambda}(\z;q,t)$ factor when $q=1$.
This is a Koornwinder analogue of the corresponding factorization
for Macdonald polynomials when $q=1$,
see 
\eqref{eq:factorization}.   
(In \Cref{sec:factor}
we will sketch a proof of \Cref{cor:Koorn at q=1} which we learned from Eric Rains.)

\begin{prop}[{\cite{Rains}}]\label{cor:Koorn at q=1}
Let $\lambda=(\lambda_1,\lambda_2,\ldots,\lambda_N)$ be any partition. 
Then at $q=1$, we have the following formula for the Koornwinder polynomial $K_{\lambda}$:
\[
	K_{\lambda}(\z;1,t)
	=\prod_{i=1}^{\lambda_1} K_{(1^{\lambda_i'},0^{N-\lambda'_i})}(\z;t) 
	=\prod_{i=1}^{\lambda_1} K_{1^{\lambda_i'}}(\z;t),
\]
where $\lambda'$ is the partition conjugate to $\lambda$. 
\end{prop}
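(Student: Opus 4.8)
The plan is to characterize $K_{\lambda}$ as the unique $W_0$-symmetric Laurent polynomial that is triangular with leading orbit-sum $m_{\lambda}$ in the dominance order and is a simultaneous eigenfunction of the commuting Koornwinder ($q$-difference) operators, and then to verify that the candidate product satisfies this characterization at $q=1$. Write $\Pi_{\lambda}:=\prod_{i=1}^{\lambda_1} K_{(1^{\lambda_i'},0^{N-\lambda_i'})}(\z;t)$. Since each factor is $W_0$-symmetric, so is $\Pi_{\lambda}$; and since the dominant monomial of $K_{(1^h,0^{N-h})}$ is $z_1\cdots z_h$ with coefficient $1$, the dominant monomial of $\Pi_{\lambda}$ is $\z^{\lambda}$. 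Being $W_0$-symmetric with leading monomial $\z^{\lambda}$ forces $\Pi_{\lambda}=m_{\lambda}+\sum_{\mu<\lambda}(\ast)\,m_{\mu}$; this is precisely the triangularity bookkeeping behind the classical type-$A$ identity $P_{\lambda}(\x;1,t)=\prod_i e_{\lambda_i'}(\x)$, to which the present statement restricts on the top-degree component. It then remains to identify $\Pi_{\lambda}$ as the correct eigenfunction.

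\textbf{The degenerate operator.} The key device is the $q\to 1$ degeneration of the second-order Koornwinder operator $D=D(q,t)$, whose eigenvalue on $K_{\nu}(\z;q,t)$ is, up to the normalization fixed by \eqref{eq:change}, an explicit sum $E_{\nu}(q)=\sum_{j=1}^{N}\bigl(\hat a\hat b\hat c\hat d\, q^{\nu_j} t^{2(N-j)}+q^{-\nu_j} t^{-2(N-j)}\bigr)$. Since $K_0=1$ is itself a Koornwinder polynomial, $(D-E_0(q))\cdot 1=0$ identically in $q$, so the renormalized operator $L:=\lim_{q\to1}\frac{D-E_0(q)}{q-1}$ is well defined, is first order in the $z_i$ (only $\frac{T_{q,i}^{\pm1}-1}{q-1}\to \pm z_i\partial_{z_i}$ survive the limit), and annihilates constants; thus $L$ is a derivation. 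Taking the $q\to1$ limit of $D\,K_{\nu}(\z;q,t)=E_{\nu}(q)K_{\nu}(\z;q,t)$ (using that $K_{\nu}$ is regular at $q=1$) shows $K_{\nu}(\z;1,t)$ is an $L$-eigenfunction with eigenvalue $\bar E_{\nu}:=\lim_{q\to1}\frac{E_{\nu}(q)-E_0(q)}{q-1}=\sum_{j}\nu_j w_j$, where $w_j=\hat a\hat b\hat c\hat d\, t^{2(N-j)}-t^{-2(N-j)}$. The crucial point is that $\bar E_{\nu}$ is \emph{additive over the columns of }$\nu$: writing $\nu_j=\#\{i:\nu_i'\ge j\}$ gives $\bar E_{\nu}=\sum_j w_j\,\#\{i:\nu_i'\ge j\}=\sum_i\sum_{j\le \nu_i'} w_j=\sum_i \bar E_{1^{\nu_i'}}$.

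\textbf{Conclusion.} Because $L$ is a derivation, a product of $L$-eigenfunctions is again an $L$-eigenfunction whose eigenvalue is the \emph{sum} of the factors' eigenvalues. Each factor $K_{(1^{\lambda_i'},0^{N-\lambda_i'})}(\z;t)$ is $q$-independent (as noted in the text) and is an $L$-eigenfunction with eigenvalue $\bar E_{1^{\lambda_i'}}$, so $L\,\Pi_{\lambda}=\bigl(\sum_i \bar E_{1^{\lambda_i'}}\bigr)\Pi_{\lambda}=\bar E_{\lambda}\,\Pi_{\lambda}$, exactly the eigenvalue of $K_{\lambda}(\z;1,t)$. For generic $a,b,c,d,t$ the scalars $w_j$ are distinct and generic, so $\bar E$ separates $\lambda$ from every $\mu<\lambda$; hence among $W_0$-symmetric functions of the form $m_{\lambda}+\sum_{\mu<\lambda}(\ast)\,m_{\mu}$ there is a unique $L$-eigenfunction with eigenvalue $\bar E_{\lambda}$. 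As both $\Pi_{\lambda}$ and $K_{\lambda}(\z;1,t)$ qualify, they coincide, and the identity of Laurent polynomials extends from generic to all parameters by continuity. This gives $K_{\lambda}(\z;1,t)=\prod_{i=1}^{\lambda_1}K_{(1^{\lambda_i'},0^{N-\lambda_i'})}(\z;t)$, and the second equality in the statement is merely our abbreviation $K_{1^{\lambda_i'}}(\z;t)=K_{(1^{\lambda_i'},0^{N-\lambda_i'})}(\z;t)$.

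\textbf{Main obstacle.} The delicate part is making the degeneration rigorous rather than the (mechanical) eigenvalue computation and the additivity-over-columns identity. One must confirm that $K_{\nu}(\z;q,t)$ has no pole at $q=1$, so the limit of the eigenvalue equation is valid, and that the \emph{single} second-order operator already separates the eigenvalues $\bar E_{\mu}$ for $\mu\le\lambda$ at generic parameters. If this separation ever fails, the same argument runs with the full commuting family of Koornwinder operators: their normalized $q\to1$ limits are simultaneously-diagonalized derivations whose joint eigenvalues remain additive over columns, which restores uniqueness and closes the proof.
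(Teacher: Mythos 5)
Your argument is correct in outline, but it takes a genuinely different route from the paper's. The paper proves the factorization via orthogonality and the Cauchy identity: it first records that when the \emph{second} Macdonald parameter equals $1$ the Koornwinder density factors into Askey--Wilson densities, so that $K_{\lambda}(\z;q,t=1)$ is proportional to a symmetrized product of Askey--Wilson polynomials; it then uses the Mimachi--Rains Cauchy identity (with $q$ and $t$ swapped in the dual factor) and integration against the density to represent $K_{\lambda}(\x;q,t)$ as an $M$-fold integral, takes $q\to 1$ so that the dual Koornwinder polynomial in the integrand factors, applies Fubini to split the integral into univariate Askey--Wilson coefficient extractions, and identifies each univariate factor with $K_{1^{\lambda_i'}}$. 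Your proof instead degenerates the Koornwinder $q$-difference operator: the renormalized limit $L=\lim_{q\to 1}(D-E_0(q))/(q-1)$ is a first-order operator annihilating constants, hence a derivation, its eigenvalues $\bar E_{\nu}=\sum_j \nu_j w_j$ are additive over the columns of $\nu$, and uniqueness of the monic triangular eigenfunction forces $\Pi_{\lambda}=K_{\lambda}(\z;1,t)$. This is the natural $BC$-analogue of the standard eigenoperator proof of $P_{\lambda}(\x;1,t)=\prod_i e_{\lambda_i'}$, and it is more self-contained (no Cauchy identity, no integral representations); what the paper's route buys is that it sidesteps the two issues you correctly flag --- regularity of $K_{\nu}(\z;q,t)$ at $q=1$ and generic separation of the degenerate eigenvalues --- and it also handles the bookkeeping for $\lambda_1>M$ explicitly via the expansion in Askey--Wilson polynomials. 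For completeness you should note that regularity at $q=1$ follows from the triangular construction of $K_{\nu}$ in the monomial basis (both the off-diagonal entries of $D$ and the eigenvalue differences $E_{\nu}(q)-E_{\mu}(q)$ vanish to first order at $q=1$, and their ratios tend to the corresponding data for $L$), and that separation holds because $\sum_j(\lambda_j-\mu_j)\bigl(abcd\,t^{2N-j-1}-t^{j-1}\bigr)$ is a nonzero rational function of the parameters whenever $\lambda\neq\mu$; with those two points supplied, your argument is a complete and valid alternative proof.
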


\Cref{cor:Koorn at q=1} allows us to 
give a combinatorial formula for 
$K_{\lambda}(\z;1,t)$ for any partition $\lambda$.
\begin{corollary}\label{cor:direct}
The Koornwinder polynomials $K_{\lambda}(\z;q,t)$ at $q=1$ can be written as
\[
K_{\lambda}(\z;1,t)=\sum_{\mu}e_{\mu}(y_1,\ldots ,y_N)\prod_{i=1}^{\lambda_1}\tilde{Z}_{N-\mu_i,N-\lambda'_i},
\]
where $y_i=z_i+1/z_i-2$, and the sum is over compositions $\mu=(\mu_1,\ldots ,\mu_\ell)$ such that $0\le\mu_i\le \lambda'_i$ for $1\le i\le \lambda_1$. 
	
\end{corollary}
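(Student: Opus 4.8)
The plan is to combine the factorization of \Cref{cor:Koorn at q=1} with the single-column formula of \Cref{thm:main2}, and then expand the resulting product. Since \Cref{cor:Koorn at q=1} is assumed, I may start from
\[
K_{\lambda}(\z;1,t)=\prod_{i=1}^{\lambda_1} K_{(1^{\lambda_i'},0^{N-\lambda_i'})}(\z;t),
\]
so the entire content of the corollary lies in rewriting each factor and reassembling the product.

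First I would rewrite each factor using \Cref{thm:main2}. The factor $K_{(1^{\lambda_i'},0^{N-\lambda_i'})}$ is a Koornwinder polynomial of the shape $(1^{N-r},0^r)$ treated in \Cref{thm:main2}, with the number of $1$'s equal to $\lambda_i'$ and hence $r=N-\lambda_i'$. Substituting this into \Cref{thm:main2} gives
\[
K_{(1^{\lambda_i'},0^{N-\lambda_i'})}(\z;t)=\sum_{k=0}^{\lambda_i'} \tilde{Z}_{N-k,\,N-\lambda_i'}\, e_{k}(y_1,\ldots,y_N),
\]
with $y_j=z_j+1/z_j-2$. The one point that requires care here is the index translation $r=N-\lambda_i'$: in \Cref{thm:main2} the second subscript of $\tilde{Z}$ records the number of $0$'s, whereas it is more natural to index the columns of $\lambda$ by their heights $\lambda_i'$, so I would keep careful track of which quantity is ``number of ones'' and which is ``number of zeros.''

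Next I would take the product over $i=1,\ldots,\lambda_1$ and distribute. Expanding the product of $\lambda_1$ finite sums amounts to choosing, for each $i$, an index $\mu_i\in\{0,1,\ldots,\lambda_i'\}$; such a choice is precisely a composition $\mu=(\mu_1,\ldots,\mu_{\lambda_1})$ with $0\le\mu_i\le\lambda_i'$, which is exactly the indexing set in the statement. Collecting the term attached to a fixed $\mu$ yields
\[
\Bigl(\prod_{i=1}^{\lambda_1} e_{\mu_i}(y_1,\ldots,y_N)\Bigr)\prod_{i=1}^{\lambda_1}\tilde{Z}_{N-\mu_i,\,N-\lambda_i'},
\]
and using the standard convention $e_{\mu}=\prod_i e_{\mu_i}$ for a product of elementary symmetric polynomials identifies the first factor with $e_{\mu}(y_1,\ldots,y_N)$. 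Summing over all admissible $\mu$ gives the claimed formula.

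This argument is essentially a formal manipulation, so I expect no serious analytic obstacle; the only place where an error could creep in is the bookkeeping. Concretely, I would double-check that the upper limit $\mu_i\le\lambda_i'$ (coming from the range $0\le k\le N-r=\lambda_i'$ in \Cref{thm:main2}) and the two subscripts of $\tilde{Z}$ are consistent under the conjugation $\lambda\mapsto\lambda'$, and that the number of factors $\lambda_1$ matches the length $\ell$ of the composition $\mu$. Once these conventions are pinned down, the two displayed substitutions together with the distributive expansion complete the proof.
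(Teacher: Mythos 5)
Your proposal is correct and matches the paper's intended argument: the paper derives \Cref{cor:direct} exactly by combining the factorization of \Cref{cor:Koorn at q=1} with the single-column formula of \Cref{thm:main2} (with the same index translation $r=N-\lambda_i'$) and expanding the product of sums. Your bookkeeping of the subscripts of $\tilde{Z}$ and the range $0\le\mu_i\le\lambda_i'$ is accurate.
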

We can consider \Cref{cor:direct} as giving a combinatorial
formula for Koornwinder polynomials in terms of sequences of $\ell$ rhombic staircase tableaux.

The structure of this paper is as follows.
In \Cref{sec:Hecke} we define the affine Hecke algebra of type C, Koornwinder polynomials, and the notion of a qKZ family. 
In 
\Cref{sec:operators} we prove our main result by showing that our polynomials $\{F_{\mu}\}$ form a qKZ family. 
And in \Cref{sec:conclusion} we compare this story to its counterpart 
in type A, and discuss further directions and generalizations.

\bigskip

\noindent{\bf Acknowledgements:~} 
We would like to thank the \emph{Research in Paris} 
program at the Institut Henri Poincar\'e 
and the visiting scholars program at Universit\'e Paris Cit\'e 
which made this collaboration possible. 
We are grateful to Luigi Cantini, Eric Rains, Siddharth Sahi, 
and Ole Warnaar 
for useful discussions. We are also grateful to Colin Defant for finding a typo in our manuscript. 
SC was partially supported by NSF grant DMS-2054482, ANR grants ANR-19-CE48-0011 and ANR-18-CE40-0033. OM was partially supported by NSERC grant RGPIN-2021-02568 and NSF grant DMS-1953891. LW was partially supported by the National Science Foundation under Award No.
DMS-2152991. Any opinions, findings, and conclusions or recommendations expressed in this material are
those of the author(s) and do not necessarily reflect the views of the National Science
Foundation.  

\section{The 
Hecke algebra  and Koornwinder polynomials}
\label{sec:Hecke}

In this section we review the affine Hecke algebra of type C, 
 nonsymmetric
and symmetric Koornwinder polynomials, 
the notion of qKZ family, and we explain the relationship between Koornwinder polynomials and qKZ family. For more background on these topics, see 
  \cite{Lusztig89}, 
 \cite{Koornwinder}, 
\cite{Sahi}, and
 \cite{Noumi}.

\subsection{The affine Hecke algebra of type C}

\begin{definition}\label{def:affine}
	The \emph{affine Weyl group} $\mathcal{W}_N$ of type
$\tilde{C}_N$ is the group generated by the elements
$s_0,s_1,\dots,s_N$ subject to the relations
	\begin{align*}
	s_i^2 &=1 \text{ for all }0 \leq i \leq N\\
		s_i s_j &= s_j s_i \text{ if }|i-j|>1\\
		s_i s_{i+1} s_i &= s_{i+1} s_i s_{i+1} \text{ for } 1\leq i \leq N-2\\
		s_0 s_1 s_0 s_1 &=s_1 s_0 s_1 s_0\\
		s_N s_{N-1} s_N s_{N-1} &=s_{N-1} s_N s_{N-1} s_N.
\end{align*}
	The \emph{finite Weyl group of type $C_N$} 
is $W_0 = \langle s_1,\dots, s_N\rangle$.
\end{definition}

Given a parameter $q$, the affine Weyl group $\mathcal{W}_N$ acts on the space
 $\C[z_1^{\pm 1},\ldots,z_N^{\pm 1}]$ of Laurent polynomials in $N$ 
 variables as follows:
\begin{align*}
s_i f(z_1,\ldots,z_N) &= f(z_1,\ldots,z_{i+1},z_i,\ldots,z_N), \qquad 1 \leq i \leq N-1\\
s_0 f(z_1,\ldots,z_N) &= f(qz_1^{-1},z_2,\ldots,z_N)\\
s_N f(z_1,\ldots,z_N) &= f(z_1,\ldots,z_{N-1},z_N^{-1})
\end{align*}

The affine Hecke algebra $\mathcal{H}_N$ of type $\tilde{C}_N$
is a deformation of the group algebra of $\mathcal{W}_N$, which
depends on three parameters $t_0, t_N$ and $t$.  It is generated
by elements $T_0,T_1,\dots,T_N$ subject to the relations
\begin{align}
	\label{eq:w1}
	(T_i-t_i)(T_i+1)&=0
	\text{ for }0 \leq i \leq N\\
	\label{eq:w2}
	T_i T_j &= T_j T_i \text{ if }|i-j|>1\\
	\label{eq:w3}
	T_i T_{i+1} T_i &=T_{i+1} T_i T_{i+1} \text{ for }1 \leq i \leq N-2\\
	\label{eq:w4}
	T_0 T_1 T_0 T_1 &=T_1 T_0 T_1 T_0\\
	\label{eq:w5}
	T_N T_{N-1} T_N T_{N-1} &= T_{N-1} T_N T_{N-1} T_N
\end{align}
where $t_1=t_2=\dots = t_{N-1}=t$.

For $1\leq i \leq N$, define the  operators 

\begin{equation}
	Y_i = (T_i \dots T_{N-1})(T_N \dots T_0)(T_1^{-1} \dots T_{i-1}^{-1}),
\end{equation}
which form an abelian subalgebra, and hence share a common set of eigenfunctions, which are precisely the nonsymmetric Koornwinder polynomials in the polynomial representation of the Hecke algebra $\mathcal{H}_N$.

The following operators, introduced by Noumi \cite{NoumiHecke}, give a polynomial representation 
of $\mathcal{H}_N$.  These operators also appear as the $T_i$'s in \cite[(73)]{Garbali} and are closely related to the $\hat{T}_i$'s in \cite{Can15} (up to swapping $b$ and $c$).

\begin{definition}
We fix parameters $a, b, c, d, t, q$, as well as positive integer $N$.
The following operators act on 
	the space $\C[z_1^{\pm 1},\dots, z_N^{\pm 1}]$ of 
Laurent polynomials in $N$ variables.
\begin{align}
	\widetilde{T}_0&=-\frac{ac}{q} - \frac{(z_1-a)(z_1-c)}{z_1} \cdot \frac{1-s_0}{z_1-qz_1^{-1}} \\
	\widetilde{T}_i &= t - (tz_i-z_{i+1})\cdot \frac{1-s_i}{z_i-z_{i+1}}\\
		& =s_i +\frac{(t-1)(z_is_i -z_{i+1})}{z_i-z_{i+1}}
		\hspace{1cm} \text{ for }1\leq i<N\\
	\widetilde{T}_N &= -bd+ \frac{(bz_N-1)(dz_N-1)}{z_N}\cdot \frac{1-s_N}{z_N-z_N^{-1}}.
\end{align}
\end{definition}

It is straightforward to check that these operators
satisfy the relations \eqref{eq:w1} to \eqref{eq:w5}.

\subsection{Koornwinder polynomials}

We follow the exposition of \cite[Definition 4.1]{Kasatani} and \cite[Definition 1]{Garbali} for the following characterization of nonsymmetric Koornwinder polynomials.

\begin{definition}\label{def:nonsymm} \cite[Definition 1]{Garbali}
Let $\lambda \in \Z^N$ be a composition, and 
let $\lambda^+$ denote the unique dominant element in 
$W_0 \cdot \lambda$, that is, 
$\lambda_1^+ \geq \lambda_2^+ \geq \dots \geq \lambda_N^+ \geq 0$.
Take the shortest element $w\in W_0$ such that 
$w \cdot \lambda^+ = \lambda$, and denote it by 
	$w^+_{\lambda}$.  Let $\rho = (N-1,N-2,\dots, 1, 0)$,
	and $\rho(\lambda) = w_{\lambda}^+ \cdot \rho$.

Then 
	the nonsymmetric Koornwinder polynomial $E_{\lambda}$
	is the unique polynomial which solves the eigenvalue
	equations
	\begin{equation}\label{eq:yi}
		Y_i E_{\lambda} = y_i(\lambda)E_{\lambda} 
		\text{ for }i=1 \dots N, \text{ where }
	\end{equation}
	\begin{equation}\label{t0}
      t_0 = -acq^{-1}, \hspace{1cm} t_N = -bd, \text{ and }
	\end{equation}
	\begin{equation}\label{eq:yi2}
		y_i(\lambda)=q^{\lambda_i} t^{N-i+\rho(\lambda)_i}
		(t_0 t_N)^{\epsilon_i(\lambda)} \text{ and }
		\epsilon_i(\lambda) = 
		\begin{cases}
			1 \text{ if }\lambda_i \geq 0\\
			0 \text{ if }\lambda_i<0
		\end{cases}
	\end{equation}
	and whose coefficient of the term $z^{\lambda}=
	z_1^{\lambda_1} \dots z_N^{\lambda_N}$ is equal to $1$.
\end{definition}

\begin{definition}\label{def:fmu} 
\cite[Definition 3.1]{Kasatani}
Let $\lambda \in \Z^N$ be a composition. Suppose that for each signed permutation $\mu$ of $\lambda$, we have
a Laurent polynomial $f_{\mu} \in \C[z_1^{\pm 1}, \dots,
z_N^{\pm 1}]$ such that the following relations hold:

	\begin{align}
		\widetilde{T}_0 f_{\mu_1,\dots } &= q^{\mu_1} f_{-\mu_1,\dots } 
		\text{ if }\mu_1<0 \label{qkz1}\\
		\widetilde{T}_0 f_{\mu_1,\dots } &= t_0 f_{\mu_1,\dots } 
		\text{ if }\mu_1=0\label{left0}\\
		\widetilde{T}_i f_{\dots,\mu_i,\mu_{i+1},\dots } &= t 
		 f_{\dots , \mu_i, \mu_{i+1},\dots } 
		\text{ if }\mu_i=\mu_{i+1} \label{qkz2}\\
		\widetilde{T}_i f_{\dots,\mu_i,\mu_{i+1},\dots } &= 
		 f_{\dots , \mu_{i+1}, \mu_{i},\dots } 
		\text{ if }\mu_i>\mu_{i+1} \label{qkz3}\\
		\widetilde{T}_N f_{\dots ,\mu_N } &= t_N f_{\dots , \mu_N}
		\text{ if }\mu_N=0\label{right0}\\
		\widetilde{T}_N f_{\dots ,\mu_N } &=  f_{\dots , -\mu_N}
		\text{ if }\mu_N>0 \label{qkz4}
	\end{align}
	Then we call the polynomials $\{f_{\mu}\}$ a
	\emph{qKZ family}.
\end{definition}

The following result appears in \cite{Garbali}, see
\cite[Lemma 3]{Garbali} and the discussion that follows.
We take \Cref{prop:K} as the definition of the symmetric Koornwinder
polynomial.
\begin{proposition}\label{prop:K}
Let $\lambda=(\lambda_1,\dots,\lambda_N)$ be a partition. 
Suppose that 
we have a qKZ family $\{f_{\mu}\}$ as in  \Cref{def:fmu}, 
and suppose further that for $\delta:=-\lambda$, we have
	that $f_{\delta}$ equals the nonsymmetric Koornwinder
	polynomial $E_{\delta}$.  
Then the symmetric Koornwinder polynomial $K_{\lambda}$
is equal to 
	\begin{equation*}
		K_{\lambda}(z_1,\dots,z_N; q,t)=
		\sum_{\mu \in W_0 \cdot \lambda}
		f_{\mu}(z_1,\dots,z_N; q, t),
	\end{equation*}
	where the sum runs over all distinct signed permutations
	of $\lambda$.
\end{proposition}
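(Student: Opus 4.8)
The plan is to prove that $F := \sum_{\mu \in W_0\cdot\lambda} f_\mu$ is a $W_0$-invariant Laurent polynomial which is a common eigenfunction of the center $Z(\mathcal H_N)$ of the affine Hecke algebra with exactly the central eigenvalues of $E_\delta$, and then to invoke the characterization of $K_\lambda$ as the unique such symmetric eigenfunction. The first two steps are formal manipulations of the qKZ relations; the real content is the final identification.

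First I would prove that $F$ is $W_0$-symmetric. From the quadratic relation $(\widetilde T_i - t_i)(\widetilde T_i + 1)=0$, equivalently $\widetilde T_i^2 = (t_i-1)\widetilde T_i + t_i$, together with \eqref{qkz3} and \eqref{qkz4}, I first extract the missing ``ascent'' cases of the qKZ relations: for $1 \le i < N$ with $\mu_i < \mu_{i+1}$ one has $\widetilde T_i f_{s_i\mu} = f_\mu$, hence $\widetilde T_i f_\mu = \widetilde T_i^2 f_{s_i\mu} = (t-1) f_\mu + t\, f_{s_i\mu}$, and similarly $\widetilde T_N f_\mu = (t_N-1)f_\mu + t_N f_{s_N\mu}$ when $\mu_N < 0$. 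Summing the relevant relations over each pair $\{\mu, s_i\mu\}$ (using \eqref{qkz2} for the diagonal case $\mu_i = \mu_{i+1}$ and \eqref{right0} for $\mu_N=0$), these combine to give $\widetilde T_i F = t\, F$ for $1 \le i < N$ and $\widetilde T_N F = t_N F$. Reading off the explicit formulas for the operators, the identity $\widetilde T_i F = t_i F$ is equivalent to $(1-s_i)F = 0$ (the rational prefactor is nonzero, while $(1-s_i)F$ is a Laurent polynomial), so $F$ is fixed by each of $s_1,\dots,s_N$ and is therefore $W_0$-invariant.

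Next I would show that $F$ is an eigenfunction of $Z(\mathcal H_N) = \C[Y_1^{\pm 1},\dots,Y_N^{\pm 1}]^{W_0}$. Since $\delta = -\lambda$ is antidominant and $W_0 = \langle s_1,\dots,s_N\rangle$ acts transitively on $W_0\cdot\lambda$, invertibility of the $\widetilde T_i$ lets me write each $f_\mu = h_\mu\, f_\delta$ for some $h_\mu$ in the finite Hecke algebra $\langle \widetilde T_1,\dots,\widetilde T_N\rangle$. By hypothesis $f_\delta = E_\delta$, which by \eqref{eq:yi} is a simultaneous eigenfunction of all $Y_i$ and hence of every central $p(Y)$, with eigenvalue $p(y(\delta))$ determined by the tuple $y(\delta)$ of \eqref{eq:yi2}. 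Because central elements commute with all of $\mathcal H_N$, and in particular with each $h_\mu$, every $f_\mu = h_\mu f_\delta$---and therefore the sum $F$---is an eigenfunction of each $p(Y)$ with the same eigenvalue $p(y(\delta))$.

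Finally I would identify $F$ with $K_\lambda$. The symmetric Koornwinder polynomial is the unique $W_0$-invariant common eigenfunction of $Z(\mathcal H_N)$ whose eigenvalue under each central $p(Y)$ equals $p(y(\lambda))$; since $p(Y)$ is $W_0$-invariant, these eigenvalues are constant on the orbit, so $p(y(\delta)) = p(y(\lambda))$ and the eigenvalues of $F$ match those of $K_\lambda$. As the joint spectrum of the center separates distinct orbits, the common eigenspace is one-dimensional, forcing $F = c\,K_\lambda$; I would fix $c=1$ by comparing the coefficient of a single extremal monomial of the orbit---namely $z^{\delta}=z^{-\lambda}$, which enters $F$ only through $f_\delta = E_\delta$ (monic in $z^\delta$)---against the normalization of $K_\lambda$. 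The hard part is precisely this last step: unlike the purely formal symmetry and commutation arguments above, it requires the spectral theory of the center to guarantee simplicity of the joint eigenspace, and it requires isolating an extremal monomial receiving a contribution from exactly one $f_\mu$ in order to pin down the normalization.
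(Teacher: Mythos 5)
The paper itself gives no proof of this proposition: it cites \cite[Lemma 3]{Garbali} and the discussion there, and explicitly \emph{takes the proposition as the definition} of the symmetric Koornwinder polynomial. So your attempt must stand on its own, and its first two steps do. The pairing argument is correct: the quadratic relation $\widetilde{T}_i^2=(t_i-1)\widetilde{T}_i+t_i$ together with \eqref{qkz3} and \eqref{qkz4} yields the ascent relations $\widetilde{T}_i f_{\mu}=(t-1)f_{\mu}+t f_{s_i\mu}$ (and the $t_N$ analogue), and summing over pairs $\{\mu,s_i\mu\}$, using \eqref{qkz2} and \eqref{right0} for the fixed points, gives $\widetilde{T}_iF=tF$ and $\widetilde{T}_NF=t_NF$, which via the explicit Noumi operators forces $s_iF=F$ for $i=1,\dots,N$. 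Likewise, writing $f_{\mu}=h_{\mu}f_{\delta}$ with $h_{\mu}$ in the finite Hecke algebra (the $\widetilde{T}_i$ are invertible since $t_i\neq 0$) and invoking Bernstein's theorem $Z(\mathcal{H}_N)=\C[Y_1^{\pm1},\dots,Y_N^{\pm1}]^{W_0}$ together with \eqref{eq:yi} correctly shows that $F$ is a $W_0$-invariant eigenfunction with central character $p\mapsto p(y(\delta))$. This is essentially the route taken in \cite{Garbali}.

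The genuine gap is in your normalization step. The claim that $z^{\delta}=z^{-\lambda}$ ``enters $F$ only through $f_{\delta}$'' is false as a consequence of triangularity: in the paper's order $\preceq$, the antidominant element $\delta$ is the \emph{minimal} element of the orbit $W_0\cdot\lambda$ (its partial sums are as small as possible), so $\delta\preceq\mu$ for every $\mu$ in the orbit, and the support bound $f_{\mu}=\sum_{\nu\preceq\mu}e_{\mu\nu}(q,t)\,\z^{\nu}$ of \Cref{prop:change} and its corollary permits --- and for the $E_{\mu}$ generically produces --- a nonzero $z^{\delta}$-coefficient in every member of the family. The monomial that is seen by exactly one $f_{\mu}$ is the opposite extreme $z^{\lambda}$: since $\lambda$ is the dominance-maximum of its orbit, $\lambda\preceq\mu$ forces $\mu=\lambda$, so $[z^{\lambda}]F=[z^{\lambda}]f_{\lambda}$, and $K_{\lambda}$ is monic in $z^{\lambda}$. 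But then you still owe the fact that $f_{\lambda}$ is monic in $z^{\lambda}$; for an abstract qKZ family this does not follow from $f_{\delta}=E_{\delta}$ without an argument, e.g.\ propagating leading coefficients up the orbit via the inverted relations $f_{s_i\mu}=t_i^{-1}\bigl(\widetilde{T}_i-t_i+1\bigr)f_{\mu}$, using that Noumi's operators respect the $\preceq$-filtration and send $z^{\mu}$ (with $\mu_i<\mu_{i+1}$) to $z^{s_i\mu}$ plus strictly lower terms with coefficient exactly $1$. (For the paper's concrete family this monicity is \Cref{rem:leadingcoeff}, but the proposition concerns arbitrary qKZ families.) A smaller caveat: the simplicity of the joint eigenspace of the center, on which your identification rests, holds only for generic $a,b,c,d,q,t$ and should be asserted as such, with the general case recovered by continuity of coefficients.
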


The following result will be a key ingredient in the proof of our main result.
\begin{proposition}\label{prop:eigen}

Let $\lambda \in \Z^N$ be a composition, and 
let $\ddelta$ be the signed permutation of $\lambda$
such that $\ddelta_1 \leq \ddelta_2 \leq \dots \leq \ddelta_N \leq 0$.
Suppose that we have a qKZ family
	$\{f_{\mu}\}$ as in \Cref{def:fmu}.
	Then 
	$$Y_i f_{\ddelta} = y_i(\ddelta) f_{\ddelta}$$ for $i=1,\dots,N$, i.e.
	\eqref{eq:yi} holds with $f_{\ddelta}$ in place of $E_{\lambda}$.
	Therefore if the coefficient of the term $z^{\ddelta}$ in $f_{\ddelta}$ is equal to $1$,
	then 
	$f_{\ddelta}$ equals 
	the nonsymmetric Koornwinder polynomial
	$E_{\ddelta}$. 
\end{proposition}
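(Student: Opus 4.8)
The plan is to verify the eigenvalue equations \eqref{eq:yi} directly for $f_{\ddelta}$ and then quote the uniqueness built into \Cref{def:nonsymm}: once we know that $Y_i f_{\ddelta} = y_i(\ddelta) f_{\ddelta}$ for all $i$ (with the spectral values \eqref{eq:yi2} for the composition $\ddelta$) and that the coefficient of $z^{\ddelta}$ in $f_{\ddelta}$ is $1$, \Cref{def:nonsymm} forces $f_{\ddelta}=E_{\ddelta}$. Thus the whole content is the computation of $Y_i f_{\ddelta}$. Since the definition of $Y_i$ involves the factors $\widetilde{T}_1^{-1},\dots,\widetilde{T}_{i-1}^{-1}$, I would first record the inverse forms of \eqref{qkz2}--\eqref{qkz3}: from the quadratic relation \eqref{eq:w1} (with $t_j=t$ for $1\le j\le N-1$) one gets $\widetilde{T}_j^{-1} f_\nu = t^{-1} f_\nu$ when $\nu_j=\nu_{j+1}$, while applying $\widetilde{T}_j^{-1}$ to \eqref{qkz3} (used for $s_j\nu$) gives $\widetilde{T}_j^{-1} f_\nu = f_{s_j\nu}$ when $\nu_j<\nu_{j+1}$.

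Next I would evaluate $Y_i f_{\ddelta}$ by applying the three factors of $Y_i = (\widetilde{T}_i\cdots \widetilde{T}_{N-1})(\widetilde{T}_N\cdots \widetilde{T}_0)(\widetilde{T}_1^{-1}\cdots \widetilde{T}_{i-1}^{-1})$ from right to left, tracking how the word $\mu$ and the accumulated scalar evolve. Because $\ddelta$ is anti-dominant ($\ddelta_1\le\dots\le\ddelta_N\le 0$), at every step exactly one of the qKZ relations (or one of their inverses above) applies, determined by the local comparison of adjacent entries, so each step is unambiguous. The rightmost factor $\widetilde{T}_1^{-1}\cdots \widetilde{T}_{i-1}^{-1}$ touches only positions $\le i$ and slides a copy of the value $\ddelta_i$ to position $1$, contributing $t^{-1}$ for each equal entry it crosses. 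The middle factor then acts as a conveyor belt: $\widetilde{T}_0$ processes position $1$ via \eqref{left0} if that entry is $0$ (producing $t_0$) or via \eqref{qkz1} if it is negative (producing $q^{\ddelta_i}$ and reflecting the entry to a positive value); then $\widetilde{T}_1,\dots,\widetilde{T}_{N-1}$ shuttle that entry to the right end, by strict swaps \eqref{qkz3} across smaller entries and by factors $t$ via \eqref{qkz2} across equal entries; and finally $\widetilde{T}_N$ processes position $N$ via \eqref{right0} if that entry is $0$ (producing $t_N$) or via \eqref{qkz4} if it is positive (reflecting it back to negative, with no scalar). The leftmost factor $\widetilde{T}_i\cdots \widetilde{T}_{N-1}$ then acts on positions $\ge i$ and reabsorbs the shuttled entry, returning the word to $\ddelta$; hence $Y_i f_{\ddelta}=c_i\,f_{\ddelta}$ for an explicit monomial $c_i$ in $q,t,t_0,t_N$.

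Finally I would check that $c_i=y_i(\ddelta)$, which reproduces the dichotomy in \eqref{eq:yi2}: if $\ddelta_i=0$ then $\widetilde{T}_0$ and $\widetilde{T}_N$ act diagonally and contribute $t_0t_N=(t_0t_N)^{\epsilon_i(\ddelta)}$ with $\epsilon_i=1$ and no power of $q$, whereas if $\ddelta_i<0$ then $\widetilde{T}_0$ contributes $q^{\ddelta_i}$ and the reflected entry is flipped by $\widetilde{T}_N$ without producing $t_N$, so $\epsilon_i=0$; in both cases the surviving factors of $t$ must assemble to $t^{\,N-i+\rho(\ddelta)_i}$. Matching this power of $t$ is the main obstacle, because it requires identifying $\rho(\ddelta)=w_{\ddelta}^{+}\cdot\rho$ correctly: since $\ddelta$ has repeated entries, $w_{\ddelta}^{+}$ is the \emph{shortest} coset representative and is not the naive sign-change element but also reorders within blocks of equal values (for $\ddelta=(-1,-1)$, for instance, it is $w_0 s_1$ rather than $w_0$, giving $\rho(\ddelta)=(0,-1)$). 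I would therefore verify, by bookkeeping the inversions crossed during the conveyor shuttle and the $t^{\pm1}$ factors from the outer two factors, that the total $t$-exponent equals $N-i+(w_{\ddelta}^{+}\cdot\rho)_i$, handling blocks of equal entries with care. With $c_i=y_i(\ddelta)$ established for all $i$ and the normalization of the $z^{\ddelta}$-coefficient assumed, \Cref{def:nonsymm} yields $f_{\ddelta}=E_{\ddelta}$.
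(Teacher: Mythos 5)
Your proposal is correct and follows essentially the same route as the paper's proof: apply the three factors of $Y_i$ from right to left, using the qKZ relations (and the inverse relations derived from the quadratic relation) to shuttle the entry $\ddelta_i$ through the word, splitting into the cases $\ddelta_i<0$ and $\ddelta_i=0$, and then matching the accumulated power of $t$ against $N-i+\rho(\ddelta)_i$ via the explicit description of $w_{\ddelta}^{+}$ on blocks of equal entries. The paper simply carries out the $t$-exponent bookkeeping you defer, arriving at $t^{2(k_1+\cdots+k_{j-1})+k_j-2i+1}$ in the negative-entry case and $t^{2N-2i}t_0t_N$ in the zero case.
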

\begin{example}
As an example, we compute  $\tilde{\R}(\white\grey)$. From 
\eqref{eq:phi}, we have that $F_{\white\grey}=(z_1^{-1}-1)\tilde{\R}(\grey) + \tilde{\R}(\white\grey) = (z_1^{-1}-1) + \tilde{\R}(\white\grey)$.
From \Cref{prop:eigen}, $Y_1(F_{\white\grey}) = y_i F_{\white\grey} = q^{-1}F_{\white\grey}$, where $Y_1 = T_1T_2T_1T_0$ for $N=2$. We compute 
\begin{align*}
 T_1T_2T_1T_0(z_1^{-1}-1) &= q^{-1}\left((z_1^{-1}-1) -\frac{(t-1)(\beta t^2+\gamma\delta+\beta\gamma t+\gamma t)}{\gamma\delta} \right) \\
  T_1T_2T_1T_0(\tilde{\R}(\white\grey)) &=\frac{\alpha\beta t^2}{\gamma\delta q} \tilde{\R}(\white\grey)
  \end{align*}
  and thus
  \begin{align*}
T_1T_2T_1T_0(F_{\white\grey}) &=q^{-1}\left((z_1^{-1}-1) -\frac{(t-1)(\beta t^2+\gamma\delta+\beta\gamma t+\gamma t)}{\gamma\delta}+ \frac{\alpha\beta t^2}{\gamma\delta} \tilde{\R}(\white\grey)\right)\\
&=q^{-1}\left((z_1^{-1}-1) + \tilde{\R}(\white\grey) \right)
\end{align*}
Solving for $\tilde{\R}(\white\grey)$, we get
\[
\tilde{\R}(\white\grey) = \frac{(t-1)(\beta t^2+\gamma t+\gamma\beta t+\delta\gamma)}{\alpha\beta t^2-\gamma\delta},
\]
	which matches the combinatorial computation for $\R(\white\grey)$ from \Cref{ex:tableaux}. 
\end{example}

\begin{proof}

We start by writing
$\ddelta=(\lambda_1^{k_1},\ldots,\lambda_r^{k_r},0^{N-k})$,
where $\lambda_1<\cdots<\lambda_r<0$ and $(k_1,\ldots,k_r)$ is a (strong) composition of $k$, 
i.e. $\sum k_i = k$, and $k_i>0$ for all $i$.
So $\ddelta^+=(-\lambda_1^{k_1},\ldots,-\lambda_r^{k_r},0^{N-k})$.
The permutation $w_{\ddelta}^+$ is the signed permutation
which divides $\{1,2,\dots,N\}$ into 
consecutive blocks
of sizes $k_1, k_2,\dots, k_r, N-k$, 
and on the first $r$ blocks,
it reverses then negates the elements, and on the last block,
it acts as the identify.
	(E.g. if $\ddelta=(-3,-3,-1,-1,-1,0,0)$,
then $w_{\ddelta}^+$ is the signed permutation
	mapping $(1,2,3,4,5,6,7)$ to $(-2,-1, -5,-4,-3,6,7)$.)
So $\rho(\ddelta)$ is the signed permutation obtained from
$\rho=(N-1,N-2,\dots,1,0)$ by dividing up the domain $(1,2,\dots,N)$ into consecutive blocks of sizes $k_1, k_2,\dots, k_r, N-k$, and on the first $r$ blocks, it reverses then negates the values of $\rho$. In our example, $\rho = (6,5,4,3,2,1,0)$ and $\rho(\ddelta) = (-5,-6,-2,-3,-4,1,0)$.

Thus we have 
\[
\rho(\ddelta)_i=-(N-1-k_1-\cdots-k_j+(i-k_1-\cdots-k_{j-1}))=-N-i+1+2(k_1+\cdots+k_{j-1})+k_j
\]
for $k_1+\cdots+k_{j-1}<i\leq k_1+\cdots+k_j$, and for $k<i\leq N$, 
	\[\rho(\ddelta)_i = N-i.
	\]

For $1\leq i\leq k$, let $j$ be such that $\lambda_j=\ddelta_i$.
\begin{align}
Y_i f_{(\lambda_1^{k_1},\ldots,\lambda_r^{k_r},0^{N-k})}&=(T_i\cdots T_{N-1})(T_N\cdots T_0)(T_1^{-1}\cdots T_{i-1}^{-1})f_{(\lambda_1^{k_1},\ldots,\lambda_j^{k_j},\ldots,\lambda_r^{k_r},0^{N-k})} \nonumber\\
&=(T_i\cdots T_{N-1})(T_N\cdots T_0)t^{-(i-k_1-\cdots-k_{j-1}-1)}f_{(\lambda_j\lambda_1^{k_1},\ldots,\lambda_j^{k_j-1},\ldots,\lambda_r^{k_r},0^{N-k})} \label{eq:h1}\\
&=t^{-(i-k_1-\cdots-k_{j-1}-1)}(T_i\cdots T_{N-1})(T_N\cdots T_1)q^{\ddelta_i}f_{((-\lambda_j)\lambda_1^{k_1},\ldots,\lambda_j^{k_j-1},\ldots,\lambda_r^{k_r},0^{N-k})} \label{eq:h2}\\
&=q^{\ddelta_i}t^{-(i-k_1-\cdots-k_{j-1}-1)}(T_i\cdots T_{N-1})T_N f_{(\lambda_1^{k_1},\ldots,\lambda_j^{k_j-1},\ldots,\lambda_r^{k_r},0^{N-k},(-\lambda_j))} \label{eq:h3}\\
&=q^{\ddelta_i}t^{-(i-k_1-\cdots-k_{j-1}-1)}(T_i\cdots T_{N-1}) f_{(\lambda_1^{k_1},\ldots,\lambda_j^{k_j-1},\ldots,\lambda_r^{k_r},0^{N-k},\lambda_j)} \label{eq:h4}\\
&=q^{\ddelta_i}t^{-(i-k_1-\cdots-k_{j-1}-1)}t^{k_1+\cdots+k_j-i} f_{(\lambda_1^{k_1},\ldots,\lambda_j^{k_j},\ldots,\lambda_r^{k_r},0^{N-k})} \label{eq:h5}\\
&=q^{\ddelta_i}t^{2(k_1+\cdots+k_{j-1})+k_j-2i+1} f_{\ddelta},\nonumber
\end{align}

where \eqref{eq:h1}, \eqref{eq:h3}, and \eqref{eq:h5} are due to \eqref{qkz2}, \eqref{qkz3}, and the fact that $-\ddelta_i>0\geq\ddelta_{\ell}$ for all $\ell$, \eqref{eq:h2} is due to \eqref{qkz1}, and \eqref{eq:h4} is due to \eqref{qkz4}. 

Since $\rho(\ddelta)_i=-N+1-i+2(k_1+\cdots+k_{j-1})+k_j$, the exponent of $t$ is equal to $N-i+\rho(\ddelta)_i$,
so $Y_i f_{\ddelta} = y_i(\ddelta) f_{\ddelta}$ for $1 \leq i \leq k$.

For $k+1\leq i\leq N$, we have
\begin{align}
Y_i f_{(\lambda_1^{k_1},\ldots,\lambda_r^{k_r},0^{N-k})}&=(T_i\cdots T_{N-1})(T_N\cdots T_0)t^{-(i-1-k)}f_{(0,\lambda_1^{k_1},\ldots,\lambda_r^{k_r},0^{N-k-1})} \label{eq:i1}\\
&=t^{k+1-i}(T_i\cdots T_{N-1})(T_N\cdots T_1)t_0f_{(0,\lambda_1^{k_1},\ldots,\lambda_r^{k_r},0^{N-k-1})} \label{eq:i2}\\
&=t_0t^{k+1-i}(T_i\cdots T_{N-1})T_N t^{N-k-1}f_{(\lambda_1^{k_1},\ldots,\lambda_r^{k_r},0^{N-k})} \label{eq:i3}\\
&=t_0t^{N-i}(T_i\cdots T_{N-1})t_N f_{(\lambda_1^{k_1},\ldots,\lambda_r^{k_r},0^{N-k})} \label{eq:i4}\\
&=t_0t_Nt^{N-i}t^{N-i} f_{\ddelta} \label{eq:i5}\\
\end{align}
where \eqref{eq:i1}, \eqref{eq:i2}, and \eqref{eq:i5} are due to \eqref{qkz2} and \eqref{qkz3}, \eqref{eq:i2} is due to \eqref{left0}, and \eqref{eq:i4} is due to \eqref{right0}. 

For $k+1 \leq i \leq N$, we have
$y_i(\ddelta) = t^{N-i+\rho(\ddelta)_i} (t_0 t_N) = t^{2N-2i} t_0 t_N$.

\end{proof}

\subsection{The monomials in the qKZ family}

We define two partial orders on $\Z^N$.  

\begin{definition}
The \emph{dominance order} on compositions in $\Z^N$ is defined as follows:
$\mu \geq \nu$ if for $1 \leq j \leq N$ we have
$\sum_{i=1}^{j} (\mu_i-\nu_i) \geq 0$.
We define a second order $\preceq$ on compositions as follows.
Let $\mu^+$ be the unique element in $W_0 \cdot \mu$ such that
$\mu^+$ is a partition, i.e. such that $\mu_1 \geq \mu_2 \geq \dots \geq 0$. Then we say that 
$\mu \preceq \nu$ if $\mu^+ < \nu^+$, or 
if $\mu^+=\nu^+$ and $\mu \leq \nu$.
\end{definition}

For example, if $\mu=(-2,0)$, then the compositions $\nu\in \Z^2$
such that $\nu \preceq \mu$ are:
\[(-2,0), (1,1), (1,-1), (-1,1),(-1,-1), (1,0), (-1,0),(0,1),(0,-1), (0,0).\]

It is well-known \cite{Sahi} (see also \cite{Can15}) 
that the non-symmetric Koornwinder polynomials
$E_{\mu}(\z)$ have the form 
$$E_{\mu}(\mathbf{z}) = \mathbf{z}^{\mu} + \sum_{\nu \prec \mu} c_{\nu} \mathbf{z}^{\nu}.$$

Moreover, it follows from the definitions
that the nonsymmetric Koornwinder polynomials
$E_{\mu}$ and any qKZ family $f_{\mu}$ are related via a 
triangular change of basis.

\begin{proposition}\cite[Proposition 1]{Garbali}\label{prop:change}
	Let 
	$\{f_{\mu}\}$ be a qKZ family.  Then 
	the nonsymmetric Koornwinder polynomials $E_{\mu}$ and 
	the qKZ family $f_{\mu}$
	are related via an invertible triangular change of basis:
	$$E_{\mu} = \sum_{\nu \preceq \mu} c_{\mu \nu}(q,t)
	f_{\nu}, \text{ and }f_{\mu} = \sum_{\nu \preceq \mu}
	d_{\mu \nu}(q,t) E_{\nu}$$
	for suitable rational coefficients $c_{\mu \nu}(q,t)$
	and $d_{\mu \nu}(q,t)$.
\end{proposition}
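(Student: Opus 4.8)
The plan is to reduce the statement to a single triangularity fact about the qKZ family. Recall from Sahi's result quoted just above that $E_{\mu} = \z^{\mu} + \sum_{\nu \prec \mu} c_{\nu}\z^{\nu}$, so for each $\mu$ the sets $\{E_{\nu} : \nu \preceq \mu\}$ and $\{\z^{\nu} : \nu \preceq \mu\}$ span the same subspace, with unitriangular transition matrix with respect to $\preceq$. Consequently it suffices to prove the analogous statement for the qKZ family: that each $f_{\mu}$ expands as $f_{\mu} = c_{\mu}\,\z^{\mu} + \sum_{\nu \prec \mu}(\ast)\,\z^{\nu}$ with $c_{\mu} \neq 0$. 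Composing the two triangular transition matrices, and inverting (which preserves triangularity and invertibility over the field of rational functions in $q,t,a,b,c,d$), then yields both displayed change-of-basis formulas with the stated support $\nu \preceq \mu$.

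First I would dispose of the base case. Inside a single orbit $W_0 \cdot \lambda$ the order $\preceq$ restricts to the dominance order, whose minimum is the antidominant element $\ddelta$. By \Cref{prop:eigen}, $f_{\ddelta}$ is a joint eigenfunction of $Y_1,\dots,Y_N$ with the same eigenvalues $y_i(\ddelta)$ as $E_{\ddelta}$. For generic parameters these joint eigenvalues are simple, so the eigenspace is one-dimensional and $f_{\ddelta}$ is a nonzero scalar multiple of $E_{\ddelta}$ (nonzero, since a qKZ family with $f_{\ddelta}=0$ would be identically zero because every $f_{\mu}$ is obtained from $f_{\ddelta}$ by invertible operators). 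Hence $f_{\ddelta}$ is monomial-triangular with nonzero coefficient on $\z^{\ddelta}$, which is the base case.

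For the inductive step I would climb up the dominance order within the orbit. If $\mu \neq \ddelta$, then $\mu$ has either an interior descent $\mu_i > \mu_{i+1}$ or, when $\mu$ is weakly increasing, a positive last entry $\mu_N > 0$. In the first case \eqref{qkz3} gives $f_{s_i\mu} = \widetilde{T}_i f_{\mu}$ and in the second \eqref{qkz4} gives $f_{s_N\mu} = \widetilde{T}_N f_{\mu}$, and in both cases the new index $\nu$ is strictly smaller in dominance; so one reaches $\ddelta$ from any $\mu$ using only these two lowering moves, never \eqref{qkz1}. Thus $f_{\mu} = \widetilde{T}_i^{-1} f_{\nu}$ (resp.\ $\widetilde{T}_N^{-1} f_{\nu}$), and since each $\widetilde{T}_j$ satisfies the quadratic relation \eqref{eq:w1}, its inverse is a polynomial in $\widetilde{T}_j$ and inherits any triangularity of $\widetilde{T}_j$. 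By the inductive hypothesis $f_{\nu}$ has leading monomial $\z^{\nu}$, and applying the inverse operator carries this to $\z^{s_i\nu}=\z^{\mu}$ up to a nonzero scalar plus terms strictly below $\mu$, giving the required expansion of $f_{\mu}$.

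The main obstacle is precisely the triangularity of the Hecke generators on the monomial basis with respect to $\preceq$, together with the bookkeeping that the leading term is transported correctly. Concretely, from $\widetilde{T}_i = s_i + \frac{(t-1)(z_i s_i - z_{i+1})}{z_i - z_{i+1}}$ a direct computation shows that for $\nu_i < \nu_{i+1}$ one has $\widetilde{T}_i \z^{\nu} = t\,\z^{s_i\nu} + \sum(\ast)\z^{\kappa}$ with every $\kappa$ satisfying $\nu \preceq \kappa \prec s_i\nu$, whence $\widetilde{T}_i^{-1}\z^{\nu} = \z^{s_i\nu} + (\text{strictly lower})$; the boundary generator $\widetilde{T}_N$ admits an analogous expansion in which the sign of the last coordinate is flipped. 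The remaining point is to check that applying $\widetilde{T}_i^{-1}$ (resp.\ $\widetilde{T}_N^{-1}$) to the lower terms of $f_{\nu}$ produces nothing $\succeq \mu$, which follows by applying the same triangularity estimate termwise. These are exactly the standard triangularity properties of the Noumi operators that already underlie Sahi's triangularity of the $E_{\mu}$, so once they are recorded the induction closes and the proposition follows.
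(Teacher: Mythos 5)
The paper does not prove this proposition at all --- it is quoted verbatim from \cite[Proposition 1]{Garbali} with the remark that ``it follows from the definitions,'' so there is no internal proof to compare against. Your strategy (show each $f_{\mu}$ is monomial--triangular with nonzero coefficient on $\z^{\mu}$, then compose with Sahi's unitriangularity of the $E_{\mu}$) is the standard route and is sound in outline: the identification $f_{\ddelta}\propto E_{\ddelta}$ via \Cref{prop:eigen} and simplicity of the joint $Y$-spectrum, and the climb up the orbit using only \eqref{qkz3} and \eqref{qkz4} (reaching $\ddelta$ without ever invoking \eqref{qkz1}, since $\ddelta$ is the $\preceq$-minimum of the $W_0$-orbit), are both correct.

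The one step that is asserted rather than established is the closure claim at the end: that $\widetilde{T}_i^{-1}$ applied to the lower-order part of $f_{\nu}$ produces nothing $\succeq \mu=s_i\nu$. Your ``termwise estimate'' only treats monomials $\z^{\kappa}$ with $\kappa_i<\kappa_{i+1}$, and even granting all cases of the local expansion, the needed implication is that $\kappa\prec\nu$ forces $s_i\kappa\prec s_i\nu$; within a single $W_0$-orbit this is a statement about the dominance order that does not follow from partial-sum inequalities alone (for unbounded entries one can have $\kappa\le\nu$ with $P_{i-1}(\kappa)+\kappa_{i+1}>P_{i-1}(\nu)+\nu_{i+1}$), and the standard fix is to phrase $\preceq$ via the Bruhat order on the orbit and invoke its lifting property, or to cite directly the lemma that each $\widetilde{T}_j^{\pm1}$ preserves $\mathrm{span}\{\z^{\kappa}:\kappa\preceq\mu\}$. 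Relatedly, your intermediate claim that the extra terms $\z^{\kappa}$ in $\widetilde{T}_i\z^{\nu}$ satisfy $\nu\preceq\kappa$ is false (for $\nu=(-1,1)$ the term $(0,0)$ satisfies $(0,0)\prec(-1,1)$), though this is harmless since only $\kappa\prec s_i\nu$ is used. Finally, note that \Cref{def:fmu} imposes no normalization, so the identically zero family is formally a qKZ family and the ``invertible'' change of basis requires the nondegeneracy you tacitly assume; this caveat is inherited from the statement itself. With the closure lemma properly quoted or proved, your argument is complete.
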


It follows that the Laurent monomials appearing in the support 
of each $f_{\mu}$ can be characterized as follows.
\begin{corollary}
Let $\{f_{\mu}\}$ be a qKZ family.
Then $f_{\mu}(\z;q,t)$ has the form
$$f_{\mu}(\z;q,t) = \sum_{\nu \preceq \mu} e_{\mu \nu}(q,t) \z^{\nu}.$$ 
\end{corollary}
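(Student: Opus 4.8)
The plan is to combine the two preceding results in a direct chain. By \Cref{prop:change}, any qKZ family $\{f_{\mu}\}$ is related to the nonsymmetric Koornwinder polynomials $\{E_{\nu}\}$ by a triangular change of basis with respect to the order $\preceq$, so that
\[
f_{\mu} = \sum_{\nu \preceq \mu} d_{\mu \nu}(q,t)\, E_{\nu}.
\]
Thus it suffices to understand the monomial support of each $E_{\nu}$ appearing in this sum and then transport that information through the triangular sum. The key input is the triangularity of the $E_{\mu}$ themselves in the monomial basis, namely the stated fact (from \cite{Sahi}, cf.\ \cite{Can15}) that
\[
E_{\nu}(\z) = \z^{\nu} + \sum_{\omega \prec \nu} c_{\omega}\, \z^{\omega}.
\]

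First I would fix $\mu$ and expand $f_{\mu}$ using \Cref{prop:change}. Substituting the monomial expansion of each $E_{\nu}$ gives
\[
f_{\mu} = \sum_{\nu \preceq \mu} d_{\mu \nu}(q,t) \Bigl( \z^{\nu} + \sum_{\omega \prec \nu} c_{\omega}^{(\nu)}\, \z^{\omega} \Bigr).
\]
Every Laurent monomial $\z^{\omega}$ that can appear is therefore indexed by some $\omega$ satisfying either $\omega = \nu \preceq \mu$ or $\omega \prec \nu \preceq \mu$ for some $\nu$ in the support. To conclude that every such $\omega$ obeys $\omega \preceq \mu$, the essential step is transitivity of $\preceq$: from $\omega \prec \nu$ and $\nu \preceq \mu$ we need $\omega \preceq \mu$. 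Collecting the coefficients of each $\z^{\omega}$ into a single rational function $e_{\mu \omega}(q,t)$ then yields exactly the claimed form
\[
f_{\mu}(\z;q,t) = \sum_{\omega \preceq \mu} e_{\mu \omega}(q,t)\, \z^{\omega}.
\]

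The one point that genuinely needs checking is that $\preceq$ is a transitive partial order, since the argument rests entirely on chaining $\omega \prec \nu \preceq \mu$ into $\omega \preceq \mu$. This is where I expect the only real (though still mild) obstacle to lie: the order $\preceq$ is defined in two tiers, first comparing the dominant representatives $\mu^+$ under $\leq$ and, in case of a tie $\mu^+=\nu^+$, falling back on the dominance order $\leq$ on the compositions themselves. Transitivity of $\preceq$ therefore reduces to transitivity of the dominance order $\leq$ (a standard fact, the partial sums $\sum_{i\le j}(\mu_i-\nu_i)$ add across a chain) together with a routine case split on whether the dominant representatives agree at each link of the chain. Once transitivity is in hand, the corollary follows immediately by reindexing the double sum, with no further computation required.
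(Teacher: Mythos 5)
Your argument is correct and is exactly the route the paper intends: the corollary is stated as an immediate consequence of \Cref{prop:change} together with the triangularity $E_{\nu} = \z^{\nu} + \sum_{\omega \prec \nu} c_{\omega}\z^{\omega}$, with the only point to verify being transitivity of $\preceq$, which you correctly reduce to transitivity of dominance order plus a case split on whether the dominant representatives coincide. Nothing further is needed.
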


\subsection{Factorization of Koornwinder polynomials at $q=1$}\label{sec:factor}

In this section we sketch the proof of the factorization of Koornwinder 
polynomials stated in \Cref{cor:Koorn at q=1}.  We thank Eric Rains for 
explaining this argument to us.

Define $\mathbb{T}^M:=\{(z_1,\ldots,z_M)\in\mathbb{C}^M:|z_1|=\cdots=|z_M|=1\}$ to be the $M$-dimensional complex torus, denote $\z=z_1,\ldots,z_M$, and $dT(\z):=\frac{1}{2^M M!(2\pi i)^M}\frac{dz_1}{z_1}\cdots\frac{dz_M}{z_M}$. The Koornwinder polynomial $K_{\lambda}(\z;q,b,c,d;q,t)$ is characterized as the unique symmetric Laurent polynomial with leading monomial $z^{\lambda}$ that satisfies the orthogonality condition
\[
\int_{\mathbb{T}^M}K_{\lambda}(\z;a,b,c,d;q,t)K_{\mu}(\z;a,b,c,d;q,t)\Delta^M(\z;a,b,c,d;q,t)dT(\z)=0
\]
for $\mu\neq \lambda$, where $\Delta^M(\z;a,b,c,d;q,t)=\Delta^M(z_1,\ldots,z_M;a,b,c,d;q,t)$ is the Koornwinder density. 

At $t=1$, the Koornwinder orthogonality density has the form $\prod_i \Delta(z_i;a,b,c,d;q)$,
where $\Delta$ is the Askey-Wilson density. This leads to the following.

\begin{claim}\label{claim1}
Let $\lambda = (\lambda_1,\dots,\lambda_N)$. The following quantities are proportional:
\begin{align*}
	K_\lambda(z_1,\dots,z_N;a,b,c,d;q,t=1) &\propto 
	\sum_{\pi\in S_N} 
	\prod_{1\le i\le N} p_{\lambda_i}(z_{\pi(i)};a,b,c,d;q),
\end{align*}
	where $p_k(z;a,b,c,d;q)$ is the Askey-Wilson polynomial.
\end{claim}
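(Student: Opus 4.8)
The plan is to exploit the factorization of the Koornwinder orthogonality density at $t=1$. Since $\Delta^N(\z;a,b,c,d;q,t=1) = \prod_{i=1}^N \Delta(z_i;a,b,c,d;q)$ factors as a product of one-variable Askey--Wilson densities, the orthogonality integral decouples variable by variable. The strategy is to write down an explicit candidate symmetric Laurent polynomial built from Askey--Wilson polynomials, verify it is orthogonal to all $K_{\mu}$ with $\mu \neq \lambda$ against this factored density, check it has the correct leading monomial $z^{\lambda}$, and then conclude by the uniqueness characterization that it must be proportional to $K_{\lambda}(\z;a,b,c,d;q,1)$.

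Concretely, I would first recall that the one-variable Askey--Wilson polynomials $\{p_k(z;a,b,c,d;q)\}_{k\ge 0}$ are orthogonal with respect to the Askey--Wilson density $\Delta(z;a,b,c,d;q)$, and that $p_k$ is a symmetric Laurent polynomial in $z$ (invariant under $z\mapsto z^{-1}$) with leading term $z^k$. The candidate is the symmetrized product
\begin{equation*}
P_{\lambda}(\z) := \sum_{\pi\in S_N}\prod_{1\le i\le N} p_{\lambda_i}(z_{\pi(i)};a,b,c,d;q),
\end{equation*}
which is manifestly symmetric in $z_1,\dots,z_N$ and invariant under each $z_i\mapsto z_i^{-1}$, hence is a Weyl-group-invariant (type $C_N$) Laurent polynomial of the right kind. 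Its highest monomial in dominance order is $z^{\lambda}$, coming from the identity permutation and the leading terms of each $p_{\lambda_i}$, so after normalizing the leading coefficient it matches the normalization of $K_{\lambda}$.

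The key computation is the orthogonality. For a partition $\mu\neq\lambda$, I would expand $P_{\mu}$ similarly and compute $\int_{\mathbb{T}^N} P_{\lambda}\,P_{\mu}\,\prod_i\Delta(z_i)\,dT(\z)$. Because the density factors, the integral splits into a sum over pairs of permutations $\pi,\sigma$ of products of one-variable integrals $\int p_{\lambda_i}(z_{\pi(i)})\,p_{\mu_j}(z_{\sigma(j)})\,\Delta(z_{\pi(i)})$, each of which vanishes unless the two Askey--Wilson indices agree. This forces the surviving terms to correspond to a matching of the multiset $\{\lambda_i\}$ with $\{\mu_j\}$; such a matching exists only when $\lambda$ and $\mu$ are equal as partitions, so the integral is zero when $\mu\neq\lambda$. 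Since the $\{K_{\mu}\}$ and the symmetrized products $\{P_{\mu}\}$ are both upper-triangular bases with respect to dominance with the same leading monomials, orthogonality of $P_{\lambda}$ to every $P_{\mu}$ with $\mu\neq\lambda$ transfers to orthogonality to every $K_{\mu}$ with $\mu\neq\lambda$, and uniqueness gives $K_{\lambda}(\z;a,b,c,d;q,1)\propto P_{\lambda}(\z)$.

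The main obstacle I anticipate is bookkeeping the combinatorics of repeated parts in $\lambda$: when $\lambda$ has equal entries the symmetrization $P_{\lambda}$ overcounts and the permutation sum in the orthogonality integral produces a nontrivial constant (a product of stabilizer-order factors and the norms $\|p_{\lambda_i}\|^2$), so one must track proportionality constants carefully rather than claim equality. A secondary subtlety is justifying the transfer of triangularity between the two bases and confirming that the leading term of $P_{\lambda}$ is genuinely $z^{\lambda}$ and not killed by cancellation --- this follows because distinct permutations contribute distinct top monomials, but it should be stated explicitly. Since the claim only asserts proportionality, the constant need not be computed, which keeps the argument clean.
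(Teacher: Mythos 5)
Your proposal is correct and follows essentially the same route as the paper: the paper's argument for this claim is precisely that the Koornwinder density factors at $t=1$ into a product of one-variable Askey--Wilson densities, so the symmetrized product of Askey--Wilson polynomials inherits the defining orthogonality and triangularity, and is therefore proportional to $K_\lambda(\z;a,b,c,d;q,1)$. Your attention to the stabilizer factor for repeated parts matches the paper's remark that the constant of proportionality is the size of the stabilizer of $\lambda$.
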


The constant of proportionality is the size of the stabilizer of the 
composition $\lambda$. 
The following theorem comes from \cite{Mimachi}, 
see also \cite[Theorem 5.18]{Rainspoly}
\begin{theorem}\label{claim2}
The Koornwinder polynomials satisfy a Cauchy identity
\begin{align*}
\sum_{\mu \subseteq M^N}
	& (-1)^{NM-|\mu|}
K_\mu(x_1,\dots,x_N;a,b,c,d;q,t)
K_{N^M-\mu'}(y_1,\dots,y_M;a,b,c,d;t,q) \\
	&=
\prod_{1\le i\le N}\prod_{1\le j\le M}
	(x_i+\frac{1}{x_i}-y_j-\frac{1}{y_j}).
\end{align*}
(Note that $q$ and $t$ are swapped in the second Koornwinder polynomial above.)
\end{theorem}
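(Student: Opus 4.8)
The plan is to view the right-hand kernel $\Pi(x,y):=\prod_{i=1}^{N}\prod_{j=1}^{M}(x_i+x_i^{-1}-y_j-y_j^{-1})$ as the reproducing kernel of a duality pairing and to expand it in Koornwinder bases. For fixed $y$, $\Pi$ is a $BC_N$-symmetric Laurent polynomial in $x_1,\dots,x_N$ of degree at most $M$ in each variable $x_i+x_i^{-1}$, so it lies in the span of $\{K_\mu(x;a,b,c,d;q,t):\mu\subseteq M^N\}$; hence $\Pi=\sum_{\mu\subseteq M^N}K_\mu(x;a,b,c,d;q,t)\,c_\mu(y)$ for uniquely determined coefficients $c_\mu(y)$. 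Since $\Pi$ is $BC_M$-symmetric in $y$ and every $K_\mu(x)$ is independent of $y$, each $c_\mu(y)$ is itself a $BC_M$-symmetric Laurent polynomial, of degree at most $N$ in each $y_j+y_j^{-1}$, hence lying in the span of $\{K_\nu(y;a,b,c,d;t,q):\nu\subseteq N^M\}$. Thus the entire content of \Cref{claim2} is the claim that, under the box-complement map $\mu\mapsto N^M-\mu'$ (for which $|N^M-\mu'|=NM-|\mu|$), one has $c_\mu(y)=(-1)^{NM-|\mu|}\,K_{N^M-\mu'}(y;a,b,c,d;t,q)$.

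Next I would pin down the $c_\mu$ by the defining properties of Koornwinder polynomials rather than by computing them head-on. Using Koornwinder orthogonality in the $x$-variables against the density $\Delta^N(x;a,b,c,d;q,t)$ gives $c_\mu(y)=\langle\Pi(\cdot,y),K_\mu\rangle_x/\langle K_\mu,K_\mu\rangle_x$. It then suffices to show two things: (i) the leading monomial, in dominance order, of $c_\mu(y)$ is $(-1)^{NM-|\mu|}y^{\,N^M-\mu'}$; and (ii) the family $\{c_\mu\}_{\mu\subseteq M^N}$ is mutually orthogonal with respect to the swapped-parameter density $\Delta^M(y;a,b,c,d;t,q)$. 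Properties (i) and (ii) are exactly the leading-term-plus-orthogonality characterization of the Koornwinder polynomials $K_{N^M-\mu'}(y;t,q)$, and so they force the identification. Property (ii) is equivalent to a box-complement duality: the double pairing of $\Pi$ against $K_\mu(x)$ in $x$ and against $K_\nu(y)$ in $y$ (with swapped parameters) should vanish unless $\nu=N^M-\mu'$, and equal a prescribed constant otherwise.

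To establish this duality at generic $(q,t)$ I would use one of two routes. The first is the interpolation and evaluation-symmetry machinery for Koornwinder polynomials (the approach behind \cite{Rainspoly}), where the required $\delta_{\mu,\,N^M-\nu'}$ behaviour of the double integral follows from an evaluation symmetry together with the explicit Koornwinder norm. The second is an induction on $N$ (and $M$) that peels off one variable using the factorization $\Pi(x_1,\dots,x_N;y)=\bigl(\prod_j(x_N+x_N^{-1}-y_j-y_j^{-1})\bigr)\,\Pi(x_1,\dots,x_{N-1};y)$ together with the Koornwinder Pieri rule, tracking how the complement map interacts with adding a row or column to the $N\times M$ box. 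In both routes the normalization and sign can be fixed by degenerating at $t=1$: there $\Delta^N(x;q,t)$ factors as $\prod_i\Delta(x_i)$, so $K_\mu(x;q,1)$ becomes the Askey--Wilson symmetrization of \Cref{claim1}, while the swapped-parameter factor $K_{N^M-\mu'}(y;1,q)$ factors by the $q=1$ rule of \Cref{cor:Koorn at q=1}; the identity then collapses to a product over $(i,j)$ of the one-variable Askey--Wilson dual Cauchy identity, which pins down the constant and the sign $(-1)^{NM-|\mu|}$.

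The main obstacle is step (ii) for generic $(q,t)$, namely the exact box-complement duality with the correct constant: neither the vanishing for $\nu\neq N^M-\mu'$ nor the precise nonvanishing value is visible from the kernel alone, and both routes above require genuine Koornwinder input (an evaluation symmetry, or a Pieri-rule computation) rather than formal manipulation. I expect the combinatorial bookkeeping of the complement $\mu\mapsto N^M-\mu'$ under the recursion, together with its compatibility with the Pieri rule and with the sign, to be the delicate part, and the $t=1$ degeneration to serve as the consistency check that fixes the overall normalization.
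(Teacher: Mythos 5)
The paper does not actually prove \Cref{claim2}: it is quoted as a known result, attributed to Mimachi and to \cite[Theorem 5.18]{Rainspoly}, and is used as an ingredient in the sketch of \Cref{cor:Koorn at q=1}. So there is no in-paper argument to compare yours against; what you have written is an attempt at a theorem the authors deliberately take as input.

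As a standalone proof attempt, your setup is correct and matches the standard strategy: $\Pi(x,y)$ is $W(BC_N)$-invariant of degree at most $M$ in each $x_i+x_i^{-1}$, hence expands uniquely as $\sum_{\mu\subseteq M^N}K_\mu(x;q,t)\,c_\mu(y)$ with each $c_\mu$ a $BC_M$-symmetric Laurent polynomial of degree at most $N$, and the theorem is exactly the identification $c_\mu(y)=(-1)^{NM-|\mu|}K_{N^M-\mu'}(y;t,q)$. The genuine gap is your step (ii): the mutual orthogonality of the $c_\mu$ with respect to $\Delta^M(y;a,b,c,d;t,q)$, equivalently the box-complement duality of the double pairing, is not a formal consequence of anything you have established --- it \emph{is} the content of the theorem, and you defer it to ``an evaluation symmetry'' or ``a Pieri-rule computation'' without carrying either out. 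Even step (i), the claim that the dominance-leading monomial of $c_\mu(y)$ is $(-1)^{NM-|\mu|}y^{N^M-\mu'}$, needs an argument (a triangularity computation comparing monomial expansions of the product against the $BC$-orbit sums), and the $t=1$ degeneration can only fix an overall constant once the vanishing for $\nu\neq N^M-\mu'$ is already known at generic parameters. For what it is worth, Mimachi's actual proof identifies the $c_\mu$ not by orthogonality but by showing they satisfy the Koornwinder $q$-difference equation in $y$ with parameters $q$ and $t$ swapped and the correct eigenvalue; this follows from a kernel identity relating the action of the Koornwinder operator in $x$ on $\Pi$ to its action in $y$, after which the leading-term computation pins down the answer. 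That route supplies exactly the ``genuine Koornwinder input'' your outline is missing, and is more direct than either of the two alternatives you propose.
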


If we fix a partition $\lambda \subseteq M^N$, with conjugate partition
denoted $\lambda'$, then 
multiply both sides of \cref{claim2} by $K_{N^M-\lambda'}(y_1,\dots,y_M;a,b,c,d;t,q)$, and integrate over $\mathbb{T}^M$ against the Koornwinder density $\Delta^{(M)}(y_1,\dots,y_M;a,b,c,d;t,q)$, we obtain \Cref{cor:1}. 
({For ease of reading, in what follows, we omit the bounds of integration and write $\int{f(\z)}$ to mean $\int_{\mathbb{T}_n}{f(\z)dT(\z)}$.})

\begin{corollary} \label{cor:1}
$K_\lambda  (x_1,\dots,x_N;a,b,c,d;q,t)$ is proportional to
\[        
\int{\prod_{\substack{1\le i\le N\\1\le j\le M}}(x_i+\frac{1}{x_i}-z_j- \frac{1}{z_j})K_{N^M-\lambda'}(z_1,\ldots,z_M;a,b,c,d;t,q)\Delta^{(M)}(\z;a,b,c,d;t,q)}
\]
\end{corollary}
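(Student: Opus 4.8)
The plan is to derive \Cref{cor:1} as a direct consequence of \Cref{claim2} (the Cauchy identity) by the standard technique of ``extracting a single Koornwinder polynomial via orthogonality.'' First I would fix the target partition $\lambda \subseteq M^N$ and regard the $x_i$ as fixed parameters, so that both sides of the Cauchy identity become symmetric Laurent polynomials in the $y$-variables $y_1,\dots,y_M$. The left-hand side is a finite sum over $\mu \subseteq M^N$ of terms $(-1)^{NM-|\mu|} K_\mu(x;a,b,c,d;q,t) K_{N^M-\mu'}(y;a,b,c,d;t,q)$, where for the $y$-dependence the relevant factor is $K_{N^M-\mu'}(y;a,b,c,d;t,q)$.

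Next I would multiply both sides of \Cref{claim2} by $K_{N^M-\lambda'}(y_1,\dots,y_M;a,b,c,d;t,q)$ and integrate over $\mathbb{T}^M$ against the Koornwinder density $\Delta^{(M)}(y;a,b,c,d;t,q)$ (note that throughout this integration $q$ and $t$ play their swapped roles, matching the second Koornwinder polynomial in the Cauchy identity). On the left-hand side, the orthogonality characterization of the Koornwinder polynomials recalled at the start of \Cref{sec:factor} kills every term of the sum except the one for which $N^M - \mu' = N^M - \lambda'$, i.e. $\mu' = \lambda'$, hence $\mu = \lambda$. That surviving term contributes $(-1)^{NM - |\lambda|} K_\lambda(x;a,b,c,d;q,t)$ times the nonzero norm $\int K_{N^M-\lambda'}^2 \,\Delta^{(M)}$, a constant independent of the $x_i$. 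On the right-hand side, integrating the product $\prod_{i,j}(x_i + x_i^{-1} - y_j - y_j^{-1})$ against $K_{N^M-\lambda'}(y;\cdots)\Delta^{(M)}(y;\cdots)$ gives exactly the integral displayed in \Cref{cor:1} (after renaming $y_j \to z_j$). Equating the two sides and absorbing the scalar norm and sign into the proportionality constant yields the claimed proportionality.

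The only genuine subtlety — and the step I would treat most carefully — is verifying that the orthogonality projection is legitimate, namely that $K_{N^M-\lambda'}(y;a,b,c,d;t,q)$ is itself a Koornwinder polynomial in the $y$-variables with parameters $(a,b,c,d;t,q)$ (with $q,t$ swapped), so that the orthogonality relation $\int K_\nu K_{\nu'} \Delta^{(M)} = 0$ for $\nu \neq \nu'$ applies with these swapped parameters. This is precisely the form in which \Cref{claim2} and the density $\Delta^{(M)}$ are stated, so no new identity is needed; one simply observes that $N^M - \mu'$ ranges over all partitions contained in $M^N$ as $\mu$ does, so the orthogonality indexing is well-posed. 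I expect the main obstacle to be purely bookkeeping: keeping the swapped roles of $q$ and $t$ and the conjugation $\mu \mapsto N^M - \mu'$ consistent throughout, and confirming that the norm factor is nonzero so that the proportionality is nontrivial. Since \Cref{cor:1} is stated only up to proportionality, all constants of proportionality (the sign $(-1)^{NM-|\lambda|}$ and the squared norm) may be suppressed, which removes any need for delicate normalization computations.
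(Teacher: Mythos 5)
Your proposal is correct and follows exactly the paper's argument: the paper likewise obtains \Cref{cor:1} by multiplying both sides of \Cref{claim2} by $K_{N^M-\lambda'}(y_1,\dots,y_M;a,b,c,d;t,q)$ and integrating over $\mathbb{T}^M$ against $\Delta^{(M)}(y;a,b,c,d;t,q)$, with orthogonality isolating the $\mu=\lambda$ term. Your added care about the swapped roles of $q$ and $t$ and the well-posedness of the indexing $\mu\mapsto N^M-\mu'$ is a faithful elaboration of what the paper leaves implicit.
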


 Taking $q\to 1$ in \Cref{cor:1}, using \Cref{claim1} to expand the Koornwinder polynomial in the integrand, and factoring $\Delta^M$, we obtain that $K_{\lambda}(x_1,\dots,x_N;a,b,c,d;1,t)$ is proportional to

\[        \int{\prod_{\substack{1\le i\le N\\1\le j\le M}}(x_i+1/x_i-z_j-1/z_j)\sum_{\pi \in S_M} \prod_{1\le k\le M} p_{(N^M-\lambda')_k}(z_{\pi(k)};a,b,c,d;t)\prod_{1\le k\le M} \Delta(z_k;a,b,c,d;t)}.
\]

    Now observing that all $M!$ terms in the sum over $\pi\in S_M$ give the same integral tells us that (assuming $\lambda_1\le M$)
\begin{align*}
        K_\lambda & (x_1,\dots,x_N;a,b,c,d;1,t)
\propto \\
        & \int{\prod_{\substack{1\le i\le N\\1\le j\le M}}(x_i+1/x_i-z_j-1/z_j)\left( \prod_{1\le k\le M} p_{(N^M-\lambda')_k}(z_k;a,b,c,d;t)\Delta(z_k;a,b,c,d;t) \right)} \\
        &= 
\prod_{1\le j\le M}
\int{\prod_{1\le i\le N}(x_i+1/x_i-z-1/z) p_{N-\lambda'_{M+1-j}}(z;a,b,c,d;t)\Delta(z;a,b,c,d;t)},
\end{align*}
where the last equality comes from the Fubini theorem.

     The requirement that $\lambda_1\le M$ can be eliminated by rewriting the univariate integral as a coefficient in an expansion in Askey-Wilson polynomials,
  and noting that the coefficient of
  $p_N(z; a, b, c, d;t)$ in $\prod_{1\le j\le N} (z+\frac{1}{z}-x_j-\frac{1}{x_j})$ is $1$.

Let $[p_i]F(z)$ denote the coefficient of $p_i$ in the polynomial $F(z)$.
  We can thus rewrite this as an infinite product:
$$K_\lambda(x_1,\dots,x_N;a,b,c,d;1,t)
\propto
\prod_{1\le i}
[p_{N-\lambda'_i}(z;a,b,c,d;t)]
\prod_{1\le j\le N}(z+\frac{1}{z}-x_j-\frac{1}{x_j})$$

      In particular, we have
$$K_{1^{\ell}}(x_1,\dots,x_N;a,b,c,d;1,t)
\propto
[p_{N-\ell}(z;a,b,c,d;t)]
\prod_{1\le j\le N}(z+\frac{1}{z}-x_j-\frac{1}{x_j}).$$

Thus one can write
$$K_\lambda(x_1,\dots,x_N;a,b,c,d;1,t)
\propto
\prod_{1\le i}
K_{1^{\lambda'_i}}(x_1,\dots,x_N;a,b,c,d;1,t),$$
where the constant can be seen to be $1$ by comparing the leading terms.

\section{The proof of \Cref{thm:main}}
\label{sec:operators}

The main ingredient in our proof of \Cref{thm:main} 
 is the following theorem, which says 
that the Laurent polynomials $\{F_{\mu}(\z;t)\}$ are a qKZ family.
In what follows, if $\mu$ is a word in $\{1,-1,0\}^N$
then we define 
 $||\mu||=N+r$, where $r$ is the number of $0$'s in $\mu$.

\begin{theorem}\label{thm:qKZ}
Let $\lambda \in \{1,-1,0\}^N$ be a composition. 
Then if we let $\mu$ range over all signed permutations of $\lambda$, 
the collection $\{F_{\mu}(\z;t)\}_{\mu} \subset
\C[z_1^{\pm 1}, \dots,
z_N^{\pm 1}]$ 
of Laurent polynomials 
is a qKZ family, in the sense of \Cref{def:fmu}.
\end{theorem}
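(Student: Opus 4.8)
The goal is to verify that the family $\{F_{\mu}(\z;t)\}$ satisfies each of the six defining relations \eqref{qkz1}--\eqref{qkz4} of a qKZ family. My strategy is to reduce everything to the behavior of the combinatorial generating polynomials $\tilde{\R}(\mu)$ under the Noumi operators $\widetilde{T}_0, \widetilde{T}_i, \widetilde{T}_N$, exploiting the explicit expansion
\[
F_{\mu}(\z;t) = \sum_{S \subseteq V} \tilde{\R}(\mu\vert_{\overline{S}}) \cdot \prod_{i\in S} (z_i^{\mu_i}-1).
\]
The key observation is that each operator $\widetilde{T}_j$ acts locally: $\widetilde{T}_i$ for $1 \leq i < N$ touches only $z_i, z_{i+1}$, while $\widetilde{T}_0$ and $\widetilde{T}_N$ touch only $z_1$ and $z_N$ respectively. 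So when I apply one of these operators to $F_{\mu}$, the sum over $S$ splits according to whether the relevant index (or indices) lies in $S$, and only the factors $(z_i^{\mu_i}-1)$ at those indices — together with the restricted tableaux polynomials $\tilde{\R}(\mu\vert_{\overline{S}})$ that depend on whether those positions are deleted — can change. This localizes the entire verification.

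\textbf{Key steps.}
First I would handle the ``interior'' relations \eqref{qkz2} and \eqref{qkz3} governing $\widetilde{T}_i$, $1\leq i<N$. Writing $\widetilde{T}_i = s_i + \frac{(t-1)(z_i s_i - z_{i+1})}{z_i - z_{i+1}}$, I split the sum over $S$ into four cases according to membership of $i$ and $i+1$ in $S$, and compute the action of $\widetilde{T}_i$ on each product $\prod_{k\in S}(z_k^{\mu_k}-1)$. The crucial input is that deleting index $i$ or $i+1$ and then swapping/equating the corresponding letters of $\mu$ must match the tableau identity: that is, $\tilde{\R}$ restricted to subwords must interact correctly with the transposition of adjacent equal or unequal letters. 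When $\mu_i = \mu_{i+1}$, the factor $\widetilde{T}_i$ should reproduce eigenvalue $t$; when $\mu_i > \mu_{i+1}$, it should swap the two entries. I expect these to follow from the locality of $\widetilde{T}_i$ together with a short identity relating $\tilde{\R}$ of a word to $\tilde{\R}$ of the word with two adjacent letters transposed. Next I would treat the boundary operators: for $\widetilde{T}_0$, split $S$ according to whether $1 \in S$, use the explicit form $\widetilde{T}_0 = -\frac{ac}{q} - \frac{(z_1-a)(z_1-c)}{z_1}\cdot\frac{1-s_0}{z_1 - qz_1^{-1}}$, and verify \eqref{qkz1} (when $\mu_1<0$, producing $q^{\mu_1}f_{-\mu_1,\dots}$) and \eqref{left0} (when $\mu_1=0$, producing eigenvalue $t_0 = -acq^{-1}$). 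The relation \eqref{left0} is where the normalization of $\tilde{\R}$ by $\frac{(t-1)^{N-r}}{\prod(\alpha\beta t^i - \gamma\delta)}$ should pay off, since deleting a boundary position changes $r$ and hence the normalizing product. Symmetrically, $\widetilde{T}_N$ gives \eqref{right0} and \eqref{qkz4}. Throughout I would use the change of variables \eqref{eq:change} to translate between the tableau parameters $\alpha,\beta,\gamma,\delta$ and the Koornwinder parameters $a,b,c,d$.

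\textbf{The main obstacle.}
The hard part will be establishing the combinatorial identities on $\tilde{\R}$ that drive the boundary relations — specifically, showing that the action of $\widetilde{T}_0$ (resp.\ $\widetilde{T}_N$) on the truncated tableaux generating polynomials reproduces exactly the entering/exiting dynamics of the open ASEP encoded in \eqref{qkz1} and \eqref{qkz4}. These are precisely the relations that ``create'' or ``annihilate'' a particle at the boundary, so they must match the way deleting a boundary square from a rhombic staircase tableau changes $\R(\mu)$ and the normalization. I anticipate needing a recursive or bijective lemma describing how $\R(\mu)$ decomposes when a boundary tile is removed, analogous to the matrix-product/recursion structure underlying \Cref{thm:oldCMW}. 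The illustrative computation of $\tilde{\R}(\white\grey)$ via $Y_1 = T_1T_2T_1T_0$ in the example preceding the proof of \Cref{prop:eigen} suggests that these boundary identities can indeed be pinned down by forcing the correct eigenvalue, but making this uniform across all $\mu$ and all lengths $N$ — rather than checking small cases — is the real content of the theorem.
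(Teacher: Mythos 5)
Your proposal follows essentially the same route as the paper: expand $F_\mu$ as a sum over $S$, split according to membership of the relevant indices in $S$, compute the local action of the Noumi operators on the factors $(z_i^{\mu_i}-1)$, and close the argument with identities relating $\tilde{\R}$ of a word to $\tilde{\R}$ of the word with adjacent letters swapped or a boundary letter flipped. The ``main obstacle'' you identify is exactly the Matrix Ansatz relations of \cite[Theorem~5.4]{CMW17} (restated here as \Cref{thm:CMW} and renormalized in \Cref{cor:MA}), which the paper imports from prior work rather than reproving, so your plan is correct and complete modulo that citation.
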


We will prove \Cref{thm:qKZ} over the course of this section.
Recall that we identify $\black = 1$, $\white=-1$, and 
$\grey = 0$.
We start by recalling some 
``Matrix Ansatz''-style relations among 
the generating polynomials $\R(\mu)$ that previously appeared in
\cite{CMW17}.
 It will turn out 
that the Matrix Ansatz relations are closely related to the action of the Hecke operators. 

\begin{theorem}[{\cite[Theorem 5.4]{CMW17}}]\label{thm:CMW}
For any words $x$ and $y$ in the letters $\{\white,\grey, \black\}=\{-1,0,1\}$,

	\begin{align}
t\R(x\black\white y)&=\R(x\white\black y)+\lambda_{||x||+||y||+2}(\R(x\black y)+\R(x\white y))\\
t\R(x\grey\white y)&=\R(x\white\grey y)+\lambda_{||x||+||y||+3}\R(x\grey y)\\
t\R(x\black\grey y)&=\R(x\grey\black y)+\lambda_{||x||+||y||+3}\R(x\grey y)\\
\beta \R(x\black)&=\delta \R(x\white)+\lambda_{||x||+1}\R(x)\\
\alpha \R(\white x)&=\gamma \R(\black x)+\lambda_{||x||+1}\R(x)
\end{align}
	where $\lambda_{N}=(\alpha\beta t^{N-1}-\gamma\delta)$. 

\end{theorem}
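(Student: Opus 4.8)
The plan is to prove all five identities by the same local, weight-tracking argument on rhombic staircase tableaux, exploiting the fact that each relation alters $\Gamma(\mu)$ only in a bounded region. First I would look for the evident symmetries of the setup — a reflection exchanging $\alpha\leftrightarrow\beta$, $\gamma\leftrightarrow\delta$, $\black\leftrightarrow\white$, and the two boundaries, combined with the top-to-bottom reading order — which should pair up the two boundary relations with each other and the two $\grey$-relations with each other. This reduces the work to one bulk ($\black\white$) relation, one boundary relation, and one $\grey\white$ relation.

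The key preliminary step is a \emph{strip-factorization lemma}. Since the $t$-weight of an empty tile depends only on the nearest nonempty tile to its right (in its west-strip) or below (in its north-strip), and since the filling rules force everything above an $\alpha$ or $\gamma$ in a north-strip, and everything left of a $\beta$ or $\delta$ in a west-strip, to be empty, the contribution of a single strip factors once we condition on the position and letter of its first nonempty tile. I would package this as a clean generating-function identity: the weighted sum over admissible fillings of a length-$m$ strip with a prescribed boundary tile is an explicit, geometric-in-$t$ expression in $t$ and the Greek letters. This is exactly what produces the power $t^{M-1}$ in $\lambda_M=\alpha\beta t^{M-1}-\gamma\delta$.

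With the lemma in hand, each relation becomes a finite identity which I would first rewrite in manifestly positive form. For the bulk relation, moving the $-\gamma\delta$ term to the left gives $t\R(x\black\white y)+\gamma\delta\,(\R(x\black y)+\R(x\white y))=\R(x\white\black y)+\alpha\beta t^{||x||+||y||+1}(\R(x\black y)+\R(x\white y))$, both sides now being positive sums of monomials. I would then condition on the fillings of the two squares coming from $\black\white$ (resp.\ $\white\black$) together with the states of the west- and north-strips through them, and match the two sides tableau-by-tableau: the generic configurations, where the two square-fillings can be swapped, pair $t\R(x\black\white y)$ against $\R(x\white\black y)$, while the obstructed configurations — where a square is forced empty or carries a boundary-type letter — are exactly those obtained by deleting one square from a tableau of $\Gamma(x\black y)$ or $\Gamma(x\white y)$, producing the $\gamma\delta$ and $\alpha\beta t^{||x||+||y||+1}$ correction terms. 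The boundary relation $\beta\R(x\black)=\delta\R(x\white)+\lambda_{||x||+1}\R(x)$ is the same argument with a single corner square, and the $\grey\white$ relation likewise, after re-tiling the two strips meeting at the $\grey$ by the distinguished-tiling rule, since there the local shape genuinely changes (a rhombus trades places with a square).

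The main obstacle is precisely the non-local weight dependence: inserting, deleting, or swapping a tile shifts the ``nearest nonempty tile'' reference for every empty tile lying beyond it in the affected strips, so one local move rescales a whole telescoping product of $t$-weights. Controlling this telescoping is the crux, and it is exactly what the strip-factorization lemma isolates; once the per-strip generating functions are known in closed form, the five relations reduce to routine polynomial bookkeeping. As an alternative, one could bypass the bijections by first proving a Matrix Ansatz — exhibiting operators $D_{\black},D_{\white},D_{\grey}$ and boundary covectors/vectors with $\R(\mu)=\langle W|\prod_i D_{\mu_i}|V\rangle$ — and then verifying the five identities as operator relations; but since $\R$ is defined combinatorially here, I would prefer the direct tableaux argument.
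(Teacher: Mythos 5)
First, a point of comparison: this paper does not prove the statement at all --- it is imported verbatim as \cite[Theorem 5.4]{CMW17}, so the only ``proof'' here is a citation. Your plan is in the right spirit for how such relations are actually established in that line of work (direct surgery on tableaux, with the $\lambda_M$ terms arising as corrections from configurations where the local swap is obstructed, rather than via an operator-valued Matrix Ansatz). But as written it rests on two steps that do not hold up.

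The first is the symmetry reduction. The weight rules are \emph{not} invariant under the reflection you propose ($\alpha\leftrightarrow\beta$, $\gamma\leftrightarrow\delta$, $\black\leftrightarrow\white$, west-strips $\leftrightarrow$ north-strips). For example, an empty square that sees $\beta$ to its right gets weight $t$ unconditionally, whereas the reflected rule would be ``an empty square that sees $\alpha$ below gets weight $t$ unconditionally'' --- but in the actual definition the $\alpha$-below case is coupled to a condition on what the square sees to its right ($\alpha$ or $\gamma$). Likewise the empty-rhombus rules assign $t^2$ versus $t$ in a way that does not match up under the reflection. So pairing the two $\grey$-relations with each other, and the two boundary relations with each other, is not free; either you prove a nontrivial weight-preserving involution (which is itself comparable in difficulty to proving the relations directly), or you must argue all five cases separately. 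The second, more serious, gap is the strip-factorization lemma, which is the load-bearing step of your whole plan. The $t$-weight of an empty square depends \emph{jointly} on its west-strip and its north-strip (``sees $\alpha$ or $\gamma$ to its right \emph{and} $\alpha$ or $\delta$ below''), so the contribution of a single strip does not factor once you condition on its first nonempty tile: you must also condition on the first nonempty tile of every transverse strip crossing it. You correctly name the non-local telescoping of weights as ``the crux,'' but the lemma you propose to isolate it is stated in a form that is false for squares (it is closer to true for rhombi, whose weight rules reference only one direction). Until you formulate and prove a version of the factorization that accounts for this two-directional coupling --- in practice, a joint analysis of the row and column through the swapped pair of squares, conditioning on where the first Greek letter appears in each --- the derivation of the exponent in $\lambda_{||x||+||y||+2}=\alpha\beta t^{||x||+||y||+1}-\gamma\delta$ and the matching of the correction terms with $\R(x\black y)$ and $\R(x\white y)$ remain assertions rather than proofs.
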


We can rewrite \Cref{thm:CMW} in terms of the $\tilde{\R}(\mu)$, obtaining the following statements.
\begin{corollary}\label{cor:MA}
For any words $x$ and $y$ in the letters $\{\white,\grey, \black\}=\{-1,0,1\}$,
\begin{align}
t\tilde{\R}(x\black\white y)&=\tilde{\R}(x\white\black y)+(t-1)(\tilde{\R}(x\black y)+\tilde{\R}(x\white y))\label{eq:c1a}\\
t\tilde{\R}(x\grey\white y)&=\tilde{\R}(x\white\grey y)+(t-1)\tilde{\R}(x\grey y)\label{eq:c1b}\\
t\tilde{\R}(x\black\grey y)&=\tilde{\R}(x\grey\black y)+(t-1)\tilde{\R}(x\grey y)\label{eq:c1c}\\
\alpha \tilde{\R}(\white x)&=\gamma \tilde{\R}(\black x)+(t-1)\tilde{\R}(x).\label{eq:c1d}\\
\beta \tilde{\R}(x\black)&=\delta \tilde{\R}(x\white)+(t-1)\tilde{\R}(x)\label{eq:c1e}
\end{align}
\end{corollary}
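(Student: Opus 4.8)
The plan is to obtain each identity of \Cref{cor:MA} from the corresponding Matrix Ansatz relation of \Cref{thm:CMW} by multiplying through by a single normalizing constant and tracking how that constant changes when a letter is deleted. Write $C(N,r):=\frac{(t-1)^{N-r}}{\prod_{i=2r}^{N+r-1}(\alpha\beta t^i-\gamma\delta)}$, so that $\tilde{\R}(\mu)=C(|\mu|,r)\,\R(\mu)$ when $\mu$ has $r$ grey's; the key structural point is that $C$ depends on $\mu$ only through the pair $(|\mu|,r)$.

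First I would record two bookkeeping facts that hold uniformly across all five relations. First, the maximal-length words occurring in a single relation --- for instance both $x\black\white y$ and $x\white\black y$ in \eqref{eq:c1a} --- share the same length $N$ and the same number $r$ of grey's, so passing from $\R$ to $\tilde{\R}$ scales each of them by the same factor $C(N,r)$. Second, every shorter word on a right-hand side is obtained by deleting a single $\black$ or $\white$ (never a $\grey$), so it has length $N-1$ but still $r$ grey's, and its normalization is $C(N-1,r)$.

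Next I would verify that the subscript of $\lambda$ appearing in each relation equals $N+r$, where $(N,r)$ is the data of the maximal word. Using $||x||=|x|+(\#\grey\text{ in }x)$, this is a one-line check in each case: $||x||+||y||+2=N+r$ in \eqref{eq:c1a}; $||x||+||y||+3=N+r$ in \eqref{eq:c1b} and \eqref{eq:c1c}, the extra $+1$ absorbing the middle grey; and $||x||+1=N+r$ in \eqref{eq:c1d} and \eqref{eq:c1e}. Thus in every relation the coefficient is $\lambda_{N+r}=\alpha\beta t^{N+r-1}-\gamma\delta$. The whole statement then collapses to the single telescoping identity
\[
\frac{C(N,r)}{C(N-1,r)}
=(t-1)\cdot\frac{\prod_{i=2r}^{N+r-2}(\alpha\beta t^i-\gamma\delta)}{\prod_{i=2r}^{N+r-1}(\alpha\beta t^i-\gamma\delta)}
=\frac{t-1}{\alpha\beta t^{N+r-1}-\gamma\delta}=\frac{t-1}{\lambda_{N+r}}.
\]
Multiplying the relevant relation of \Cref{thm:CMW} by $C(N,r)$ turns its maximal-length terms into the matching $\tilde{\R}$'s verbatim, while each short term $\lambda_{N+r}\R(\cdot)$ becomes $\frac{C(N,r)}{C(N-1,r)}\lambda_{N+r}\,\tilde{\R}(\cdot)=(t-1)\tilde{\R}(\cdot)$, which is precisely \eqref{eq:c1a}--\eqref{eq:c1e}.

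I do not anticipate a real obstacle, since the argument is a direct substitution. The only step demanding care is the bookkeeping above: one must confirm uniformly that the $\lambda$-subscript is $N+r$ and that deleting a letter leaves $r$ fixed, because a miscount of the grey's would shift the index range of the product defining $C$ and spuriously break the telescoping. As a sanity check one can take $x=y$ empty in \eqref{eq:c1b}, where $C(2,1)=\tfrac{t-1}{\alpha\beta t^2-\gamma\delta}$, $C(1,1)=1$, and $\lambda_3=\alpha\beta t^2-\gamma\delta$ indeed combine to give the coefficient $t-1$.
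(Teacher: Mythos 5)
Your proof is correct and is exactly the computation the paper has in mind (the paper simply states the corollary as a rewriting of Theorem 3.2 without spelling it out): all subscript checks $||x||+||y||+2=N+r$, etc., the invariance of $r$ under deleting a $\black$ or $\white$, and the ratio $C(N,r)/C(N-1,r)=(t-1)/\lambda_{N+r}$ are verified correctly.
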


The following lemmas, which are straightforward to prove,
will be helpful for working with the operators.

\begin{lemma}
Let $1 \leq i \leq N-1$, and suppose that $G$ is a polynomial that is independent of $z_i$ and $z_{i+1}$.
Then we have the following.
\begin{align}
	\widetilde{T}_i \left( (z_i-1)\left(\frac{1}{z_{i+1}}-1\right) G\right)
	&=(z_{i+1}-1)\left(\frac{1}{z_{i}}-1\right) G.\label{eq:1}\\
	\widetilde{T}_i \left( (z_i-1) G \right)&=\left((z_{i+1}-1)-(t-1) \right) G.\label{eq:2}\\
		\widetilde{T}_i \left( \left(\frac{1}{z_{i+1}}-1\right) G\right)&=\left( \left(\frac{1}{z_{i}}-1\right)-(t-1)\right) G.\label{eq:3}\\
		\widetilde{T}_i (G) &= tG.\label{eq:4}
	\end{align}
\end{lemma}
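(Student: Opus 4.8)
The plan is to verify each of the four identities by direct substitution into the explicit form of the operator,
\[
\widetilde{T}_i = s_i + \frac{(t-1)(z_is_i - z_{i+1})}{z_i - z_{i+1}},
\]
using throughout the single hypothesis that $G$ is invariant under the transposition $s_i$, so that $s_i(G) = G$ and $s_i$ acts only on the prefactors in $z_i$ and $z_{i+1}$. In every case the essential point is that the numerator of the correction term, after substitution, turns out to be divisible by $z_i - z_{i+1}$, so the apparent pole cancels and the output is again a Laurent polynomial. No orthogonality or eigenfunction machinery is needed; this is purely an algebraic check.

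First I would dispose of \eqref{eq:4}: applying $\widetilde{T}_i$ to $G$ gives $s_i(G) + \frac{(t-1)(z_i s_i(G) - z_{i+1}G)}{z_i - z_{i+1}} = G + \frac{(t-1)(z_i - z_{i+1})G}{z_i - z_{i+1}} = tG$. For \eqref{eq:2} and \eqref{eq:3} the computation is parallel: writing $f = (z_i-1)G$ (respectively $f = (z_{i+1}^{-1}-1)G$), one has $s_i(f) = (z_{i+1}-1)G$ (respectively $(z_i^{-1}-1)G$), and in both cases the key algebraic identity is $z_i s_i(f) - z_{i+1}f = -(z_i - z_{i+1})G$. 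Dividing by $z_i - z_{i+1}$ then produces the correction $-(t-1)G$, which combines with the leading $s_i$ term to give exactly the stated right-hand sides.

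The only slightly less mechanical case is \eqref{eq:1}. Here I set $f = (z_i-1)(z_{i+1}^{-1}-1)G$, so that $s_i(f) = (z_{i+1}-1)(z_i^{-1}-1)G$. Using $z_i(z_i^{-1}-1) = 1-z_i$ and $z_{i+1}(z_{i+1}^{-1}-1) = 1-z_{i+1}$, one finds $z_i s_i(f) = (z_{i+1}-1)(1-z_i)G$ and $z_{i+1}f = (z_i-1)(1-z_{i+1})G$; since both of these equal $-(z_i-1)(z_{i+1}-1)G$, the difference $z_i s_i(f) - z_{i+1}f$ vanishes, so the entire correction term is zero and $\widetilde{T}_i(f) = s_i(f) = (z_{i+1}-1)(z_i^{-1}-1)G$, which is the claimed identity.

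There is essentially no obstacle beyond careful bookkeeping: the whole content of the lemma is that the divided-difference correction in $\widetilde{T}_i$ interacts with these particular prefactors so that the $z_i - z_{i+1}$ denominator always cancels. The vanishing of the correction in \eqref{eq:1} is the one point worth recording, since it is precisely this feature that will later let these identities reproduce the exchange relation \eqref{qkz3} when a $\black$ is moved past a $\white$ in the qKZ family.
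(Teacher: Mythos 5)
Your verification is correct in all four cases, and it is precisely the direct substitution argument that the paper has in mind when it calls these identities ``straightforward to prove'' and omits the details. Nothing further is needed.
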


\begin{lemma}\label{lem:T0,TN}
Suppose that $G$ is a polynomial that is independent of $z_1$. Then 
\begin{align}
q\widetilde{T}_0\left( (z_1^{-1}-1)G\right) &= (z_1-1)G+\frac{1-t}{\gamma}G. \label{eq:z1}\\
\widetilde{T}_0\left( G\right) &= t_0G. \label{eq:z10}
\end{align}
Suppose that $G$ is a polynomial that is independent of $z_N$. Then 
\begin{align}
\widetilde{T}_N\left( (z_N-1)G\right) &= (z_N^{-1}-1)G+\frac{1-t}{\delta}G.\label{eq:zN}\\
\widetilde{T}_N\left( G\right) &= t_N G \label{eq:zN0}
\end{align}
\end{lemma}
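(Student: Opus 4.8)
The plan is to prove all four identities by direct computation, exploiting the fact that $G$ is annihilated by the relevant simple reflection. For \eqref{eq:z10} and \eqref{eq:zN0} this is immediate: since $G$ is independent of $z_1$ we have $s_0 G = G$, so $(1-s_0)G = 0$ and the entire divided-difference term in $\widetilde{T}_0$ vanishes, leaving $\widetilde{T}_0 G = -\frac{ac}{q} G = t_0 G$ by \eqref{t0}. The identity \eqref{eq:zN0} follows in exactly the same way, using $s_N G = G$ and $t_N = -bd$.

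The two remaining identities require evaluating the divided difference on a genuine function of the boundary variable. For \eqref{eq:z1} I would set $f = (z_1^{-1}-1)G$ and compute $s_0 f = (q^{-1}z_1 - 1)G$, recalling that $s_0$ sends $z_1 \mapsto q z_1^{-1}$, so that $(1-s_0)f = (z_1^{-1} - q^{-1}z_1)G$. The key simplification is the factorization $z_1^{-1} - q^{-1}z_1 = -q^{-1}(z_1 - q z_1^{-1})$, which makes the difference quotient collapse to the constant $-q^{-1}G$. Substituting this back into $\widetilde{T}_0$ and expanding $\frac{(z_1-a)(z_1-c)}{z_1} = z_1 - (a+c) + ac\,z_1^{-1}$, the $z_1^{-1}$ terms cancel and one is left with $q\,\widetilde{T}_0 f = (z_1 + ac - a - c)G$.

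It then remains only to match this against the right-hand side of \eqref{eq:z1}, and here the change of variables \eqref{eq:change} does all the work: since $\gamma = \frac{1-t}{(a-1)(c-1)}$, we have $\frac{1-t}{\gamma} = (a-1)(c-1) = ac - a - c + 1$, whence $z_1 + ac - a - c = (z_1 - 1) + \frac{1-t}{\gamma}$, as claimed. The computation for \eqref{eq:zN} is entirely parallel, now with $f = (z_N-1)G$, $s_N f = (z_N^{-1}-1)G$, and the quotient $(1-s_N)f/(z_N - z_N^{-1}) = G$; expanding $\frac{(bz_N-1)(dz_N-1)}{z_N}$ and invoking $\frac{1-t}{\delta} = (b-1)(d-1)$ finishes the argument. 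These are all mechanical manipulations; the only point requiring a little care is the correct bookkeeping of the $s_0$-action, since it involves the extra factor $q$, together with the sign in the divided-difference collapse, which is precisely why \eqref{eq:z1} carries the leading factor of $q$ while \eqref{eq:zN} does not.
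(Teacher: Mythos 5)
Your proposal is correct and follows essentially the same route as the paper: both verify \eqref{eq:z10} and \eqref{eq:zN0} by noting the divided-difference term annihilates $G$, and both establish \eqref{eq:z1} and \eqref{eq:zN} by collapsing the difference quotient to a constant ($-q^{-1}G$ and $G$, respectively), expanding the rational prefactor, and matching constants via the change of variables \eqref{eq:change}. The only cosmetic difference is that the paper routes the final identification through the intermediate identities $ac=-\alpha/\gamma$, $a+c=-\frac{1-t+\alpha-\gamma}{\gamma}$ (and their $b,d$ analogues), whereas you read off $\frac{1-t}{\gamma}=(a-1)(c-1)$ directly; these are equivalent.
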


\begin{proof}
If $G$ is independent of $z_1$, then
\[
\frac{1-s_0}{z_1-q z_1^{-1}} G = 0,\qquad \frac{1-s_0}{z_1-q z_1^{-1}}(z_1^{-1}-1)G = \frac{z_1^{-1}-q^{-1}z_1}{z_1-q z_1^{-1}}G = -q^{-1}G.
\]
We also have 
\[
ac=-\frac{\alpha}{\gamma},\quad a+c=-\frac{1-t+\alpha-\gamma}{\gamma}, \quad bd=-\frac{\beta}{\delta}, \quad b+d=-\frac{1-t+\beta-\delta}{\delta}.
\]
Then 
\[
\widetilde{T}_0(G) = -acq^{-1} G = t_0G
\]
and
\begin{align*} 
q\widetilde{T}_0\left((z_1^{-1}-1)G\right)& = -ac(z_1^{-1}-1)G - (z_1-a-c+acz_1^{-1})(-G) \\
&= z_1G +(ac-a-c)G = (z_1-1)+\frac{1-t}{\gamma}G.
\end{align*}
If $G$ is independent of $z_N$, then
\[
\frac{1-s_N}{z_N-z_N^{-1}} G = 0,\qquad  \frac{1-s_N}{z_N-z_N^{-1}}(z_N-1)G = \frac{z_N-z_N^{-1}}{z_N-z_N^{-1}}G = G.
\]
Then 
\[
\widetilde{T}_N(G) = -bd G = t_N G
\]
and
\begin{align*} 
\widetilde{T}_N\left((z_N-1)G\right)& = -bd(z_N-1)G + (bdz_N-b-d+z_N^{-1})G \\
&= z_N^{-1}G +(bd-b-d)G = (z_N^{-1}-1)G+\frac{1-t}{\delta}G.
\end{align*}
\end{proof}

Our first goal is to prove the following.
\begin{theorem}\label{thm:1}
Choose $1 \leq i \leq N-1$.
For any words $x$ and $y$ in the letters $\{\white,\grey, \black\}$, where $|x|=i-1$, we have that 
$$\widetilde{T}_i(F_{x\black \white y}(\z;t)) = F_{x \white \black y}(\z;t).$$
\end{theorem}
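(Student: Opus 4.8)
The plan is to prove the identity $S'$-by-$S'$ after grouping the sum defining $F_{x\black\white y}$ according to how $S$ meets the pair $\{i,i+1\}$. First I would fix a subset $S'\subseteq[N]\setminus\{i,i+1\}$, write $G=\prod_{j\in S'}(z_j^{\mu_j}-1)$ (a Laurent polynomial independent of $z_i,z_{i+1}$), and let $\hat x,\hat y$ be the subwords of $x,y$ obtained by discarding the positions in $S'$. Since positions $i$ and $i+1$ both carry letters in $\{\black,\white\}$, hence lie in $V$, the four choices $S\cap\{i,i+1\}\in\{\varnothing,\{i\},\{i+1\},\{i,i+1\}\}$ all occur, and I would note at the outset that this means no terms are lost in passing between the $S\subseteq V$ and $S\subseteq[N]$ forms of \eqref{eq:phi}. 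Using $z_i^{\mu_i}-1=z_i-1$ (position $i$ is $\black$) and $z_{i+1}^{\mu_{i+1}}-1=\tfrac{1}{z_{i+1}}-1$ (position $i+1$ is $\white$), and the fact that dropping a position from $\overline S$ removes the corresponding letter from the argument of $\tilde{\R}$, the portion of $F_{x\black\white y}$ with $S\setminus\{i,i+1\}=S'$ equals $G$ times
\[
(z_i-1)\Big(\tfrac{1}{z_{i+1}}-1\Big)\tilde{\R}(\hat x\hat y)
+(z_i-1)\tilde{\R}(\hat x\white\hat y)
+\Big(\tfrac{1}{z_{i+1}}-1\Big)\tilde{\R}(\hat x\black\hat y)
+\tilde{\R}(\hat x\black\white\hat y).
\]

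Next I would apply $\widetilde{T}_i$. Because $\widetilde{T}_i$ acts only on $z_i,z_{i+1}$ and each $\tilde{\R}(\cdot)$ is a scalar in $\alpha,\beta,\gamma,\delta,t$, I can apply the identities \eqref{eq:1}--\eqref{eq:4} term by term (absorbing each $\tilde{\R}$-scalar into the polynomial called $G$ in the lemma). This turns the displayed bracket into
\[
(z_{i+1}-1)\Big(\tfrac{1}{z_i}-1\Big)\tilde{\R}(\hat x\hat y)
+\big((z_{i+1}-1)-(t-1)\big)\tilde{\R}(\hat x\white\hat y)
+\big((\tfrac{1}{z_i}-1)-(t-1)\big)\tilde{\R}(\hat x\black\hat y)
+t\,\tilde{\R}(\hat x\black\white\hat y).
\]

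On the other hand, carrying out the same grouping for $F_{x\white\black y}$---now with $z_i^{\mu_i}-1=\tfrac{1}{z_i}-1$ (position $i$ is $\white$) and $z_{i+1}^{\mu_{i+1}}-1=z_{i+1}-1$ (position $i+1$ is $\black$)---shows that its $S'$-portion is $G$ times
\[
\Big(\tfrac{1}{z_i}-1\Big)(z_{i+1}-1)\tilde{\R}(\hat x\hat y)
+\Big(\tfrac{1}{z_i}-1\Big)\tilde{\R}(\hat x\black\hat y)
+(z_{i+1}-1)\tilde{\R}(\hat x\white\hat y)
+\tilde{\R}(\hat x\white\black\hat y).
\]
Comparing the two brackets, the three terms carrying explicit factors of $z_i^{\pm1},z_{i+1}^{\pm1}$ agree, and the remaining equality is exactly
\[
t\,\tilde{\R}(\hat x\black\white\hat y)=\tilde{\R}(\hat x\white\black\hat y)+(t-1)\big(\tilde{\R}(\hat x\white\hat y)+\tilde{\R}(\hat x\black\hat y)\big),
\]
which is the Matrix Ansatz relation \eqref{eq:c1a} of \Cref{cor:MA}. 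Summing the matched $S'$-portions over all $S'\subseteq[N]\setminus\{i,i+1\}$ then yields $\widetilde{T}_i(F_{x\black\white y})=F_{x\white\black y}$.

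The only genuine input is the Matrix Ansatz relation, which is already established, so the main thing to get right is the subword bookkeeping: verifying that removing position $i$ or $i+1$ from $\overline S$ produces precisely the arguments $\hat x\white\hat y$, $\hat x\black\hat y$, and $\hat x\hat y$ appearing above, and that both sides are indexed by the same set of $S'$ so that the comparison can be done summand by summand. I expect this bookkeeping, rather than any analytic difficulty, to be the only delicate point, since once the four cases are laid out correctly the computation collapses onto \eqref{eq:c1a}.
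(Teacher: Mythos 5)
Your proposal is correct and follows essentially the same route as the paper's proof: the four-way split of the sum according to $S\cap\{i,i+1\}$, the term-by-term application of identities \eqref{eq:1}--\eqref{eq:4}, and the final collapse onto the Matrix Ansatz relation \eqref{eq:c1a}. The only cosmetic difference is that you match contributions for each fixed $S'$ individually rather than keeping the sums over $S'$ throughout, which changes nothing of substance.
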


\begin{proof}
	Let $\mu=(\mu_1,\dots,\mu_N)=x\black\white y$ and $|x|=i-1$, so that the $\black$ and $\white$ are in positions
$i$ and $i+1$.  
As before, let $V=\{i \ \vert \ \mu_i \in \{\pm 1\}\}$.
	
	In the definition of $F_{\mu}(\z;t)$, we will divide  the sum
	over subsets of $[N]$ into four cases based on whether the subset
	contains both $i$ and $i+1$, just one of them, or neither.  We then get:
\begin{eqnarray*}
	F_{x\black\white y}(\z;t)&=&(z_i-1)\left(\frac{1}{z_{i+1}}-1\right)\sum_{S\subseteq V\setminus \{i,i+1\}}\prod_{j\in S}(z_j^{\mu_j}-1) \tilde{\R}((x\vert_{\overline{S}}) (y\vert_{\overline{S}}))\\
	&&+(z_i-1)\sum_{S\subseteq V\setminus \{i,i+1\}}\prod_{i\in S}(z_i^{\mu_i}-1) \tilde{\R}((x\vert_{\overline{S}}) \white (y\vert_{\overline{S}}))\\
	&&+\left(\frac{1}{z_{i+1}}-1\right)\sum_{S\subseteq V\setminus \{i,i+1\}}\prod_{j\in S}(z_j^{\mu_j}-1) \tilde{\R}((x\vert_{\overline{S}})\black (y\vert_{\overline{S}}))\\
	&& +\sum_{S\subseteq V\setminus \{i,i+1\}}\prod_{j\in S}(z_j^{\mu_j}-1) \tilde{\R}((x\vert_{\overline{S}})\black\white (y\vert_{\overline{S}})).
\end{eqnarray*}

Using \eqref{eq:1}, we get
\begin{eqnarray*}
	&\widetilde{T}_i &\left(  (z_i-1)\left(\frac{1}{z_{i+1}}-1\right)\sum_{S\subseteq V\setminus \{i,i+1\}}\prod_{j\in S}(z_j^{\mu_j}-1) \tilde{\R}((x\vert_{\overline{S}})(y\vert_{\overline{S}})) \right) =\\
&&(z_{i+1}-1)\left(\frac{1}{z_{i}}-1\right)\sum_{S\subseteq V\setminus \{i,i+1\}}\prod_{j\in S}(z_j^{\mu_j}-1) \tilde{\R}((x\vert_{\overline{S}})(y\vert_{\overline{S}})). 
\end{eqnarray*}
Using \eqref{eq:2}, we get
\begin{eqnarray*}
	&&\widetilde{T}_i \left( (z_i-1)\sum_{S\subseteq V\setminus \{i,i+1\}}\prod_{j\in S}(z_j^{\mu_j}-1) \tilde{\R}((x\vert_{\overline{S}})(y\vert_{\overline{S}})\right) =\\
&&(z_{i+1}-1)\sum_{S\subseteq V\setminus \{i,i+1\}}\prod_{j\in S}(z_j^{\mu_j}-1) \tilde{\R}((x\vert_{\overline{S}})\white(y\vert_{\overline{S}}))\\
&&-(t-1) \sum_{S\subseteq V\setminus \{i,i+1\}}\prod_{j\in S}(z_j^{\mu_j}-1) \tilde{\R}((x\vert_{\overline{S}})\white(y\vert_{\overline{S}})).
\end{eqnarray*}

Using \eqref{eq:3}, we get
\begin{eqnarray*}
&&
	\widetilde{T}_i \left( \left(\frac{1}{z_{i+1}}-1\right)\sum_{S\subseteq V\setminus \{i,i+1\}}\prod_{j\in S}(z_j^{\mu_j}-1) \tilde{\R}((x\vert_{\overline{S}})(y\vert_{\overline{S}})) \right) =\\
&&\left(\frac{1}{z_{i}}-1\right)\sum_{S\subseteq V\setminus \{i,i+1\}}\prod_{j\in S}(z_j^{\mu_j}-1) \tilde{\R}((x\vert_{\overline{S}})\black(y\vert_{\overline{S}}))\\
&&-(t-1) \sum_{S\subseteq V\setminus \{i,i+1\}}\prod_{j\in S}(z_j^{\mu_j}-1) \tilde{\R}((x\vert_{\overline{S}}
)\black(y\vert_{\overline{S}})).
\end{eqnarray*}

Using \eqref{eq:4}, we get 
\begin{eqnarray*}
	\widetilde{T}_i \left( \sum_{S\subseteq V\setminus \{i,i+1\}}\prod_{j\in S}(z_j^{\mu_j}-1) \tilde{\R}(\mu\vert_{\overline{S}}) \right)=
t \sum_{S\subseteq V\setminus \{i,i+1\}}\prod_{j\in S}(z_j^{\mu_j}-1) \tilde{\R}(\mu\vert_{\overline{S}}).
\end{eqnarray*}

Therefore 
 \begin{eqnarray*}
\widetilde{T}_i F_{x\black\white y}(\z;t)&=& 
(z_{i+1}-1)\left(\frac{1}{z_{i}}-1\right)\sum_{S\subseteq V\setminus \{i,i+1\}}\prod_{j\in S}(z_j^{\mu_j}-1) \tilde{\R}((x\vert_{\overline{S}})(y\vert_{\overline{S}}))\\ 
&& + (z_{i+1}-1)\sum_{S\subseteq V\setminus \{i,i+1\}}\prod_{j\in S}(z_j^{\mu_j}-1) 
\tilde{\R}((x\vert_{\overline{S}})\white(y\vert_{\overline{S}}))\\
&& +\left(\frac{1}{z_{i}}-1\right)\sum_{S\subseteq V\setminus \{i,i+1\}}\prod_{j\in S}(z_j^{\mu_j}-1) \tilde{\R}((x\vert_{\overline{S}})\black(y\vert_{\overline{S}}))\\
&&+ \sum_{S\subseteq V\setminus \{i,i+1\}}\prod_{j\in S}(z_j^{\mu_j}-1) (t\tilde{\R}((x\vert_{\overline{S}})\black\white(y\vert_{\overline{S}}))\\
&&-(t-1)
	 (\tilde{\R}((x\vert_{\overline{S}})\black(y\vert_{\overline{S}}))+\tilde{\R}((x\vert_{\overline{S}})\white(y\vert_{\overline{S}}))).
\end{eqnarray*}

Now we use \eqref{eq:c1a}  
 and get that
\[
t\tilde{\R}((x\vert_{\overline{S}})\black\white(y\vert_{\overline{S}}))-(t-1)
(\tilde{\R}((x\vert_{\overline{S}})\black(y\vert_{\overline{S}}))+\tilde{\R}((x\vert_{\overline{S}})\white(y\vert_{\overline{S}})))=\tilde{\R}((x\vert_{\overline{S}})\white\black(y\vert_{\overline{S}})).
\]

So
\begin{eqnarray*}
\widetilde{T}_i F_{x\black\white y}(\z;t)&=& 
	\left(\frac{1}{z_{i}}-1\right)
(z_{i+1}-1)
	\sum_{S\subseteq V\setminus \{i,i+1\}}\prod_{j\in S}(z_j^{\mu_j}-1) \tilde{\R}((x\vert_{\overline{S}})(y\vert_{\overline{S}}))\\ 
&& + (z_{i+1}-1)\sum_{S\subseteq V\setminus \{i,i+1\}}\prod_{j\in S}(z_j^{\mu_j}-1) \tilde{\R}((x\vert_{\overline{S}})\white(y\vert_{\overline{S}}))\\
&& +\left(\frac{1}{z_{i}}-1\right)\sum_{S\subseteq V\setminus \{i,i+1\}}\prod_{j\in S}(z_j^{\mu_j}-1) \tilde{\R}((x\vert_{\overline{S}})\black(y\vert_{\overline{S}}))\\
&&+ \sum_{S\subseteq V\setminus \{i,i+1\}}\prod_{j\in S}(z_j^{\mu_j}-1) \tilde{\R}((x\vert_{\overline{S}})\white\black(y\vert_{\overline{S}}))\\
&=&  F_{x\white\black y}(\z;t),
\end{eqnarray*}
as desired.
\end{proof}

We also need to prove the following identities.

\begin{theorem}\label{thm:2}
Choose $1 \leq i \leq N-1$.
For any words $x$ and $y$ in the letters $\{\white,\grey, \black\}$, 
	where $|x|=i-1$, we have that 
\begin{equation}\label{xcirc}
\widetilde{T}_i(F_{x\grey \white y}(\z;t)) = F_{x \white \grey y}(\z;t)
\end{equation}
and
\begin{equation}\label{bulletx}
\widetilde{T}_i(F_{x\black \grey y}(\z;t)) = F_{x \grey \black y}(\z;t).
\end{equation}
\end{theorem}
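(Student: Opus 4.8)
The plan is to prove \Cref{thm:2} in direct parallel with the proof of \Cref{thm:1}, exploiting the fact that both identities involve a $\grey$ (i.e.\ a $0$) adjacent to a $\pm 1$. The key structural difference from \Cref{thm:1} is that a $\grey$ in position $i$ or $i+1$ contributes \emph{no} spectral factor $(z_j^{\mu_j}-1)$ to $F_\mu$, since the index set $V$ only records positions where $\mu_j \in \{\pm 1\}$. Thus when I expand $F_{x\grey\white y}(\z;t)$ by splitting the sum over $S\subseteq V$ according to whether $i+1 \in S$ (the only one of the two positions $i,i+1$ that lies in $V$), I get only \emph{two} groups of terms rather than four: one carrying the factor $(z_{i+1}^{-1}-1)$ and one with no factor from these two positions.

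Concretely, for \eqref{xcirc} I would write
\[
F_{x\grey\white y}(\z;t) = \left(\frac{1}{z_{i+1}}-1\right)\!\!\sum_{S}\prod_{j\in S}(z_j^{\mu_j}-1)\,\tilde{\R}((x\vert_{\overline S})\grey(y\vert_{\overline S})) + \sum_{S}\prod_{j\in S}(z_j^{\mu_j}-1)\,\tilde{\R}((x\vert_{\overline S})\grey\white(y\vert_{\overline S})),
\]
where $S$ ranges over subsets of $V\setminus\{i,i+1\}$. Applying $\widetilde{T}_i$ via \eqref{eq:3} to the first group and \eqref{eq:4} to the second, the first group produces a term $\left(\frac{1}{z_i}-1\right)\tilde{\R}(\cdots\grey\cdots)$ together with a leftover $-(t-1)\tilde{\R}(\cdots\grey\cdots)$, while the second group becomes $t\,\tilde{\R}(\cdots\grey\white\cdots)$. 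I then invoke the Matrix-Ansatz identity \eqref{eq:c1b}, namely $t\tilde{\R}(x\grey\white y)=\tilde{\R}(x\white\grey y)+(t-1)\tilde{\R}(x\grey y)$, to combine $t\,\tilde{\R}(\cdots\grey\white\cdots) - (t-1)\tilde{\R}(\cdots\grey\cdots) = \tilde{\R}(\cdots\white\grey\cdots)$. After this substitution the two surviving groups reassemble exactly into $F_{x\white\grey y}(\z;t)$, since now position $i$ carries the $\grey$ (no spectral factor) and position $i+1$ carries the $\white$ contributing $\left(\frac{1}{z_i}-1\right)$ after the swap.

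The proof of \eqref{bulletx} is the mirror image: I split $F_{x\black\grey y}$ according to whether $i \in S$, obtaining a group with factor $(z_i-1)$ and a group with no factor, then apply \eqref{eq:2} and \eqref{eq:4} respectively, and finally use \eqref{eq:c1c}, namely $t\tilde{\R}(x\black\grey y)=\tilde{\R}(x\grey\black y)+(t-1)\tilde{\R}(x\grey y)$, to close the computation into $F_{x\grey\black y}$. I expect no genuine obstacle here, since all the analytic content is already packaged in \Cref{cor:MA} and the operator lemmas; the only point requiring care is bookkeeping the index sets, specifically verifying that exactly one of $i,i+1$ lies in $V$ in each case and that the restrictions $(x\vert_{\overline S})$, $(y\vert_{\overline S})$ are unaffected by whether the $\grey$ position is included in $S$ (it cannot be, as $\grey \notin \{\pm 1\}$). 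Matching the surviving spectral prefactors $\left(\frac{1}{z_i}-1\right)$ and $(z_i-1)$ against the definition of $F$ for the swapped word is the step most prone to a sign or index slip, so that is where I would be most careful.
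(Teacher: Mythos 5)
Your proposal is correct and follows essentially the same route as the paper: the same two-part decomposition of the sum according to whether the unique $\pm 1$ position among $\{i,i+1\}$ lies in $S$, the same operator identities \eqref{eq:2}, \eqref{eq:3}, \eqref{eq:4}, and the same Matrix-Ansatz relations \eqref{eq:c1b} and \eqref{eq:c1c} to close the computation. The paper only writes out \eqref{xcirc} in full and dismisses \eqref{bulletx} as analogous, whereas you correctly identify the mirrored bookkeeping for that case as well.
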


\begin{proof}
We prove \eqref{xcirc}. Let $\mu=(\mu_1,\dots,\mu_N)=x\grey\white y$ and $|x|=i-1$, so that the $\grey$ and $\white$ are in positions
$i$ and $i+1$. As before, let $V=\{i \ \vert \ \mu_i \in \{\pm 1\}\}$.
	
We divide the sum in the definition of $F_{\mu}(\z;t)$
	over subsets of $[N]$ into two cases based on whether the subset
	contains $i+1$ or not.  We then get:
\begin{eqnarray*}
	F_{x\grey\white y}(\z;t)&=&\left(z_{i+1}^{-1}-1\right)\sum_{S\subseteq V\setminus \{i+1\}}\prod_{j\in S}(z_j^{\mu_j}-1) \tilde{\R}((x\vert_{\overline{S}})\grey(y\vert_{\overline{S}}))\\
	&& +\sum_{S\subseteq V\setminus \{i+1\}}\prod_{j\in S}(z_j^{\mu_j}-1) \tilde{\R}((x\vert_{\overline{S}})\grey\white(y\vert_{\overline{S}})).
\end{eqnarray*}

Applying \eqref{eq:3} and \eqref{eq:4} to the two sums in $\widetilde{T}_i(F_{\mu}(\z;t))$, we get
\begin{align*}
\widetilde{T}_i(F_{x\grey\white y}(\z;t))=&\left(z_{i}^{-1}-1\right)\sum_{S\subseteq V\setminus \{i+1\}}\prod_{j\in S}(z_j^{\mu_j}-1) \tilde{\R}((x\vert_{\overline{S}})\grey(y\vert_{\overline{S}}))\\
&-(t-1) \sum_{S\subseteq V\setminus \{i+1\}}\prod_{j\in S}(z_j^{\mu_j}-1) \tilde{\R}((x\vert_{\overline{S}})\grey(y\vert_{\overline{S}}))\\
&+t \sum_{S\subseteq V\setminus \{i+1\}}\prod_{j\in S}(z_j^{\mu_j}-1) \tilde{\R}((x\vert_{\overline{S}})\grey\white(y\vert_{\overline{S}}))\\
=&\left(z_{i}^{-1}-1\right)\sum_{S\subseteq V\setminus \{i+1\}}\prod_{j\in S}(z_j^{\mu_j}-1) \tilde{\R}((x\vert_{\overline{S}})\grey(y\vert_{\overline{S}}))\\
\quad+\sum_{S\subseteq V\setminus \{i+1\}}&\prod_{j\in S}(z_j^{\mu_j}-1) ( t \tilde{\R}((x\vert_{\overline{S}})\grey\white(y\vert_{\overline{S}})) -(t-1)\tilde{\R}((x\vert_{\overline{S}})\grey(y\vert_{\overline{S}}))).
\end{align*}

Using \eqref{eq:c1b}, we get  
\begin{eqnarray*}
\widetilde{T}_i(F_{x\grey\white y}(\z;t))=
&&\left(z_{i}^{-1}-1\right)\sum_{S\subseteq V\setminus \{i+1\}}\prod_{j\in S}(z_j^{\mu_j}-1) \tilde{\R}((x\vert_{\overline{S}})\grey(y\vert_{\overline{S}}))\\
&&+\sum_{S\subseteq V\setminus \{i+1\}}\prod_{j\in S}(z_j^{\mu_j}-1) \tilde{\R}((x\vert_{\overline{S}})\white\grey(y\vert_{\overline{S}})),
\end{eqnarray*}
which is equal to $F_{x\white\grey y}(\z;t)$ when written as a sum of the two cases depending on whether or not $i$ is in $S$.

\eqref{bulletx} is proved in the same manner.
\end{proof}

\begin{theorem}\label{thm:4}
For any word $x$ in the letters $\{\white,\grey, \black\}$ of length $|x|=N-1$, we have that 
\[
q\widetilde{T}_0(F_{\white x}(\z;t)) = F_{\black x}(\z;t).
\] 
\end{theorem}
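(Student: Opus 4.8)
The plan is to follow the same strategy used in \Cref{thm:1} and \Cref{thm:2}: expand $F_{\white x}(\z;t)$ according to whether the index $1$ belongs to the subset $S$, apply the operator $\widetilde{T}_0$ term by term using \Cref{lem:T0,TN}, and then recombine using the appropriate Matrix Ansatz relation from \Cref{cor:MA} to recover $F_{\black x}(\z;t)$. The relevant relation here will be \eqref{eq:c1d}, namely $\alpha\tilde{\R}(\white x) = \gamma\tilde{\R}(\black x) + (t-1)\tilde{\R}(x)$, which governs the interaction of a leftmost $\white$ or $\black$ with the left boundary.

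First I would write, with $\mu = \white x$ and $V = \{i \mid \mu_i \in \{\pm 1\}\}$ (so $1 \in V$),
\begin{equation*}
F_{\white x}(\z;t) = (z_1^{-1}-1)\sum_{S\subseteq V\setminus\{1\}}\prod_{j\in S}(z_j^{\mu_j}-1)\,\tilde{\R}(x\vert_{\overline{S}}) + \sum_{S\subseteq V\setminus\{1\}}\prod_{j\in S}(z_j^{\mu_j}-1)\,\tilde{\R}(\white\,(x\vert_{\overline{S}})),
\end{equation*}
splitting on whether $1 \in S$. The two inner sums are polynomials independent of $z_1$, so I can apply \eqref{eq:z1} to the first sum and \eqref{eq:z10} to the second. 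This yields, after multiplying by $q$,
\begin{align*}
q\widetilde{T}_0(F_{\white x}(\z;t)) = {}&(z_1-1)\sum_{S\subseteq V\setminus\{1\}}\prod_{j\in S}(z_j^{\mu_j}-1)\,\tilde{\R}(x\vert_{\overline{S}}) \\
&+ \frac{1-t}{\gamma}\sum_{S\subseteq V\setminus\{1\}}\prod_{j\in S}(z_j^{\mu_j}-1)\,\tilde{\R}(x\vert_{\overline{S}}) \\
&+ q\,t_0\sum_{S\subseteq V\setminus\{1\}}\prod_{j\in S}(z_j^{\mu_j}-1)\,\tilde{\R}(\white\,(x\vert_{\overline{S}})).
\end{align*}
Here I would substitute $q\,t_0 = -ac = \alpha/\gamma$, using \eqref{t0} together with the relation $ac = -\alpha/\gamma$ recorded in the proof of \Cref{lem:T0,TN}. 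The key algebraic step is then to combine the last two sums termwise: the coefficient of $\prod_{j\in S}(z_j^{\mu_j}-1)$ becomes $\tfrac{1}{\gamma}\bigl(\alpha\tilde{\R}(\white\,(x\vert_{\overline{S}})) + (1-t)\tilde{\R}(x\vert_{\overline{S}})\bigr)$, which by \eqref{eq:c1d} equals exactly $\tilde{\R}(\black\,(x\vert_{\overline{S}}))$.

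After this substitution the expression reads
\begin{equation*}
q\widetilde{T}_0(F_{\white x}) = (z_1-1)\sum_{S\subseteq V\setminus\{1\}}\prod_{j\in S}(z_j^{\mu_j}-1)\,\tilde{\R}(x\vert_{\overline{S}}) + \sum_{S\subseteq V\setminus\{1\}}\prod_{j\in S}(z_j^{\mu_j}-1)\,\tilde{\R}(\black\,(x\vert_{\overline{S}})),
\end{equation*}
which is precisely the expansion of $F_{\black x}(\z;t)$ obtained by splitting on whether $1 \in S$ (noting that for $\nu = \black x$ we have $\nu_1 = 1$, so the factor $z_1^{\nu_1}-1 = z_1-1$ appears). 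I expect the main obstacle to be purely bookkeeping rather than conceptual: one must verify carefully that the parameter identity $q\,t_0 = \alpha/\gamma$ holds under the change of variables \eqref{eq:change}, and that the sign conventions in \eqref{eq:c1d} align so that the factor $\tfrac{1}{\gamma}(1-t) = -\tfrac{(t-1)}{\gamma}$ combines correctly—getting these scalars right is where an error would most easily creep in. The structural parallel with \Cref{thm:2} makes the overall argument routine once the boundary scalars are pinned down.
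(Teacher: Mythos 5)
Your proposal is correct and follows essentially the same route as the paper's proof: split the sum defining $F_{\white x}$ according to whether $1\in S$, apply \eqref{eq:z1} and \eqref{eq:z10}, note that $qt_0=-ac=\alpha/\gamma$, and recombine the two $z_1$-free sums via \eqref{eq:c1d} to produce $\tilde{\R}(\black(x\vert_{\overline{S}}))$, recovering the expansion of $F_{\black x}$. All the boundary scalars you flagged check out exactly as you anticipated.
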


\begin{proof}
Let $\mu=(\mu_1,\dots,\mu_N)=\white x$ and $|x|=N-1$, so that $\white$ is in position
$1$. As before, let $V=\{i \ \vert \ \mu_i \in \{\pm 1\}\}$.
	
We divide the sum over subsets of $[N]$ in the definition of $F_{\mu}(\z;t)$ into two cases based on whether the subset contains $1$ or not.  We get:
\begin{eqnarray*}
	F_{\white x}(\z;t)&=&\left(z_1^{-1}-1\right)\sum_{S\subseteq V\setminus \{1\}}\prod_{j\in S}(z_j^{\mu_j}-1) \tilde{\R}(x\vert_{\overline{S}})\\
	&& +\sum_{S\subseteq V\setminus \{1\}}\prod_{j\in S}(z_j^{\mu_j}-1) \tilde{\R}(\white(x\vert_{\overline{S}})).
\end{eqnarray*}
Thus
\begin{eqnarray*}
	q\widetilde{T}_0(F_{\white x}(\z;t))&=&\left(z_1-1\right)\sum_{S\subseteq V\setminus \{1\}}\prod_{j\in S}(z_j^{\mu_j}-1) \tilde{\R}(x\vert_{\overline{S}})\\
	&&+\frac{1-t}{\gamma}\sum_{S\subseteq V\setminus \{1\}}\prod_{j\in S}(z_j^{\mu_j}-1) \tilde{\R}(x\vert_{\overline{S}})\\
	&& +\frac{\alpha}{\gamma} \sum_{S\subseteq V\setminus \{1\}}\prod_{j\in S}(z_j^{\mu_j}-1) \tilde{\R}(\white(x\vert_{\overline{S}})).
\end{eqnarray*}

Now we use \eqref{eq:c1d} to get that 
\[
\tilde{\R}(\black (x\vert \overline{S})) = \frac{1}{\gamma}\left( \alpha \tilde{\R}(\white(x\vert_{\overline{S}})) +(1-t)\tilde{\R}(x\vert_{\overline{S}})\right).
\]
Then 
\begin{eqnarray*}
	q\widetilde{T}_0(F_{\white x}(\z;t))&=&\left(z_1-1\right)\sum_{S\subseteq V\setminus \{1\}}\prod_{j\in S}(z_j^{\mu_j}-1) \tilde{\R}(x\vert_{\overline{S}})\\
	&&+\sum_{S\subseteq V\setminus \{1\}}\prod_{j\in S}(z_j^{\mu_j}-1) \tilde{\R}(x\vert_{\overline{S}})\left(\frac{1-t}{\gamma}+\frac{\alpha}{\gamma}\right)\\
	&=&F_{\black x}(\z;t).
\end{eqnarray*}
\end{proof}

\begin{theorem}\label{thm:5}
For any word $x$ in the letters $\{\white,\grey, \black\}$ of length $|x|=N-1$, we have that 
\[
\widetilde{T}_N(F_{x \black}(\z;t)) = F_{x \white}(\z;t).
\] 
\end{theorem}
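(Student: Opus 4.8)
The plan is to mirror the proof of \Cref{thm:4}, replacing the left-boundary operator $\widetilde{T}_0$ by the right-boundary operator $\widetilde{T}_N$ and invoking the Matrix Ansatz relation \eqref{eq:c1e} in place of \eqref{eq:c1d}. First I would set $\mu=(\mu_1,\dots,\mu_N)=x\black$, so that $\black$ occupies position $N$, and let $V=\{i \mid \mu_i \in \{\pm 1\}\}$. Splitting the sum defining $F_{x\black}(\z;t)$ according to whether $N\in S$ (in which case $\mu_N=1$ contributes a factor $z_N-1$ and position $N$ is deleted from the shape) or $N\notin S$ (in which case the shape ends in $\black$) gives
\[
F_{x\black}(\z;t)=(z_N-1)\,A+B,
\]
where $A=\sum_{S\subseteq V\setminus\{N\}}\prod_{j\in S}(z_j^{\mu_j}-1)\,\tilde{\R}(x\vert_{\overline{S}})$ and $B=\sum_{S\subseteq V\setminus\{N\}}\prod_{j\in S}(z_j^{\mu_j}-1)\,\tilde{\R}((x\vert_{\overline{S}})\black)$ are both independent of $z_N$.

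Next I would apply $\widetilde{T}_N$ termwise using the two identities \eqref{eq:zN} and \eqref{eq:zN0}: the former sends $(z_N-1)A$ to $(z_N^{-1}-1)A+\tfrac{1-t}{\delta}A$, and the latter sends $B$ to $t_N B$. Recalling the computation $bd=-\beta/\delta$, so that $t_N=-bd=\beta/\delta$, this yields
\[
\widetilde{T}_N(F_{x\black}(\z;t))=(z_N^{-1}-1)\,A+\frac{1-t}{\delta}\,A+\frac{\beta}{\delta}\,B.
\]

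Finally I would expand the target $F_{x\white}(\z;t)$ in the same fashion. Since $x$ and $V$ are unchanged and position $N$ now carries $\white$ (so that $\mu_N=-1$ contributes $z_N^{-1}-1$), the $N\in S$ part of $F_{x\white}(\z;t)$ is exactly $(z_N^{-1}-1)A$, matching the first term above. It then remains to verify, summand by summand, the identity
\[
\frac{1-t}{\delta}\,\tilde{\R}(x\vert_{\overline{S}})+\frac{\beta}{\delta}\,\tilde{\R}((x\vert_{\overline{S}})\black)=\tilde{\R}((x\vert_{\overline{S}})\white);
\]
clearing the denominator $\delta$ and rearranging, this is precisely the relation \eqref{eq:c1e} with the word $x\vert_{\overline{S}}$ in place of $x$, which completes the identification $\widetilde{T}_N(F_{x\black}(\z;t))=F_{x\white}(\z;t)$. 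The argument is entirely routine and symmetric to \Cref{thm:4}; the only points requiring care are tracking which factor $z_N^{\mu_N}-1$ each configuration contributes under the passage from $\black$ to $\white$, and the correct evaluation $t_N=\beta/\delta$, so I anticipate no genuine obstacle.
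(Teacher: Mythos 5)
Your proposal is correct and follows essentially the same route as the paper's proof: split the defining sum of $F_{x\black}$ according to whether $N\in S$, apply \eqref{eq:zN} and \eqref{eq:zN0} (with $t_N=-bd=\beta/\delta$), and absorb the leftover terms via the Matrix Ansatz relation \eqref{eq:c1e}. In fact your writeup states the final recombination $\tfrac{1-t}{\delta}\tilde{\R}(x\vert_{\overline{S}})+\tfrac{\beta}{\delta}\tilde{\R}((x\vert_{\overline{S}})\black)=\tilde{\R}((x\vert_{\overline{S}})\white)$ more cleanly than the paper's displayed equations do.
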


\begin{proof}
Let $\mu=(\mu_1,\dots,\mu_N)=x\black$ and $|x|=N-1$, so that $\black$ is in position
$N$. As before, let $V=\{i \ \vert \ \mu_i \in \{\pm 1\}\}$.
	
We divide the sum over subsets of $[N]$ in the definition of $F_{\mu}(\z;t)$ into two cases based on whether the subset contains $N$ or not.  We get:
\begin{eqnarray*}
	F_{x\black}(\z;t)&=&\left(z_N-1\right)\sum_{S\subseteq V\setminus \{N\}}\prod_{j\in S}(z_j^{\mu_j}-1) \tilde{\R}(x\vert_{\overline{S}})\\
	&& +\sum_{S\subseteq V\setminus \{N\}}\prod_{j\in S}(z_j^{\mu_j}-1) \tilde{\R}((x\vert_{\overline{S}})\black).
\end{eqnarray*}
Thus
\begin{eqnarray*}
	\widetilde{T}_N(F_{x\black}(\z;t))&=&\left(z_N^{-1}-1\right)\sum_{S\subseteq V\setminus \{N\}}\prod_{j\in S}(z_j^{\mu_j}-1) \tilde{\R}(x\vert_{\overline{S}})\\
	&&+\frac{1-t}{\delta}\sum_{S\subseteq V\setminus \{N\}}\prod_{j\in S}(z_j^{\mu_j}-1) \tilde{\R}(x\vert_{\overline{S}})\\
	&& +\frac{\beta}{\delta} \sum_{S\subseteq V\setminus \{N\}}\prod_{j\in S}(z_j^{\mu_j}-1) \tilde{\R}(x\vert_{\overline{S}}).
\end{eqnarray*}

Now we use \eqref{eq:c1e} to get that 
\[
\tilde{\R}(x\vert_{\overline{S}}) = \frac{1}{\delta}\left( \beta \tilde{\R}((x\vert_{\overline{S}})\black) +(1-t)\tilde{\R}(x\vert_{\overline{S}})\right).
\]
Then 
\begin{eqnarray*}
	\widetilde{T}_N(F_{x\black}(\z;t))&=&\left(z_N^{-1}-1\right)\sum_{S\subseteq V\setminus \{N\}}\prod_{j\in S}(z_j^{\mu_j}-1) \tilde{\R}(x\vert_{\overline{S}})\\
	&&+\sum_{S\subseteq V\setminus \{N\}}\prod_{j\in S}(z_j^{\mu_j}-1) \tilde{\R}(x\vert_{\overline{S}})\left(\frac{1-t}{\delta}+\frac{\beta}{\delta}\right)\\
	&=&F_{x\white}(\z;t).
\end{eqnarray*}
\end{proof}

We also prove the following identity. 

\begin{thm}\label{thm:6}
Choose words $x$ and $y$ in the letters $\{\white,\grey,\black\}$ where $|x|=i-1$ and 
additionally choose  $a\in\{\white,\grey,\black\}$.  Then

\begin{equation}\label{eq:aa}
\widetilde{T}_i(F_{x a a y}(\z;t)) = tF_{x a a y}(\z;t).
\end{equation}
\end{thm}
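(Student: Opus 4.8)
The plan is to reduce all three cases $a\in\{\black,\white,\grey\}$ to a single structural observation: the Laurent polynomial $F_{xaay}(\z;t)$ is invariant under the transposition $s_i$ that interchanges $z_i$ and $z_{i+1}$. Once this symmetry is in hand, \eqref{eq:aa} is immediate from the definition of the Noumi operator. Writing $\widetilde{T}_i = s_i + \frac{(t-1)(z_is_i-z_{i+1})}{z_i-z_{i+1}}$ and applying it to any $G$ with $s_iG=G$ gives $\widetilde{T}_iG = G + \frac{(t-1)(z_i-z_{i+1})}{z_i-z_{i+1}}G = tG$, which is exactly the extension of \eqref{eq:4} from functions independent of $z_i,z_{i+1}$ to all $s_i$-symmetric functions. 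So the entire theorem will follow once symmetry is verified in each case.

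I would dispose of the case $a=\grey$ first, where it is essentially trivial. Since $\mu_i=\mu_{i+1}=0$, neither $i$ nor $i+1$ lies in $V=\{j \ \vert \ \mu_j\in\{\pm1\}\}$, so these indices never belong to a subset $S\subseteq V$ in the defining sum \eqref{eq:phi}. Consequently every monomial $\prod_{j\in S}(z_j^{\mu_j}-1)$ is free of $z_i$ and $z_{i+1}$, and $F_{x\grey\grey y}(\z;t)$ is literally independent of both variables; symmetry holds, and in fact \eqref{eq:4} applies directly to give $\widetilde{T}_i F_{x\grey\grey y}=tF_{x\grey\grey y}$.

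For $a\in\{\black,\white\}$ I would split the sum over $S\subseteq V$ in \eqref{eq:phi} into four parts according to whether $S$ contains both, exactly one, or neither of $i,i+1$, and reindex by $S'=S\setminus\{i,i+1\}\subseteq V\setminus\{i,i+1\}$. The ``both'' term carries the factor $(z_i^{\mu_i}-1)(z_{i+1}^{\mu_i}-1)$ (using $\mu_i=\mu_{i+1}$), which is symmetric in $z_i,z_{i+1}$, while the ``neither'' term is independent of both variables. The crucial point is that the two singleton terms share the same tableau coefficient: because $\mu_i=\mu_{i+1}=a$, deleting position $i$ or position $i+1$ from $\mu\vert_{\overline{S'}}$ produces the identical subword $(x\vert_{\overline{S'}})\,a\,(y\vert_{\overline{S'}})$, with coefficient $\tilde{\R}((x\vert_{\overline{S'}})\,a\,(y\vert_{\overline{S'}}))$. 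Hence the two singleton contributions combine into $\big[(z_i^{\mu_i}-1)+(z_{i+1}^{\mu_i}-1)\big]$ times an $s_i$-invariant factor, which is again symmetric. Summing the four parts shows $s_iF_{xaay}=F_{xaay}$, completing the case.

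The main obstacle — really the only nontrivial point — is recognizing and justifying the equality of the two singleton tableau coefficients, i.e.\ that deleting the $i$-th letter of $\mu\vert_{\overline{S'}}$ and deleting the $(i+1)$-th letter yield the same word precisely because those letters coincide. This is what forces the singleton terms to assemble into a symmetric expression rather than a transposition, and it is exactly the feature that separates the present ``equal-letter'' relation \eqref{qkz2} from the ``sorting'' relations of \Cref{thm:1} and \Cref{thm:2}, where the two letters differ and one instead invokes the Matrix Ansatz identities of \Cref{cor:MA}. An alternative route, closer to the bookkeeping of those earlier proofs, is to expand $\widetilde{T}_iF_{xaay}$ term by term using \eqref{eq:1}--\eqref{eq:4}; the cross terms produced by \eqref{eq:2} and \eqref{eq:3} then cancel in pairs and reproduce $tF_{xaay}$, but the symmetry argument bypasses this computation entirely.
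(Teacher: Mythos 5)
Your proposal is correct and follows essentially the same route as the paper: dispose of $a=\grey$ via independence of $z_i,z_{i+1}$, and for $a\in\{\black,\white\}$ split the defining sum into four parts, observe that the two singleton terms share the coefficient $\tilde{\R}((x\vert_{\overline{S}})\,a\,(y\vert_{\overline{S}}))$ so that $F_{xaay}$ is symmetric in $z_i,z_{i+1}$, and then conclude $\widetilde{T}_iF_{xaay}=tF_{xaay}$ from $(1-s_i)F_{xaay}=0$. The only difference is that you spell out explicitly the equality of the two singleton coefficients, which the paper uses implicitly when it writes the combined factor $\bigl((z_i^u-1)+(z_{i+1}^u-1)\bigr)$.
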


\begin{proof} Let $\mu=(\mu_1,\ldots ,\mu_N)=x a a y$.
If $a=\grey$, the equality trivially holds by \eqref{eq:4}. Otherwise, we will show $F_{x a a y}(\z;t)$ is symmetric in variables $z_i,z_{i+1}$. 
Let $u=\mu_i=\mu_{i+1}$. Then
\begin{eqnarray*}
	F_{xaay}(\z;t)&=&(z_i^u-1)(z_{i+1}^u-1)\sum_{S\subseteq V\setminus \{i,i+1\}}\prod_{j\in S}(z_j^{\mu_j}-1) \tilde{\R}((x\vert_{\overline{S}})(y\vert_{\overline{S}}))\\
	&&+((z_i^u-1)+(z_{i+1}^u-1))\sum_{S\subseteq V\setminus \{i,i+1\}}\prod_{j\in S}(z_j^{\mu_j}-1) \tilde{\R}((x\vert_{\overline{S}})a(y\vert_{\overline{S}}))\\
	&& +\sum_{S\subseteq V\setminus \{i,i+1\}}\prod_{j\in S}(z_j^{\mu_j}-1) \tilde{\R}((x\vert_{\overline{S}})aa(y\vert_{\overline{S}})).
\end{eqnarray*}

Thus $F_{xaay}(\z;t)$ is symmetric in $z_i,z_{i+1}$, so $\frac{1-s_i}{z_i-z_{i+1}}F_{xaay}(\z;t)=0$, from which \eqref{eq:aa} follows.
\end{proof}

Finally we are ready to put together
our previous results to prove \Cref{thm:qKZ}.
\begin{proof}
First note that \Cref{thm:4} proves 
\eqref{qkz1}.
It follows from \eqref{eq:phi} that 
for any $\mu$ with $\mu_i=\grey$,
$\tilde{\R}(\mu)$ is independent of $z_i$.
Hence 
 \eqref{eq:z10} 
implies \eqref{left0}
 and \eqref{eq:zN0}  implies
 \eqref{right0}.
Finally we have that \Cref{thm:6} proves \eqref{qkz2}, 
\Cref{thm:1,thm:2} proves \eqref{qkz3}, and 
\Cref{thm:5} proves 
\eqref{qkz4}. 
\end{proof}

Now we can put all these ingredients together to complete the proof of the main theorem.
\begin{proof}[Proof of \Cref{thm:main}]
	By \Cref{thm:qKZ}, the $\{F_{\mu}(\z ; t)\}_{\mu}$ are a qKZ family
	and that $Y_i F_{\ddelta} = y_i(\ddelta) F_{\ddelta}$ for $i=1,\dots,N$.  Together with \Cref{def:nonsymm} 
	and the fact that the leading coefficient of $F_{\ddelta}$ is $\z^{\ddelta}$ (see  \Cref{rem:leadingcoeff}), this implies that $F_{\ddelta}$ equals the 
	nonsymmetric Koornwinder polynomial $E_{\ddelta}$. The theorem now follows from \Cref{prop:K}.
\end{proof}

\begin{proof}[Proof of 
\Cref{Ksymm}]
The proof of Theorem \ref{Ksymm} follows from the \Cref{thm:main}. 

\begin{eqnarray*}
	K_{\lambda}(\z;t)&=&\sum_{\mu\in W_0(\lambda)} F_{\mu}(\z;t)\\
&=&\sum_{\mu\in W_0(\lambda)}\ \sum_{S \subseteq [N]}
\tilde{R}(\mu|_{\overline{S}}) \prod_{i\in S}(z_i^{\mu_i}-1)\\
	&=& \sum_{k=0}^{N-r}\ \sum_{S \in {[N] \choose k}} \ \ 
	\sum_{\nu \in W_0(1^{N-k},0^r)} 
	\tilde{R}(\nu) \cdot 
	\prod_{i\in S} \left( (z_i-1) + (z_i^{-1}-1) \right)\\
	&=&\sum_{k=0}^{N-r} \tilde{Z}_{N-k,r}\cdot e_k(z_1+z_1^{-1}-2,\ldots ,z_N+z_N^{-1}-2).
\end{eqnarray*}
where $W_0$ is the set of signed permutations of the appropriate cardinality,
so e.g. 
the sum over $\nu \in W_0(1^{N-k},0^r)$ indicates 
that we are summing over all signed permutations of the 
word $(1^{N-k},0^r)$.

\end{proof}

\section{Conclusion}\label{sec:conclusion}

In this work we have given a combinatorial formula 
for the Koornwinder polynomials $K_{\lambda}$ associated
to partitions $\lambda=(1^{N-r},0^r)$ by using the
combinatorics 
of the open boundary two-species ASEP.  
This work naturally leads to several
open problems which we hope to pursue in future work.

A first problem
is to find a vertex model analogue of our rhombic staircase tableaux,
corresponding to particle models with open boundaries and their associated type $\tilde{C}$ special functions. 
We note that there has been a great deal of recent work 
connecting particle models on the ring and (type $A$) special functions to 
\emph{vertex models}, see \cite{BW22, ABW21, ANP23}.
In particular, \cite[Section 6]{CGW} gave a lattice model interpretation of their 
matrix product formula for Macdonald polynomials.  Subsequently, 
\cite{ANP23} gave a queue vertex model formula for Macdonald polynomials,
which encodes the multiline queues of Martin \cite{Martin18}, and gives new proofs of 
some formulas for the stationary distribution of the multispecies ASEP on a ring
using the Yang--Baxter equation.  

A second problem is to extend our results to give a combinatorial formula for all Koornwinder polynomials. Towards that end, we now briefly sketch how the analogous problem was solved for type $A$ Macdonald polynomials,
and what are the difficulties in the Koornwinder case. 

Let us refer to the multispecies ASEP on a ring involving particles 
$\{0,1,\dots,m\}$ as the \emph{rank $m$} multispecies ASEP on a ring.
If $m=1$, the multispecies ASEP on a ring is rather trivial --
its stationary distribution is uniform -- but for $m>1$, the stationary distribution
becomes more interesting.  To get some insight on what happens for $m>1$, recall that 
the multispecies ASEP on a ring is connected to 
the type $A$ Macdonald polynomials.  More specifically,
the partition function of the rank $m$ multispecies ASEP on a ring
is the specialization of a related Macdonald polynomial (with largest part $m$)
at $x_1=\cdots=x_N=1$ and $q=1$ \cite{CGW}.  Moreover, when $q=1$, 
Macdonald polynomials admit the following factorization \cite[Chapter VI, Equation (4.14)(vi)]{Macdonald}:
\begin{equation}\label{eq:factorization}
	P_{\lambda}(\x;1,t)=
	\prod_{i\ge 1}P_{(1^{\lambda'_i}, 0^{N-\lambda'_i})}(\x;1,t)=
	\prod_{i\ge 1}P_{(1^{\lambda'_i})}(\x;1,t)=
	\prod_{i\ge 1}e_{{\lambda'_i}}(\x),
\end{equation}

where $e_{\mu}$ is the elementary symmetric polynomial.
This means that the partition function of the rank $m$ mASEP on a ring is just the product
of some elementary symmetric polynomials.  Martin's 
multiline queue formula for the stationary distribution of the mASEP \cite{Martin18}
reflects this structure: each multiline queue is built by stacking 
rows of balls on top of each other, where each row of balls can be thought of as describing
a rank $1$ ASEP.  
Once one 
has the multiline queues with the $t$ statistic, 
adding the parameters $q$ and $x_i$ to 
obtain a formula for Macdonald polynomials 
turns out to be miraculously simple \cite{CMW18}.

Returning to the ASEP with open boundaries, let us
refer to the multispecies ASEP with 
particles $\{0,\pm1,\pm2,\ldots,\pm m\}$ as the \emph{rank $m$} open boundary ASEP. 
Unlike the ASEP on a ring, the rank $1$ open boundary ASEP has a very non-trivial stationary distribution.
Even in the case
of the rank $1$ open ASEP with no second class particles, 
there was a tremendous amount of work on the ASEP 
 \cite{DEHP93, USW04, BE, 
jumping, BCEPR,  CW1} before
a combinatorial formula for the stationary distribution (with all parameters $\alpha, \beta, \gamma, \delta, q$ 
general) was given in \cite{CW11};
this formula was subsequently extended to the rank $1$ case with second class particles
in \cite{CMW17}, using rhombic staircase tableaux.
Similar to the case of the ASEP on a ring, 
the partition function of the rank $m$ open boundary mASEP 
is the specialization of a related Koornwinder polynomial (with largest part $m$)
at $x_1=\cdots=x_N=1$ and $q=1$ \cite{Garbali}.  Moreover, just as Macdonald polynomials factor at $q=1$,
the Koornwinder polynomials admit the  factorization from 
\Cref{cor:Koorn at q=1}:
\begin{equation}\label{eq:Koornwinderfactor}
K_{\lambda}(\z;a,b,c,d;q,t)
	=\prod_{i\ge 1}K_{(1^{\lambda'_i},0^{N-\lambda'_i})}(\z;a,b,c,d;q,t)
	=\prod_{i\ge 1}K_{1^{\lambda'_i}}(\z;a,b,c,d;q,t).
\end{equation}

In this paper we have given  formulas 
(\Cref{thm:main} and \Cref{thm:main2})
in terms of rhombic staircase tableaux 
for the ``rank 1'' Koornwinder polynomials $K_{\lambda}$ 
appearing on the right-hand side of \eqref{eq:Koornwinderfactor}.
Using the factorization  \eqref{eq:Koornwinderfactor}, we also 
gave a combinatorial formula for arbitrary Koornwinder polynomials
at $q=1$ (\Cref{cor:direct}).   
Thus, one might hope to figure out how to insert the parameter $q$ 
into \Cref{cor:direct}, so as to give a formula for general
Koornwinder polynomials.  
In particular, by  analogy with the Macdonald case, 
one might hope that there might be a combinatorial formula 
for arbitrary Koornwinder polynomials where the new combinatorial object  is somehow built 
by stacking  rhombic staircase tableaux on top of each other.  However, as rhombic 
staircase tableaux are already quite complicated, 
and our formulas in \Cref{thm:main} and \Cref{thm:main2} are not that simple,
it is so far unclear to us how to 
define the correct object.

\bibliographystyle{alpha}
\bibliography{bibliography}

\end{document}